\providecommand{\U}[1]{\protect\rule{.1in}{.1in}}
\newcommand{\R}{\mathbb{R}}
\newcommand{\bga}{\bar{\gamma}}
\newcommand{\beps}{\bar{\epsilon}}
\newcommand{\bdel}{\bar{\delta}}
\def\hjb{\textbf{HJB}}
\def\fp{\textbf{FP}}
\def\v{\textbf{V}}
\def\br{\textbf{BR}}
\def\hjbt{\textbf{HJB}_{\theta}}
\def\vt{\textbf{V}_{\theta}}
\def\fpt{\textbf{FP}_{\theta}}
\def\brt{\textbf{BR}_{\theta}}
\DeclareMathOperator*{\argmin}{argmin}
\newtheorem{thm}{Theorem}[section]
\newtheorem{prop}[thm]{Proposition}
\newtheorem{lem}[thm]{Lemma}
\newtheorem{cor}[thm]{Corollary}
\theoremstyle{definition}
\newtheorem{ass}{Assumption}
\newtheorem{ass2}{Assumption}
\newtheorem{defn}[thm]{Definition}
\theoremstyle{remark}
\newtheorem{rem}[thm]{Remark}
\numberwithin{equation}{section}
\definecolor{dgreen}{rgb}{0.00,0.49,0.00}
\definecolor{Brown}{rgb}{0.45,0.0,0.05}
\title{A mesh-independent method for second-order potential mean field games}
\date{\today}
\author{Kang Liu$^{1,2}$ and Laurent Pfeiffer$^1$}
\address{$^1$Universit{\'e} Paris-Saclay, CNRS, CentraleSup{\'e}lec, Inria, Laboratoire des signaux et syst{\`e}mes, 91190, Gif-sur-Yvette, France.}
\address{$^2$Institut Polytechnique de Paris, CNRS, Ecole Polytechnique, CMAP, 91120 Palaiseau, France.}
\email{kang.liu@polytechnique.edu, laurent.pfeiffer@inria.fr}
\begin{document}

\begin{abstract}
This article investigates the convergence of the Generalized Frank-Wolfe (GFW) algorithm for the resolution of potential and convex second-order mean field games. More specifically, the impact of the discretization of the mean-field-game system on the effectiveness of the GFW algorithm is analyzed. The article focuses on the theta-scheme introduced by the authors in a previous study. A sublinear and a linear rate of convergence are obtained, for two different choices of stepsizes. These rates have the mesh-independence property: the underlying convergence constants are independent of the discretization parameters.
\end{abstract}

\maketitle

\section{Introduction}

\subsection{Context and main contributions}

This article is concerned with the numerical resolution of second-order mean field games (MFGs). These models describe the asymptotic behavior of Nash equilibria in stochastic differential games, as the number of players goes to infinity. They were introduced independently in \cite{lasry2007mean} and \cite{huang2006large} and have applications in various domains, such as economics, biology, finance and social networks, see \cite {djehiche2017mean}. 
In this article, we consider the following standard second-order MFG on the space $Q \coloneqq [0,1]\times \mathbb{T}^d$,
\begin{equation}\label{eq:mfg}\tag{MFG}
\left\{\begin{array}{lll}
\text{(i)} & -\partial_{t} u-\sigma \Delta u+H^c\left(t, x, \nabla u(t, x)\right) = f^c(t,x , m(t)) & (t,x) \in Q, \\
\text{(ii)} & v( t,x )=-H^c_{p}\left( t, x, \nabla u(t, x)\right) & (t, x) \in Q, \\
\text{(iii)} & \partial_{t} m-\sigma \Delta m+\operatorname{div}(v m)=0 & (t, x) \in Q, \\
\text{(iv)} \ & m( 0,x)=m^c_{0}(x), \quad u(1,x)=g^c(x) & x \in \mathbb{T}^d,
\end{array}\right.
\end{equation}
where the Hamiltonian $H^c$ is related to the Fenchel conjugate of a running cost $\ell^c \colon Q\times \mathbb{R}^d \rightarrow \mathbb{R}$:
\begin{equation*}
H^c(t,x,p) \coloneqq \sup_{v\in \mathbb{R}^d} \ \langle -p, v\rangle - \ell^c(t,x,v).
\end{equation*}
The existence and uniqueness of the classical solution of \eqref{eq:mfg} is proved in \cite{lasry2007mean} under assumptions on the coupling function $f^c$.

Several discretization schemes have been proposed and analyzed for the resolution of \eqref{eq:mfg}. They consist of a backward discrete Hamilton-Jacobi-Bellman (HJB) equation and a forward discrete Fokker-Planck (FP) equation: They preserve the nature of the problem as a coupled system of two equations.
An implicit finite difference scheme has been introduced in \cite{achdou2010mean} and convergence results for this scheme have been obtained in \cite{achdou2013mean} and \cite{achdou2016}. 
Other schemes, based on semi-Lagrangian discretizations have been investigated in \cite{carlini2014,carlini2015,hadikhanloo2019finite} for first-order and (possibly) degenerate second-order MFGs.
This work will focus on a scheme called theta-scheme, recently introduced by the authors in \cite{BLP2022}. In short, this scheme involves a Crank-Nicolson discretization of the diffusion term and an explicit discretization of the first-order non-linear term.

The resolution of the discretized coupled system is in general a difficult task. We restrict our attention to the case of potential MFGs, for which optimization methods can be leveraged.
The system \eqref{eq:mfg} is said to be potential (or variational) if there exists a function $F^c \colon [0,1]\times \mathcal{D} \to \mathbb{R}$ such that for any $t\in [0,1]$ and $m_1,m_2 \in \mathcal{D}$ (see the definition of $\mathcal{D}$ in \eqref{eq:def_D}),
\begin{equation}\label{eq:potential}
    F^c(t,m_1) - F^c(t,m_2) = \int_{0}^1 \int_{x\in \mathbb{T}^d} f^c(t,x, m_1 + s(m_2-m_1)) (m_2(x) - m_1(x))dx ds.
\end{equation}
We further restrict our attention to the case of a convex potential function $F^c$.
In the presence of such $F^c$, the system \eqref{eq:mfg} can be interpreted as the first-order optimality condition of an optimal control problem driven by the FP equation,
\begin{equation}\label{pb:OC}\tag{OC}
\begin{cases}
\ \underset{(m,v)}{\inf}  \
J^c(m,v) \coloneqq {\displaystyle \int_{Q} }\ell^c(t,x,v) m(t,x) dt dx + {\displaystyle \int_{0}^1} F^c(t,m(t)) dt+ {\displaystyle \int_{\mathbb{T}^d} } g^c(x)m(T,x) dx, \\[1.2em]
\ \text{such that } \begin{cases}
\ \partial_{t} m-\sigma \Delta m+\operatorname{div}(v m)=0 ,   \quad & \forall (t, x) \in Q, \\
\ m( 0,x)=m^c_{0}(x), &\forall x\in \mathbb{T}^d.
\end{cases} 
\end{cases}
\end{equation} 
Problem \eqref{pb:OC} is equivalent to a convex optimal control problem, obtained through the classical Benamou-Brenier transform \cite{benamou2017}. 
Then, some numerical algorithms can be applied to find a solution of \eqref{pb:OC}, such as ADMM \cite{benamou2015augmented,andreev2017}, the Chambolle-Pock algorithm \cite{achdoumean}, the fictitious play \cite{hadikhanloo2019finite} and the generalized Frank-Wolfe (GFW) algorithm \cite{lavigne2021}.
Some articles propose to discretize the optimal control problem \eqref{pb:OC}, see for example \cite{lachapelle2010computation,andreev2017}.
In this context, it is very desirable that the potential structure of the continuous MFG is preserved at the level of the discretized coupled system, so that one can apply in a direct fashion suitable optimization methods to the discrete system.
This is in particular the case for the implicit scheme proposed in \cite{achdou2013mean} and solved in 
\cite{achdoumean} with the Chambolle-Pock algorithm.
As we establish in this article, the theta-scheme of \cite{BLP2022} also preserves the potential structure of the MFG system.

We focus in this article on the resolution of the discrete MFG system with the Generalized Frank-Wolfe (GFW) algorithm see \cite{bredies2009}. This algorithm is an iterative method, consisting in solving at each iteration a partially linearized version of the potential problem \eqref{pb:OC}.
The linearized problem to be solved is equivalent to a stochastic optimal control problem that can be solved by dynamic programming. As we will explain more in detail, this allows to interpret the GFW method as a best-response procedure. For a specific choice of stepsize, it coincides with the fictitious play method of \cite{cardaliaguet2017learning}. Others works have investigated the fictitious play method for MFGs: \cite{perrin2020fictitious} proves the convergence of the continuous method, in a discrete setting with common noise; \cite{hadikhanloo2019finite} proves a general result for fully discrete MFGs, which can be applied to discretized first-order MFGs. The article \cite{geist2022concave} shows the connexion between fictitious play and the Frank-Wolfe algorithm for potential discrete MFGs.

The general objective of the article is to show that the performance of the GFW algorithm is not impacted by a refinement of the discretization grid.
The main results of our article are two mesh-independence properties for the resolution of \eqref{eq:mfg} with the theta-scheme and the GFW algorithm. The terminology mesh-independence was coined in the article \cite{allgower1986}. It is said that an algorithm satisfy a mesh-independence property when approximately the same number of iterations is required to satisfy a stopping criterion, when comparing an infinite-dimensional problem and its discrete counterpart. In a more precise fashion, we will say that the GFW algorithm has a mesh-independent sublinear rate of convergence if there exists a constant $C>0$, independent of the discretization parameters, such that
\begin{equation*}
J_h(m_h^k,v_h^k) - J_h^* \leq \frac{C}{k}, \quad \forall k \geq 1.
\end{equation*}
In the above estimate, $J_h$ denotes the discretized counterpart of $J$, $J_h^*$ denotes the value of the discretized optimal control problem, and $(m_h^k,v_h^k)$ denotes the candidate to optimality obtained at iteration $k$.
Similarly, we will say that the GFW algorithm has a mesh-independent linear rate of convergence if there exist two constants $C>0$ and $\delta \in (0,1)$ such that
\begin{equation*}
J_h(m_h^k,v_h^k) - J_h^* \leq  C \delta^k, \quad \forall k \geq 1.
\end{equation*}

We establish that the GFW algorithm has a mesh-independent sublinear (resp.\@ a linear) rate for two different choices of stepsize. Our analysis is close to the one performed in \cite{lavigne2021}, in which the sublinear and the linear convergence of the GFW algorithm is demonstrated for the continuous model and for the same choices of stepsizes as in the present study. While the sublinear convergence of the GFW method (in a general setting) is classical, the linear rate of convergence relies on recent techniques from \cite{kunisch2021}.
To the best of our knowledge, in the context of mean field games, the mesh-independence property has never been established so far for any other method. Though it seems a natural property, it may not hold in general. In particular, it might not hold for primal-dual methods, whose application relies on a saddle-point formulation of the convex counterpart of \eqref{pb:OC} of the form, in which the Fokker-Planck equation is ``dualized". This saddle-point formulation involves a linear operator, encoding the (discrete) Fokker-Planck equation (see for example \cite[Sec.\@ 3.2]{achdoumean}). As the discretization parameters decrease, the operator norm of these operators (for the Euclidean norm) increases, which has an impact on the convergence properties of methods such as the Chambolle-Pock algorithm. In contrast, the discrete Fokker-Planck equation remains satisfied at each iteration of the GFW equation.

This article is organized as follows. In Section \ref{sec:dmfg}, we introduce some preliminary results and notations. In Section \ref{sec:FW}, we introduce a general class of potential discrete MFGs, containing the theta-scheme. We establish the sublinear and the linear convergence of the GFW method in this discrete setting. We give explicit formulas for the convergence constants. These constants essentially depend on the Lipschitz-modulus of the  coupling function of the MFG and on two bounds, for different norms, of the solution of the discretized Fokker-Planck equation, denoted $C_1$ and $C_2$. We recall the theta-scheme in Section \ref{sec:mesh-indep}, we show that it preserves the potential structure of the MFG, and we prove that the GFW algorithm has a mesh-independent sublinear and linear rates of convergence. The technical analysis relies on precise estimates of the constants $C_1$ and $C_2$, obtained thanks to a general energy estimate and an $L^\infty$-estimate for the discrete Fokker-Planck equation.

\subsection{Notation}

We discretize the interval $[0,1]$ with a time step $\Delta t= 1/T$, where $T\in \mathbb{N}_{+}$. 
The time set is denoted by $\mathcal{T}$ ($\tilde{\mathcal{T}}$ if the final time step $T$ is included). 
Given a finite subset $S$ of $\R^d$, we denote by  $\mathbb{R}(S)$ (resp.\@ $\mathbb{R}^d(S)$) the set of functions from $S$ to $\R$ (resp.\@ $\R^d$). We also denote by $\mathcal{P}(S)$ the of probability measures over $S$.
We call curve of probability measures any function $m\colon  \mathcal{\tilde{T}}\times S \rightarrow \mathbb{R}$ such that $m(t,\cdot) \in \mathcal{P}(S)$, for any $t \in \mathcal{T}$. The set of probability curves is denoted by $\mathcal{P}(\tilde{\mathcal{T}},S)$. In mathematical terms,
\begin{align*}
& \mathcal{T} =\{0,1,\ldots,T-1\}, \qquad  \mathcal{\tilde{T}} = \{0,1,\ldots, T\}; \\[0.5em]
& \mathbb{R}(S) = \{ m \colon S \rightarrow \mathbb{R} \}, \qquad \mathbb{R}^d(S) = \{ m \colon S \rightarrow \mathbb{R}^d \}; \\[0.5em]
&\mathcal{P}(S) = \Big\{m\in \mathbb{R}(S) \, \mid \, \forall x\in S, \, m(x)\geq 0, \, \,\sum_{y\in S} m(y) =1 \Big\}; \\
& \mathcal{P}(\tilde{\mathcal{T}},S)  = \left\{  m \in \mathbb{R}( \tilde{\mathcal{T}}\times S) \, \mid     \, \forall t \in \tilde{\mathcal{T}}, \, m(t,\cdot) \in \mathcal{P}(S)  \right \}.
\end{align*}
We denote by $\|\cdot\|$ and $\langle \cdot ,\cdot
\rangle$ the Euclidean norm and the scalar product in $\mathbb{R}^d$.
Let $S_1$ and $S_2$ be two finite sets. Let $\mu\in \mathbb{R}^d(S_1\times S_2)$. For any $x \in S_1$ and for any $p_1$ and $p_2 \in [1,\infty]$, we denote by $\|\mu(x,\cdot)\|_{p_2}$ the $L^{p_2}$-norm of the function $y \mapsto \mu(x,y)$, defined as follows:
\begin{equation*}
       \|\mu(x,\cdot)\|_{p_2}  = \begin{cases}
       \, \Big(\sum_{y\in S_2} \|\mu(x,y)\|^{p_2} \Big)^{1/{p_2}} ,\quad &\text{if } p_2 \in [1,\infty),
       \\
        \, \max_{y\in S_2} \|\mu(x,y)\|, \quad &\text{if } p_2 = \infty.
       \end{cases}
\end{equation*}
We next define
\begin{equation*}
\|\mu\|_{p_1,p_2} =  \begin{cases}
\, \Big(\sum_{x\in S_1} \|\mu(x,\cdot)\|_{p_2}^{p_1} \Big)^{1/p_1}, \quad &\text{if } p_1\in [1,\infty),\\
\, \max_{x\in S_1} \|\mu(x,\cdot)\|_{p_2}, & \text{if } p_1=\infty.
\end{cases}
\end{equation*}

\begin{lem}[H\"{o}lder's inequality]
Let $S_1$ and $S_2$ be two finite sets.
Let $\mu$ and $\nu  \in \mathbb{R}^n(S_1\times S_2)$ and let $p_1$ and $p_2 \in [1,\infty]$. It holds that
\begin{equation*}
\sum_{x_1\in S_1} \sum_{x_2\in S_2}\Big|\langle \mu(x_1,x_2), \nu(x_1,x_2) \rangle \Big| \leq \|\mu\|_{p_1,p_2} \|\nu\|_{p_1^{*},p_2^{*}},
\end{equation*}
where $1/p_i + 1/ p_i^{*} = 1$, for $i=1,2$.
\end{lem}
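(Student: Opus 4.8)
The plan is to reduce the mixed-norm estimate to two successive applications of the classical scalar Hölder inequality on finite sets, preceded by a pointwise Cauchy--Schwarz step. First I would bound the inner product pointwise by the Cauchy--Schwarz inequality in $\mathbb{R}^n$: for every $(x_1,x_2) \in S_1 \times S_2$,
\[
\big|\langle \mu(x_1,x_2), \nu(x_1,x_2)\rangle\big| \leq \|\mu(x_1,x_2)\|\,\|\nu(x_1,x_2)\|.
\]
This replaces the vector-valued data by the nonnegative scalars $a(x_1,x_2) \coloneqq \|\mu(x_1,x_2)\|$ and $b(x_1,x_2) \coloneqq \|\nu(x_1,x_2)\|$, so that it suffices to establish the inequality for these scalar functions with the same exponents.

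Next, fixing $x_1 \in S_1$, I would apply the scalar Hölder inequality in the variable $x_2$ with conjugate exponents $p_2, p_2^*$, which gives
\[
\sum_{x_2 \in S_2} a(x_1,x_2)\, b(x_1,x_2) \leq \Big(\sum_{x_2 \in S_2} a(x_1,x_2)^{p_2}\Big)^{1/p_2} \Big(\sum_{x_2 \in S_2} b(x_1,x_2)^{p_2^*}\Big)^{1/p_2^*} = \|\mu(x_1,\cdot)\|_{p_2}\,\|\nu(x_1,\cdot)\|_{p_2^*}.
\]
Summing over $x_1$ and applying the scalar Hölder inequality once more, this time in $x_1$ with exponents $p_1, p_1^*$, yields
\[
\sum_{x_1 \in S_1} \|\mu(x_1,\cdot)\|_{p_2}\,\|\nu(x_1,\cdot)\|_{p_2^*} \leq \Big(\sum_{x_1 \in S_1}\|\mu(x_1,\cdot)\|_{p_2}^{p_1}\Big)^{1/p_1}\Big(\sum_{x_1 \in S_1}\|\nu(x_1,\cdot)\|_{p_2^*}^{p_1^*}\Big)^{1/p_1^*} = \|\mu\|_{p_1,p_2}\,\|\nu\|_{p_1^*,p_2^*}.
\]
Chaining these three displays gives the claimed bound.

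The only genuine care is in the endpoint cases $p_i \in \{1,\infty\}$, where the sums defining the norms are replaced by maxima according to the definitions of $\|\cdot\|_{p_2}$ and $\|\cdot\|_{p_1,p_2}$. In each such instance the corresponding application of scalar Hölder degenerates to the elementary bound $\sum_k a_k b_k \leq (\max_k a_k)\big(\sum_k b_k\big)$, pairing the exponent $\infty$ with $1$, so no real difficulty arises; I would simply verify the two limiting instances separately. Since all sets are finite, no convergence or measurability issues intervene, and the classical scalar Hölder inequality may be taken as known. I do not anticipate a substantive obstacle: the content is purely the Fubini-type iteration of Hölder, and the main point is to carry the exponent pairs $(p_2,p_2^*)$ and $(p_1,p_1^*)$ through the two stages in the correct order, namely the inner sum over $S_2$ first and the outer sum over $S_1$ second.
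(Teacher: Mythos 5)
Your proof is correct, and it is the standard argument: pointwise Cauchy--Schwarz in $\mathbb{R}^n$ followed by two iterated applications of scalar H\"older, first in $x_2$ with exponents $(p_2,p_2^*)$ and then in $x_1$ with exponents $(p_1,p_1^*)$, with the endpoint cases degenerating as you describe. The paper states this lemma without proof, treating it as a standard fact, so there is nothing to compare against; your write-up supplies exactly the argument the authors implicitly rely on.
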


\begin{defn}[Nemytskii operators]
Let $\zeta$ be a function from $\mathcal{X}\times \mathcal{Y}$ to $\mathcal{Z}$ and let $u$ be a function from $\mathcal{X}$ to $\mathcal{Y}$. Then the Nemytskii operator is the mapping $\zeta[u]$ from $\mathcal{X}$ to $\mathcal{Z}$ defined by
\begin{equation*}
\zeta[u](x) = \zeta(x,u(x)), \quad \forall x \in \mathcal{X}.
\end{equation*}
\end{defn}

\section{Potential discrete mean field games} \label{sec:dmfg}

In this section we introduce a general class of discrete MFGs containing the $\theta$-scheme of \cite{BLP2022}. We provide a first potential formulation of the discrete MFGs and show their equivalence with a convex optimization problem, using the classical Benamou-Brenier transformation. The analysis being rather standard (see for example \cite{bonnans2023discrete}), we mostly give succinct proofs.

\subsection{Problem formulation}\label{subsec:DMFG_formulation}

We fix $T \in \mathbb{N}_+$ and a finite subset $S$ of $\R^d$.
For the description of the MFG model, we fix a running cost $\ell$, a coupling cost $f$, an initial condition $m_0$, and a terminal cost $g$, where
\begin{align*}
\ell \colon \mathcal{T}\times S \times \mathbb{R}^d \rightarrow \mathbb{R} \cup \{ \infty \},
\qquad
f \colon \mathcal{T}\times S \times \mathbb{R}(S) \rightarrow \mathbb{R},
\qquad
m_0 \in \mathcal{P}(S),
\qquad
g \in \mathbb{R}(S).
\end{align*}
For a given $m \in \mathcal{P}(\tilde{\mathcal{T}}, S)$, we denote by $\bar{\ell}_m$ the map defined by
\begin{equation*}
\bar{\ell}_{m} \colon (t,x,\omega) \in \mathcal{T} \times S \mapsto
\ell(t, x, \omega) + f(t, x,m(t)),
\end{equation*}
where $m(t)= (m(t,x))_{x \in S}$.
We require the following assumptions for $\ell$, $m$, $f$, and $m_0$.

\begin{ass2} \label{ass1}
\begin{enumerate}
\item \emph{Bounded domain.} 
For all $(t,x) \in \mathcal{T} \times S$, $\ell(t,x,\cdot)$ is lower semi-continuous with a non-empty domain. There exists $D>0$ such that for any $t\in \mathcal{T}$, for any $x\in S$, and for any $\|v\| > D$, we have $\ell(t,x,v) = +\infty$. 
\item \emph{Regularity.} 
There exists $L_{f}$ such that for any $(t,x)$ and for any $m_1$, and $m_2$ in $\mathcal{P}(S)$, we have
\begin{equation*}
    |f(t,x,m_1) - f(t,x,m_2)| \leq L_f \|m_1 - m_2\|_2.
\end{equation*}
\item \emph{Strong convexity.}
There exists $\alpha > 0$ such that for any $t \in \mathcal{T}$ and for any $x \in S$, the function $\ell(t,x,\cdot)$ is $\alpha$-strongly convex, i.e.,
\begin{equation*}
\ell(t,x,v_2) \geq \ell(t,x,v_1) + \langle p, v_2-v_1\rangle + \frac{\alpha}{2}\|v_2-v_1\|^2,
\end{equation*}
    for all $v_1$ and $v_2$ in $\R^n$ and for all $p \in  \partial_p \ell(t,x,v_1)$.
\item \emph{Monotonicity.} For any $t \in \mathcal{T}$, for any $m_1$ and $m_2$ in $\mathcal{P}(S)$,
\begin{equation*}
\sum_{x\in S} \big( f(t,x,m_1)-f(t,x,m_2) \big) (m_1(x)-m_2(x) ) \geq 0.
\end{equation*}
\end{enumerate}
\end{ass2}

We now fix two elements $\pi_0 \in \mathbb{R}(\mathcal{T}\times S^2)$ and $\pi_1 \in \mathbb{R}^d(\mathcal{T}\times S^2)$ and define the map $\pi$ by
\begin{equation*}
\pi \colon (t,x,y,\omega) \in \mathcal{T} \times S \times S \times \R^d
\mapsto \pi_0(t,x,y) + \Delta t \langle \pi_1(t,x,y), \omega \rangle.
\end{equation*}
The map $\pi$ describes the probability of an agent located at time $t$ in state $x$, using the control $\omega$, to reach state $y$ at time $t+1$. Our analysis will exploit the fact that $\pi$ is affine with respect to $\omega$. Recalling the constant $D$ introduced in Assumption \ref{ass1}, we consider the following assumption.

\begin{ass2}\label{ass2}
The elements $\pi_0$ and $\pi_1$ satisfy the following:
\begin{equation}\label{cond1}
\begin{cases}
\begin{array}{ll}
      \pi_0(t,x,\cdot) \in \mathcal{P}(S),  \quad  &\forall (t,x) \in \mathcal{T}\times S, 
      \\[0.5em]
      \sum_{y\in S}\pi_1 (t,x,y) = 0 , \quad  &\forall (t,x) \in \mathcal{T}\times S, \\[0.5em]
      \pi_0(t,x,y) \geq  \Delta t D \| \pi_1(t,x,y)\| , \qquad &\forall (t,x,y) \in \mathcal{T}\times S\times S.
\end{array}
\end{cases}
\end{equation}
\end{ass2}

An immediate consequence of Assumption \ref{ass2} is the following: For all $(t,x) \in \mathcal{T} \times S$, for all $\omega \in \R^d$, if $\| \omega \| \leq D$, then $\pi(t,x,\cdot,\omega) \in \mathcal{P}(S)$.

\begin{ass2} \label{ass3}
There exists a function $F \colon \mathcal{T}\times \mathcal{P}(S) \to \mathbb{R}^d$ such that for any $t\in \mathcal{T}$ and for any  $m_1$ and $m_2$ in $\mathcal{P}(S)$, it holds
\begin{equation*}
    F(t,m_1) - F(t,m_2) = \int_{0}^1\sum_{x\in S} f(t,x, m_1 + s(m_2-m_1)) (m_2(x) - m_1(x)) ds.
\end{equation*}
\end{ass2}

We have the following convexity property for the potential function $F$.
 
\begin{lem} \label{lm:F_bis}
For any $t\in \mathcal{T}$ and for any $m_1$ and $m_2$ in $\mathcal{P}(S)$, it holds that
\begin{align}
     F(t,m_2) & \geq F(t,m_1) + \sum_{x\in S} f(t,x, m_1 ) (m_2(x) - m_1(x)). \label{eq:linear}
\end{align}
\end{lem}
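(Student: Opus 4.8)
The plan is to combine the integral (fundamental-theorem-of-calculus) representation of the potential provided by Assumption \ref{ass3} with the monotonicity of the coupling $f$ furnished by Assumption \ref{ass1}(4). Fix $t \in \mathcal{T}$ and $m_1, m_2 \in \mathcal{P}(S)$, and for $s \in [0,1]$ set $m_s \coloneqq m_1 + s(m_2 - m_1)$. Since $\mathcal{P}(S)$ is convex, $m_s \in \mathcal{P}(S)$ for every $s$, so all expressions below are well defined and the monotonicity assumption may legitimately be invoked for the pair $(m_s, m_1)$.

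First I would use the potential relation of Assumption \ref{ass3} to write the increment of $F$ along the segment $s \mapsto m_s$ joining $m_1$ to $m_2$ as
\[
F(t,m_2) - F(t,m_1) = \int_0^1 \sum_{x\in S} f(t,x, m_s)\,(m_2(x) - m_1(x)) \, ds.
\]
Subtracting the linear term appearing in \eqref{eq:linear}, and observing that $\sum_{x\in S} f(t,x,m_1)(m_2(x)-m_1(x))$ does not depend on $s$ and therefore equals its own integral over $[0,1]$, I reduce the claim to showing
\[
\int_0^1 \sum_{x\in S} \big( f(t,x,m_s) - f(t,x,m_1) \big)(m_2(x) - m_1(x)) \, ds \geq 0.
\]

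The decisive step is to recognize the integrand as a monotonicity bracket. For $s \in (0,1]$ one has $m_s - m_1 = s(m_2 - m_1)$, hence $m_2 - m_1 = s^{-1}(m_s - m_1)$, and consequently
\[
\sum_{x\in S} \big( f(t,x,m_s) - f(t,x,m_1) \big)(m_2(x) - m_1(x)) = \frac{1}{s}\sum_{x\in S} \big( f(t,x,m_s) - f(t,x,m_1) \big)(m_s(x) - m_1(x)).
\]
The right-hand side is nonnegative by the monotonicity Assumption \ref{ass1}(4) applied to $m_s, m_1 \in \mathcal{P}(S)$, and $s^{-1}>0$; thus the original integrand is nonnegative for every $s \in (0,1]$ and vanishes at $s=0$. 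Integrating a nonnegative function gives the reduced inequality, and hence \eqref{eq:linear}.

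I expect the only delicate point to be precisely this rescaling, which converts the ``tangent-line'' remainder into the monotonicity bracket $\sum_{x\in S}(f(t,x,m_s) - f(t,x,m_1))(m_s(x) - m_1(x))$, together with the elementary bookkeeping ensuring that $m_s$ remains in $\mathcal{P}(S)$ so that Assumption \ref{ass1}(4) applies; everything else is routine. This is in essence the classical argument showing that a monotone map is the gradient of a convex function, specialized here to the finite-dimensional simplex $\mathcal{P}(S)$.
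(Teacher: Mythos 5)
Your proof is correct and is exactly the argument the paper intends: combine the integral representation of $F$ from Assumption \ref{ass3} with the monotonicity of $f$ from Assumption \ref{ass1}(4), using $m_s-m_1=s(m_2-m_1)$ to turn the tangent-line remainder into a nonnegative monotonicity bracket. (Note that you silently reversed the sign in Assumption \ref{ass3}, which as printed has $F(t,m_1)-F(t,m_2)$ on the left; your reading is the one consistent with Lemma \ref{lm:F_bis} and Lemma \ref{lm:F}, so the printed sign is evidently a typo.)
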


\begin{proof}
Inequality \eqref{eq:linear} follows from the definition of $F$ and the monotonicity in Assumption \ref{ass1}.
\end{proof}

Until the end of Section \ref{sec:FW}, we assume that Assumptions \ref{ass1}, \ref{ass2}, and \ref{ass3} are satisfied. Following \cite[Sec.\@ 2.3]{BLP2022}, we consider the following discrete mean field game system, involving the variables $u \in \mathbb{R}(\tilde{\mathcal{T}} \times S)$, $v \in \mathbb{R}^d(\mathcal{T}\times S)$, and $m \in \mathcal{P}(\tilde{\mathcal{T}}, S)$:
\begin{equation}\label{eq:dmfg}\tag{DMFG}
\left\{\begin{array}{cll}
\mathrm{(i)} \ & u =  \hjb (m) ,\\[0.3em]
\mathrm{(ii)} \  & v = \v (u),\\[0.3em]
\mathrm{(iii)} \ & m = \fp (v),
\end{array}\right.
\end{equation}
where the Hamilton-Jacobi-Bellman mapping $\hjb$, the optimal control mapping $\v$, and the Fokker-Planck mapping $\fp$ are defined as follows:
\begin{itemize}
\item Given $m \in \mathcal{P}(\tilde{\mathcal{T}},S)$, $u= \hjb(m) \in \R(\tilde{\mathcal{T}}\times S)$ is the solution to
\begin{equation}\label{eq:hjbmap}
\begin{cases}
\     u(t,x) = \underset{\omega\in\mathbb{R}^d}{\inf} \ \bar{\ell}_{m}(t,x,\omega) \Delta t + \sum_{y\in S} \pi(t,x,y,\omega )u(t+1, y) ,\quad &\forall(t,x)\in \mathcal{T}\times S, \\[0.8em]
\      u(T,x) = g(x) , &\forall x\in S.
\end{cases} 
\end{equation}
\item Given $u \in \R(\mathcal{T} \times S)$, $v=\v(u) \in \mathbb{R}^d(\mathcal{T}\times S)$ is defined by
\begin{equation}\label{eq:vmap}
     v(t,x) = \argmin_{\omega \in \mathbb{R}^d}\ \ell(t,x,\omega)\Delta t + \sum_{y\in S} \pi(t,x,y,\omega )u(t+1, y) , \quad \forall(t,x)\in \mathcal{T}\times S.
\end{equation}
\item Given $v \in  \mathbb{R}^d(\mathcal{T}\times S) $, $m= \fp(v) \in \mathbb{R}(\tilde{\mathcal{T}}\times S)$ is defined as the solution to
 \begin{equation}
    \begin{cases}\label{eq:fpmap}
\       m(t+1,y) = \sum_{x\in S} \pi(t,x,y,v(t,x)) m(t,x),  & \forall (t,y) \in \mathcal{T}\times S, \\[0.8em]
\       m(0,x)  = m_0(x), & \forall x \in S.
\end{cases}
\end{equation}
\end{itemize}

\begin{lem}[Continuity of \textnormal{\hjb}] \label{lm:hjb_stab}
For any $m_1$ and $m_2$ in $\mathcal{P}(\tilde{\mathcal{T}},S)$, we have
\textnormal{
\begin{equation}\label{eq:hjbstable}
    \|\hjb(m_1) - \hjb(m_2)\|_{\infty,\infty} \leq L_f \|m_1 - m_2\|_{\infty,2}.
\end{equation}}
\end{lem}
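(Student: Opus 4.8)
The plan is to argue by backward induction on the time index, controlling the quantity
\[
e_t \coloneqq \max_{x \in S} \big| u_1(t,x) - u_2(t,x) \big|, \qquad u_i \coloneqq \hjb(m_i),
\]
and showing that $e_t \leq L_f \|m_1 - m_2\|_{\infty,2}$ for every $t \in \tilde{\mathcal{T}}$, which is exactly the claimed bound on $\|u_1 - u_2\|_{\infty,\infty}$. The base case is immediate: the terminal condition in \eqref{eq:hjbmap} gives $u_1(T,\cdot) = g = u_2(T,\cdot)$, so $e_T = 0$. The engine of the induction is the elementary fact that, for two functions defined on a common set, the difference of their infima is bounded by the supremum of their pointwise difference; I would apply this to
\[
A(\omega) = \bar{\ell}_{m_1}(t,x,\omega)\Delta t + \sum_{y \in S} \pi(t,x,y,\omega)\,u_1(t+1,y),
\qquad
B(\omega) = \bar{\ell}_{m_2}(t,x,\omega)\Delta t + \sum_{y \in S} \pi(t,x,y,\omega)\,u_2(t+1,y).
\]

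The decisive point is that $A$ and $B$ share the same effective domain. By the bounded-domain part of Assumption \ref{ass1}, $\ell(t,x,\cdot) = +\infty$ outside $\{\|\omega\| \leq D\}$, and since $\bar{\ell}_{m_i}$ differs from $\ell$ only by the $\omega$-independent term $f(t,x,m_i(t))$, both infima are effectively taken over $\{\|\omega\| \leq D\}$. On this set I would split the pointwise difference into two contributions. First, $\bar{\ell}_{m_1} - \bar{\ell}_{m_2} = f(t,x,m_1(t)) - f(t,x,m_2(t))$ (the $\ell$-terms cancel), so the regularity assumption \ref{ass1}(2) gives $|f(t,x,m_1(t)) - f(t,x,m_2(t))| \leq L_f \|m_1(t) - m_2(t)\|_2 \leq L_f \|m_1 - m_2\|_{\infty,2}$. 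Second, the immediate consequence of Assumption \ref{ass2} noted just after its statement tells us that for $\|\omega\| \leq D$ the weights $\pi(t,x,\cdot,\omega)$ form a probability measure on $S$; hence the $\pi$-weighted difference is a convex combination and
\[
\Big| \sum_{y \in S} \pi(t,x,y,\omega)\big(u_1(t+1,y) - u_2(t+1,y)\big) \Big| \leq \max_{y \in S} |u_1(t+1,y) - u_2(t+1,y)| = e_{t+1}.
\]

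Combining these bounds yields $|A(\omega) - B(\omega)| \leq L_f \Delta t\,\|m_1 - m_2\|_{\infty,2} + e_{t+1}$ uniformly over the common domain, whence, taking infima and then the maximum over $x$, the recursion $e_t \leq L_f \Delta t\,\|m_1 - m_2\|_{\infty,2} + e_{t+1}$. Iterating from $t=T$ down, with $e_T = 0$, gives $e_t \leq (T-t)\Delta t\, L_f \|m_1 - m_2\|_{\infty,2}$, and since $\Delta t = 1/T$ we have $(T-t)\Delta t \leq 1$, so $e_t \leq L_f \|m_1 - m_2\|_{\infty,2}$ for all $t$, which is the assertion. I expect the only delicate step to be the justification that the two infimum problems have the same effective domain and that $\pi$ reduces to a genuine probability measure there; this is precisely where Assumption \ref{ass1}(1) (compact support $\|\omega\|\le D$) and Assumption \ref{ass2} must be invoked in tandem, and it is what makes the convex-averaging bound on the $\pi$-term legitimate. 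The remaining steps are routine.
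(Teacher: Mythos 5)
Your argument is correct, and it is a faithful, self-contained reconstruction of the standard stability estimate that the paper itself does not reprove but simply cites from \cite[Eq.\@ A.6]{BLP2022}: backward induction on $e_t$, cancellation of the $\ell$-terms over the common effective domain, the convex-averaging bound via $\pi(t,x,\cdot,\omega)\in\mathcal{P}(S)$ for $\|\omega\|\le D$, and the telescoping $(T-t)\Delta t\le 1$. No gaps; this is essentially the same approach as the referenced proof.
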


\begin{proof}
    See \cite[Eq.\@ A.6]{BLP2022}.
\end{proof}

\begin{lem}[Continuity of \textnormal{\fp}] \label{lemma:cont_FP}
Let $v_1$ and $v_2$ in $\R^d(\mathcal{T} \times S)$ be such that $\| v_1 \|_{\infty,\infty} \leq D$ and $\| v_2 \|_{\infty,\infty} \leq D$. There exists a constant $C$, independent of $v_1$ and $v_2$, such that
\begin{equation} \label{eq:energy2_init}
    \big\| \textnormal{\fp}(v_1) - \textnormal{\fp}(v_2) \big\|^2_{\infty,2} \leq C \Delta t \sum_{t\in \mathcal{T}}\sum_{x\in S} \|(v_1 - v_2)m_1(t,x)\|^2. 
\end{equation}
As an immediate consequence, the mapping $\textnormal{\fp}$ is continuous.
\end{lem}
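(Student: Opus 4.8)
The plan is to track the difference $\delta m := \fp(v_1) - \fp(v_2) = m_1 - m_2$ through the Fokker--Planck recursion \eqref{eq:fpmap} and to close an energy estimate via a discrete Gronwall argument. Writing $\delta v := v_1 - v_2$ and subtracting the two recursions, I would first add and subtract the cross term $\pi(t,x,y,v_2(t,x))\,m_1(t,x)$ to obtain, for $(t,y)\in\mathcal{T}\times S$,
\[
\delta m(t+1,y) = \sum_{x\in S}\big[\pi(t,x,y,v_1(t,x)) - \pi(t,x,y,v_2(t,x))\big]m_1(t,x) + \sum_{x\in S}\pi(t,x,y,v_2(t,x))\,\delta m(t,x).
\]
The crucial point is that $\pi$ is affine in its last argument, so the first bracket equals $\Delta t\,\langle \pi_1(t,x,y),\, \delta v(t,x)\rangle$. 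Denoting the two sums by $a(t+1,\cdot)$ (a source term) and $b(t+1,\cdot)$ (a propagation term), this gives $\delta m(t+1,\cdot) = a(t+1,\cdot) + b(t+1,\cdot)$ with $\delta m(0,\cdot)=0$.

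Next I would estimate the source term, grouping $\delta v(t,x)\,m_1(t,x)$ as a single vector so that $a(t+1,y) = \Delta t\sum_{x\in S}\langle \pi_1(t,x,y),\, \delta v(t,x)m_1(t,x)\rangle$. Cauchy--Schwarz over $x$ then yields $|a(t+1,y)|^2 \le \Delta t^2\big(\sum_{x}\|\pi_1(t,x,y)\|^2\big)\big(\sum_{x}\|\delta v(t,x)m_1(t,x)\|^2\big)$. Summing over $y$ and setting $C_{\pi_1} := \max_{t\in\mathcal{T}}\sum_{x,y}\|\pi_1(t,x,y)\|^2$, a finite grid constant independent of $v_1,v_2$, gives $\|a(t+1,\cdot)\|_2^2 \le \Delta t^2\, C_{\pi_1}\,\sigma(t)$ with $\sigma(t):=\sum_{x\in S}\|\delta v(t,x)m_1(t,x)\|^2$. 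This $\sigma(t)$ is exactly the quantity appearing on the right-hand side of \eqref{eq:energy2_init}, which is why the affine structure of $\pi$ is what makes the estimate work.

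For the propagation term I would use that, since $\|v_2\|_{\infty,\infty}\le D$, the immediate consequence of Assumption \ref{ass2} guarantees that $P(x,y):=\pi(t,x,y,v_2(t,x))$ defines a row-stochastic matrix whose entries are bounded uniformly in $v_2$ (the affine map $\pi$ being bounded on $\{\|\omega\|\le D\}$). Hence the linear map $\delta m(t,\cdot)\mapsto b(t+1,\cdot)$ has an $\ell^2$-operator norm bounded by a constant $C_P$ depending only on the grid, for instance via $\|P\|_{\mathrm{op}}\le\sqrt{\|P\|_1\,\|P\|_\infty}$ using the unit row sums and the bounded column sums, so that $\|b(t+1,\cdot)\|_2 \le C_P\,\|\delta m(t,\cdot)\|_2$.

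Finally, setting $\epsilon_t := \|\delta m(t,\cdot)\|_2$, the triangle inequality gives $\epsilon_{t+1}\le C_P\,\epsilon_t + \Delta t\sqrt{C_{\pi_1}}\sqrt{\sigma(t)}$ with $\epsilon_0=0$. Unrolling this recursion and applying Cauchy--Schwarz to the resulting sum yields $\epsilon_t^2 \le \Delta t^2\, C_{\pi_1}\big(\sum_{s=0}^{T-1}C_P^{2s}\big)\sum_{s\in\mathcal{T}}\sigma(s)$; taking the maximum over $t\in\tilde{\mathcal{T}}$ and using $\Delta t = 1/T \le 1$ to replace $\Delta t^2$ by $\Delta t$ produces \eqref{eq:energy2_init} with $C := C_{\pi_1}\sum_{s=0}^{T-1}C_P^{2s}$. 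I expect the main obstacle to be controlling the propagation term, namely verifying that $C_P$ can be chosen uniformly over all admissible $v_2$; the row-stochasticity from Assumption \ref{ass2} together with the finiteness of $S$ is what secures this. Note that $C_P$, and hence $C$, is permitted to depend on the discretization here, since only independence from $v_1,v_2$ is needed for continuity; the sharper, mesh-independent control of such constants is deferred to Section \ref{sec:mesh-indep}.
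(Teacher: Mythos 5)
Your proof is correct and follows essentially the same route as the paper's: subtract the two Fokker--Planck recursions, exploit the affineness of $\pi$ in $\omega$ to isolate a source term proportional to $\Delta t\,\langle\pi_1,\delta v\, m\rangle$, and close with a discrete Gronwall/unrolling argument (the paper compresses all of this into one displayed recursion plus an appeal to Gronwall and norm equivalence). The only cosmetic difference is the choice of decomposition --- you propagate with $\pi(\cdot,v_2)$ and carry $m_1$ in the source, which in fact matches the $m_1$ appearing on the right-hand side of \eqref{eq:energy2_init} more directly than the paper's displayed identity \eqref{eq:delta_m}.
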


\begin{proof}
Let $m_1= \fp(v_1)$ and $m_2= \fp(v_2)$.
Let $\mu= m_1-m_2$. It is easy to verify that
\begin{equation}\label{eq:delta_m}
 \begin{cases}
 \      \mu(t+1,y) = \sum_{x\in S} \pi(t,x,y,v_1(t,x)) \mu(t,x) +  \Delta t \sum_{x\in S} \langle \pi_1(t,x,y), (v_2-v_1) m_2(t,x) \rangle, \\
\       \mu(0,y) = 0.
 \end{cases}
 \end{equation}
Inequality \eqref{eq:energy2_init} immediately follows from Gronwall's inequality, keeping in mind that all norms are equivalent on the finite-dimensional vector space $\R^d(\mathcal{T} \times S)$.
\end{proof}

\begin{thm}[Existence]\label{thm1}
 Under Assumptions \ref{ass1} and \ref{ass2}, system \eqref{eq:dmfg} has a solution.
\end{thm}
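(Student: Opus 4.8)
The plan is to reformulate the existence of a solution to the coupled system \eqref{eq:dmfg} as a fixed-point problem and apply Brouwer's fixed-point theorem. The natural strategy is to compose the three mappings into a single self-map on a suitable compact convex set and verify continuity.

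First I would define the composite operator $\Phi \coloneqq \fp \circ \v \circ \hjb$, so that $m$ solves \eqref{eq:dmfg} if and only if $m = \Phi(m)$. The domain should be the set $\mathcal{P}(\tilde{\mathcal{T}}, S)$ of probability curves, which is a nonempty, convex, compact subset of the finite-dimensional space $\R(\tilde{\mathcal{T}}\times S)$ (compactness follows since each fiber $\mathcal{P}(S)$ is a closed bounded simplex). The key point is that $\Phi$ maps this set into itself: given any $m \in \mathcal{P}(\tilde{\mathcal{T}},S)$, the HJB step produces $u = \hjb(m)$, the optimal-control step produces $v = \v(u)$, and the Fokker–Planck step produces $\fp(v)$, which I must show is again a probability curve. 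The reason $\fp(v) \in \mathcal{P}(\tilde{\mathcal{T}},S)$ is exactly the observation recorded after Assumption \ref{ass2}: provided $\|v(t,x)\| \leq D$ for all $(t,x)$, one has $\pi(t,x,\cdot,v(t,x)) \in \mathcal{P}(S)$, so the forward recursion \eqref{eq:fpmap} propagates probability measures (nonnegativity and unit mass are preserved inductively in $t$, starting from $m_0 \in \mathcal{P}(S)$).

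The bound $\|v\|_{\infty,\infty} \leq D$ is therefore the crucial structural fact to establish, and I expect this to be the main obstacle. It follows from the \emph{Bounded domain} part of Assumption \ref{ass1}: in the minimization \eqref{eq:vmap} defining $\v(u)$, the running cost $\ell(t,x,\cdot)$ equals $+\infty$ whenever $\|\omega\| > D$, so any minimizer $v(t,x)$ automatically satisfies $\|v(t,x)\| \leq D$. The strong convexity of $\ell(t,x,\cdot)$ from Assumption \ref{ass1}(3), combined with the affine (hence continuous) dependence of $\pi$ on $\omega$, guarantees that the argmin in \eqref{eq:vmap} is well-defined and unique, so $\v$ is genuinely a well-defined single-valued map; similarly the same minimization appears in \eqref{eq:hjbmap}, so the HJB recursion is solvable backward in time. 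This confirms $\Phi$ is a self-map of $\mathcal{P}(\tilde{\mathcal{T}},S)$.

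Finally I would verify continuity of $\Phi$. Continuity of $\hjb$ is already provided by Lemma \ref{lm:hjb_stab}, and continuity of $\fp$ by Lemma \ref{lemma:cont_FP}. The only remaining piece is continuity of $\v$, which follows from the uniqueness of the minimizer in \eqref{eq:vmap} together with a standard Berge maximum-theorem / stability-of-argmin argument, using the strong convexity of $\ell$ to rule out a jumping minimizer as $u$ varies. With all three factors continuous, $\Phi$ is continuous, and Brouwer's theorem yields a fixed point $m^* = \Phi(m^*)$; setting $u^* = \hjb(m^*)$ and $v^* = \v(u^*)$ produces a solution $(u^*,v^*,m^*)$ of \eqref{eq:dmfg}, completing the proof.
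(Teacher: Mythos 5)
Your proposal is correct and follows essentially the same route as the paper: a Brouwer fixed-point argument for the composite map $\fp \circ \v \circ \hjb$ on the compact convex set $\mathcal{P}(\tilde{\mathcal{T}},S)$, with the self-map property coming from Assumption \ref{ass2} together with the bound $\|v\|_{\infty,\infty}\leq D$, and continuity supplied by Lemmas \ref{lm:hjb_stab} and \ref{lemma:cont_FP} plus the strong convexity of $\ell$ for $\v$. The paper's proof is just a more condensed version of the same argument, deferring the continuity of $\v$ to the reference \cite{BLP2022}.
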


\begin{proof}
We follow the proof of \cite[Thm.\@ 3.6]{BLP2022}.
We note first that by Assumption \ref{ass2}, the composed mapping $\fp \circ \v \circ \hjb$ is valued in $\mathcal{P}(\tilde{\mathcal{T}},S)$.
By Brouwer's fixed point theorem, it suffices to show that $\fp \circ \v \circ \hjb$ is continuous. The continuity of $\hjb$ and $\fp$ was established in Lemmas \ref{lm:hjb_stab}-\ref{lemma:cont_FP}. The continuity of $\v$ is deduced from the strong convexity of $\ell$, see step 2 of the proof of \cite[Thm.\@ 3.6]{BLP2022}.
\end{proof}

\subsection{Potential formulation}\label{sec:pdmfg}

Similarly to the case of continuous MFGs (see \cite{lasry2007mean,benamou2017} for example), the system \eqref{eq:dmfg} has a potential formulation. Consider the following optimal control problem:
\begin{equation} \label{pb:P}\tag{$P$}
\inf_{
\begin{subarray}{c}
m \in \mathcal{P}(\tilde{\mathcal{T}}, S) \\
v \in \mathbb{R}^d(\mathcal{T}\times S)
\end{subarray}
} J(m,v),
\quad \text{subject to: } (m,v) \in \mathcal{A},
\end{equation}
where the cost function $J$ and the set $\mathcal{A}$ are defined by
\begin{align*}
J(m,v)
= {} &
\Delta t \sum_{t \in \mathcal{T} } \sum_{x \in S} \ell[v](t,x)m(t,x) + \Delta t \sum_{t\in \mathcal{T}} F(t,m(t)) + \sum_{x\in S} g(x)m(T,x); \\
\mathcal{A} = {} & \left\{ (m,v) \in \mathcal{P}(\tilde{\mathcal{T}}, S)  \times \mathbb{R}^d(\mathcal{T}\times S) \,\big\vert \, m = \fp(v) ,\, \|v\|_{\infty,\infty}\leq D \right\}.
\end{align*}
Problem \eqref{pb:P} is a non-convex problem which can be made convex with the classical Benamou-Brenier transform (see \cite{benamou2017} for example) defined by
\begin{equation} \label{eq:BB}
    \chi\colon  (m,v) \in \mathcal{A} \mapsto
    (m,mv) \in \mathcal{P}(\tilde{\mathcal{T}}, S)  \times \mathbb{R}^d(\mathcal{T}\times S).
\end{equation}
Here $mv$ is the pointwise product of $m$ and $v$: $mv(t,x) \coloneqq m(t,x) v(t,x)$, for all $t \in \mathcal{T}$ and for all $x \in S$.
We set $\tilde{\mathcal{A}}= \chi(\mathcal{A})$ and consider the cost function $\tilde{J}$, defined by
\begin{equation*}
\tilde{J}(m,w)=\Delta t \sum_{t\in \mathcal{T}}\sum_{x\in S} \tilde{\ell}[m,w](t,x) + \Delta t \sum_{t\in \mathcal{T}} F(t,m(t)) + \sum_{x\in S} g(x)m(T,x),
\end{equation*}
where the function $\tilde{\ell}[m,w] \colon \mathcal{T}\times S \to \bar{\R}$ is defined by
\begin{equation*}
\tilde{\ell}[m,w](t,x)= \begin{cases}
\begin{array}{ll}
\ell\Big(t,x,\frac{w(t,x)}{m(t,x)} \Big)m(t,x), \quad & \text{if } m(t,x)\neq 0,\\
0, & \text{if } m(t,x)=w(t,x)= 0,\\
+ \infty, & \text{otherwise.}
\end{array}
\end{cases}
\end{equation*}
The new problem of interest is
\begin{equation} \label{pb:tP}\tag{$\tilde{P}$}
\inf_{
\begin{subarray}{c}
m \in \mathcal{P}(\tilde{\mathcal{T}}, S) \\
w \in \mathbb{R}^d(\mathcal{T}\times S)
\end{subarray}
} \tilde{J}(m,w),
\quad \text{subject to: } (m,w) \in \tilde{\mathcal{A}}.
\end{equation}
In the rest of the section, we investigate some properties of Problems \eqref{pb:P} and \eqref{pb:tP}, as well as their relationship with \eqref{eq:dmfg}

\begin{lem}\label{lm:equi}
It holds that $\textnormal{val}\eqref{pb:P} = \textnormal{val}\eqref{pb:tP}$.
\end{lem}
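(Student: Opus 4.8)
The plan is to use that the Benamou--Brenier change of variables $\chi$ is, by the very definition $\tilde{\mathcal{A}}=\chi(\mathcal{A})$, a surjection from $\mathcal{A}$ onto $\tilde{\mathcal{A}}$, and that it carries $J$ onto $\tilde{J}$. Concretely, I would first establish the transport identity
\[
J(m,v)=\tilde{J}(\chi(m,v)), \qquad \forall\, (m,v)\in\mathcal{A},
\]
and then deduce the equality of values by the routine ``infimum over the image equals infimum over the domain'' argument. Given the identity, for every $(m,v)\in\mathcal{A}$ one has $\chi(m,v)\in\tilde{\mathcal{A}}$ with $\tilde{J}(\chi(m,v))=J(m,v)$, which gives $\textnormal{val}\eqref{pb:tP}\le\textnormal{val}\eqref{pb:P}$; conversely, every $(m,w)\in\tilde{\mathcal{A}}$ equals $\chi(m,v)$ for some $(m,v)\in\mathcal{A}$ sharing the same first component $m$, so that $\textnormal{val}\eqref{pb:P}\le J(m,v)=\tilde{J}(m,w)$, yielding the reverse inequality. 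Together these give the claim.

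To prove the transport identity, I would observe that the potential term $\Delta t\sum_{t\in\mathcal{T}}F(t,m(t))$ and the terminal term $\sum_{x\in S}g(x)m(T,x)$ depend only on $m$, which is left unchanged by $\chi$. It therefore suffices to check the pointwise equality of running costs
\[
\ell[v](t,x)\,m(t,x)=\tilde{\ell}[m,mv](t,x), \qquad \forall\, (t,x)\in\mathcal{T}\times S.
\]
This I would verify by distinguishing the cases entering the definition of $\tilde{\ell}$. If $m(t,x)>0$, the middle argument $w(t,x)=mv(t,x)$ satisfies $w(t,x)/m(t,x)=v(t,x)$, hence $\tilde{\ell}[m,mv](t,x)=\ell(t,x,v(t,x))\,m(t,x)=\ell[v](t,x)\,m(t,x)$. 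If $m(t,x)=0$, then $w(t,x)=m(t,x)v(t,x)=0$, so $\tilde{\ell}[m,mv](t,x)=0$ by definition, matching $\ell[v](t,x)\cdot 0$.

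The only point requiring care is this last, degenerate case: when $m(t,x)=0$ and $v(t,x)$ lies outside the effective domain of $\ell(t,x,\cdot)$, the product $\ell[v](t,x)\,m(t,x)$ appearing in $J$ is a formally indeterminate $0\cdot(+\infty)$. The function $\tilde{\ell}$ is precisely constructed to assign the value $0$ here, so that the identity holds under the convention $0\cdot(+\infty)=0$ implicit in the definition of $J$. Alternatively, since the value of $v$ at states where $m$ vanishes affects neither the constraint $m=\fp(v)$ nor any term of $J$, one may without loss of generality take $v(t,x)$ inside the domain of $\ell(t,x,\cdot)$ at such states---which is feasible because this domain is nonempty and contained in $\{\|v\|\le D\}$ by Assumption \ref{ass1}---rendering the product genuinely $0$ and removing the ambiguity. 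This degenerate case is the only subtlety; the remainder of the argument is bookkeeping, which is why a succinct proof suffices.
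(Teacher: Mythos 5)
Your proposal is correct and follows essentially the same route as the paper: the paper's proof likewise observes that $\tilde{J}(\chi(m,v)) = J(m,v)$ for all $(m,v)\in\mathcal{A}$ and concludes via the surjectivity of $\chi$ onto $\tilde{\mathcal{A}}=\chi(\mathcal{A})$, though it leaves the case-checking (including the degenerate $m(t,x)=0$ case you rightly flag) implicit.
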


\begin{proof}
It follows from the definitions of $\mathcal{A}$, $J$, $\tilde{J}$, and $\chi$ that for any $(m,v) \in\mathcal{A}$, we have $\tilde{J}(\chi(m,v)) = J(m,v)$. The lemma follows immediately.
\end{proof}

We next discuss the convexity of Problem \eqref{pb:tP}.

\begin{lem}[Convexity] \label{lm:conv}
The set $\tilde{\mathcal{A}}$ is convex. The cost function $\tilde{J}(m,w)$ is convex.
\end{lem}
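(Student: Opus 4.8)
The plan is to exploit the fact that the Benamou--Brenier change of variables $\chi(m,v)=(m,mv)$ turns the nonlinear Fokker--Planck constraint into a linear one, precisely because $\pi$ is affine in the control. I would first rewrite the defining relations of $\tilde{\mathcal{A}}$ directly in the variables $(m,w)$. Substituting $w=mv$ into the recursion \eqref{eq:fpmap} and using $\pi(t,x,y,\omega)=\pi_0(t,x,y)+\Delta t\langle\pi_1(t,x,y),\omega\rangle$ gives
\[
m(t+1,y)=\sum_{x\in S}\pi_0(t,x,y)m(t,x)+\Delta t\sum_{x\in S}\langle \pi_1(t,x,y),w(t,x)\rangle,
\]
which is \emph{linear} in $(m,w)$, while the control bound $\|v\|_{\infty,\infty}\le D$ becomes the pointwise second-order-cone constraint $\|w(t,x)\|\le D\,m(t,x)$.

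Next I would show that $\tilde{\mathcal{A}}$ coincides with the set $\mathcal{B}$ of pairs $(m,w)\in\mathcal{P}(\tilde{\mathcal{T}},S)\times\mathbb{R}^d(\mathcal{T}\times S)$ satisfying the linear equation above, the initial condition $m(0,\cdot)=m_0$, and $\|w(t,x)\|\le D\,m(t,x)$ for all $(t,x)$. The inclusion $\tilde{\mathcal{A}}\subseteq\mathcal{B}$ is immediate from the computation above together with $m=\fp(v)\in\mathcal{P}(\tilde{\mathcal{T}},S)$. For $\mathcal{B}\subseteq\tilde{\mathcal{A}}$, given $(m,w)\in\mathcal{B}$ I set $v(t,x)=w(t,x)/m(t,x)$ where $m(t,x)>0$ and $v(t,x)=0$ otherwise; the cone constraint forces $w(t,x)=0$ wherever $m(t,x)=0$, so $w=mv$ and $\|v\|_{\infty,\infty}\le D$ hold, and the linear equation reduces to $m=\fp(v)$, whence $(m,w)=\chi(m,v)\in\tilde{\mathcal{A}}$. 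Convexity of $\tilde{\mathcal{A}}=\mathcal{B}$ then follows since $\mathcal{B}$ is the intersection of the convex set $\mathcal{P}(\tilde{\mathcal{T}},S)$, the affine set defined by the linear recursion and the initial condition, and the convex constraints $\|w(t,x)\|\le D\,m(t,x)$.

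For the convexity of $\tilde{J}$ I would argue term by term. The terminal term $\sum_{x}g(x)m(T,x)$ is linear in $m$. The coupling term $\sum_{t}F(t,m(t))$ is convex in $m$, since the subgradient inequality \eqref{eq:linear} of Lemma \ref{lm:F_bis} is exactly the statement that each $F(t,\cdot)$ is convex. The running-cost term $\Delta t\sum_{t,x}\tilde{\ell}[m,w](t,x)$ is the sum of the perspective functions of the maps $\ell(t,x,\cdot)$, which are convex by the $\alpha$-strong convexity in Assumption \ref{ass1}; the perspective of a convex function, with the convention that it equals $0$ at $(m,w)=(0,0)$ and $+\infty$ when $m=0\neq w$, is jointly convex and lower semicontinuous, which is precisely the definition of $\tilde{\ell}$. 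Summing convex functions of $(m,w)$ yields the convexity of $\tilde{J}$.

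The only delicate point is the degenerate case $m(t,x)=0$, which must be treated with care both in the identification $\tilde{\mathcal{A}}=\mathcal{B}$ (where the cone constraint is what guarantees $w=mv$) and in the perspective-function argument for $\tilde{\ell}$; once the affine structure induced by $\chi$ and the perspective interpretation of $\tilde{\ell}$ are in place, the remaining verifications are routine.
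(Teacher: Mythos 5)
Your proof is correct, and for the convexity of $\tilde{\mathcal{A}}$ it takes a genuinely different route from the paper. The paper argues directly on convex combinations: given $(m_1,w_1),(m_2,w_2)\in\tilde{\mathcal{A}}$ with $w_i=m_iv_i$, it exhibits an explicit mixture control, $v(t,x)=\frac{\lambda m_1v_1+(1-\lambda)m_2v_2}{\lambda m_1+(1-\lambda)m_2}(t,x)$ (set to $0$ where the denominator vanishes), and checks that $(\lambda m_1+(1-\lambda)m_2,\,v)\in\mathcal{A}$ with $\chi(m,v)=(m,w)$. You instead re-parametrize the whole set: using the affineness of $\pi$ in $\omega$ you show $\tilde{\mathcal{A}}$ equals the set cut out by the linear recursion $m(t+1,y)=\sum_x\pi_0(t,x,y)m(t,x)+\Delta t\sum_x\langle\pi_1(t,x,y),w(t,x)\rangle$, the initial condition, membership in $\mathcal{P}(\tilde{\mathcal{T}},S)$, and the second-order-cone constraints $\|w(t,x)\|\le D\,m(t,x)$, whence convexity is automatic. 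The two arguments carry the same mathematical content (your reverse inclusion $\mathcal{B}\subseteq\tilde{\mathcal{A}}$ reconstructs essentially the same $v=w/m$, and your cone constraint is what the paper's case distinction at $m_1=m_2=0$ handles), but your formulation buys a bit more: it exhibits $\tilde{\mathcal{A}}$ as an explicit polyhedral-plus-conic feasible set, which is closer to the classical Benamou--Brenier convexification and would also immediately give closedness. The paper's construction is more economical since it never needs to prove an equality of sets. For the convexity of $\tilde{J}$ your argument is the same as the paper's (linearity of the terminal term, Lemma \ref{lm:F_bis} for $F$, and the perspective-function property for $\tilde{\ell}$, which the paper delegates to \cite{combettes2018}); your observation that the convention $\tilde{\ell}(0,0)=0$, $\tilde{\ell}(0,w)=+\infty$ for $w\neq 0$ is the closed perspective is consistent with the bounded domain of $\ell$ from Assumption \ref{ass1}(1). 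No gaps.
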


\begin{proof}
Take any $(m_1,w_1), (m_2,w_2)$ in $\tilde{\mathcal{A}}$ and any $\lambda\in (0,1)$. By the definition of $\tilde{\mathcal{A}}$, there exist $v_1,v_2 \in \mathbb{R}^d(\mathcal{T}\times S)$, such that $(m_i,v_i)\in\mathcal{A}$ and $w_i = m_iv_i$ for $i=1,2$. Let
\begin{align*}
 m & = \lambda m_1 + (1-\lambda)m_2, \\
 w & = \lambda w_1 +(1-\lambda)w_2,\\[0.2em]
 v(t,x) & =  \begin{cases} \
 0 , \qquad & \text{if }m_1(t,x)=m_2(t,x) =0, \\
\ \frac{\lambda m_1 v_1 + ( 1 - \lambda )m_2v_2}{\lambda m_1 + (1-\lambda)m_2}(t,x), \quad &\text{otherwise}.
 \end{cases}
\end{align*}
We can check that $(m,v)\in\mathcal{A}$ and that $(m,w) = \chi(m,v)$. The convexity of $\tilde{\mathcal{A}}$ follows.

The function $\tilde{J}$ is defined as the sum of three terms. The last one is linear, thus convex. The second one is also convex, by Lemma \ref{lm:F_bis}. Finally, for any $(t,x)$, the map $(m,w) \mapsto \tilde{\ell}[m,w](t,x)$ is convex (see \cite[Proposition 2.3]{combettes2018}). The convexity of $\tilde{J}$ follow.
\end{proof}

Given $m'\in \mathcal{P}(\tilde{\mathcal{T}},S)$, we consider the cost function $\tilde{J}_{m'}$, defined by
\begin{equation*}
\tilde{J}_{m'}(m,w) = \Delta t \sum_{t\in \mathcal{T}}\sum_{x\in S} \tilde{\ell}[m,w](t,x) + f(t,x,m'(t))m(t,x) + \sum_{x\in S} g(x)m(T,x),
\end{equation*}
for $(m,w) \in \mathcal{P}(\tilde{T},S) \times \R^d(\mathcal{T} \times S)$.
We will regard $\tilde{J}_{m'}$ as a partial linearization of $J$ around $m'$. We define the corresponding optimal control problem:
\begin{equation} \label{pb:ltP} \tag{$P_{m'}$}
\inf_{(m,w) \in \tilde{\mathcal{A}}} \ \tilde{J}_{m'}(m,w).
\end{equation}

\begin{lem} \label{lemma:convJ}
Let $(m_1,w_1)$ and $(m_2,w_2)$ be in $\tilde{\mathcal{A}}$. Then
\begin{equation*}
\tilde{J}(m_2,w_2)- \tilde{J}(m_1,w_1)
\geq
\tilde{J}_{m_1}(m_2,w_2)- \tilde{J}_{m_1}(m_1,w_1).
\end{equation*}
\end{lem}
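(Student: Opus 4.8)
The inequality to establish compares the decrease of the full objective $\tilde{J}$ against the decrease of its partial linearization $\tilde{J}_{m_1}$, both taken from $(m_1,w_1)$ to $(m_2,w_2)$. The key observation is that $\tilde{J}$ and $\tilde{J}_{m_1}$ share two of their three terms: the Benamou--Brenier running cost $\Delta t \sum_{t,x}\tilde{\ell}[m,w](t,x)$ and the terminal cost $\sum_x g(x)m(T,x)$ appear identically in both functionals. Therefore, when I form the difference of differences, these two terms cancel completely, and the inequality reduces to a statement purely about the coupling term.

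The plan is therefore as follows. First I would write out $\tilde{J}(m_2,w_2)-\tilde{J}(m_1,w_1)$ and $\tilde{J}_{m_1}(m_2,w_2)-\tilde{J}_{m_1}(m_1,w_1)$ explicitly, and subtract. After the cancellation of the $\tilde{\ell}$ and $g$ terms, the claimed inequality becomes
\begin{equation*}
\Delta t \sum_{t\in\mathcal{T}}\big( F(t,m_2(t))-F(t,m_1(t)) \big)
\geq
\Delta t \sum_{t\in\mathcal{T}}\sum_{x\in S} f(t,x,m_1(t))\big( m_2(t,x)-m_1(t,x)\big).
\end{equation*}
Second, I would observe that this is exactly the convexity inequality \eqref{eq:linear} of Lemma \ref{lm:F_bis}, applied term by term for each fixed $t\in\mathcal{T}$ with the choice $m_1(t)$ and $m_2(t)$ in place of $m_1$ and $m_2$, and then summed over $t$ with the common positive weight $\Delta t$. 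Since each summand dominates its linearized counterpart, the sum inherits the inequality.

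There is no real obstacle here: the lemma is essentially the formal statement that $\tilde{J}_{m_1}$ is a first-order minorant of $\tilde{J}$ at $m_1$, which is guaranteed by the convexity of $F$ established earlier. The only point requiring a moment of care is bookkeeping: I must confirm that $m_i(t)$ lies in $\mathcal{P}(S)$ for each $t$ (so that Lemma \ref{lm:F_bis} applies), which holds because $(m_i,w_i)\in\tilde{\mathcal{A}}$ forces $m_i\in\mathcal{P}(\tilde{\mathcal{T}},S)$, and that the summation index on the $f$-term matches correctly once the shared terms are removed. Everything else is direct cancellation.
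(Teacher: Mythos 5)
Your proposal is correct and follows exactly the paper's (very terse) argument: the $\tilde{\ell}$ and $g$ terms cancel in the difference of differences, and the remaining inequality on the coupling term is precisely Lemma \ref{lm:F_bis} applied at each $t\in\mathcal{T}$ and summed with weight $\Delta t$. Nothing is missing.
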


\begin{proof}
This is an immediate consequence of the definitions of $\tilde{J}$, $\tilde{J}_{m'}$, and Lemma \ref{lm:F_bis}.
\end{proof}

The next lemma provides us with a solution to Problem \eqref{pb:ltP}.

\begin{lem}\label{lm:fund}
Let $m' \in \mathcal{P}(\tilde{\mathcal{T}},S)$.
Let us set
$\tilde{v}= \textnormal{\v} \circ \textnormal{\hjb}(m')$,
$\tilde{m}= \textnormal{\fp} \circ \textnormal{\v} \circ \textnormal{\hjb}(m')$,
and
$\tilde{w}= \tilde{m} \tilde{v}$.
Then $(\tilde{m},\tilde{w})$ is the unique solution to \eqref{pb:ltP}.
Moreover, for any $(m,w) \in \tilde{\mathcal{A}}$ and for any $v$ such that $w= mv$, it holds that
\begin{equation} \label{eq:fundamental_ineq}
\tilde{J}_{m'}(m,w) - \tilde{J}_{m'}(\tilde{m},\tilde{w}) \geq  \frac{\alpha}{2}\Delta t \sum_{t\in \mathcal{T}}\sum_{x\in S} \|(v-\tilde{v})(t,x)\|^2m(t,x).
\end{equation}
\end{lem}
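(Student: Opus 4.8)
The plan is to recognize Problem \eqref{pb:ltP} as a standard finite-horizon discrete stochastic optimal control problem and to prove \eqref{eq:fundamental_ineq} by a verification argument. First I would undo the Benamou--Brenier transform: for $(m,w)\in\tilde{\mathcal{A}}$ with $w=mv$ one has $\tilde{\ell}[m,w](t,x)=\ell(t,x,v(t,x))m(t,x)$, so that
\[
\tilde{J}_{m'}(m,w)=\Delta t\sum_{t\in\mathcal{T}}\sum_{x\in S}\bar{\ell}_{m'}(t,x,v(t,x))m(t,x)+\sum_{x\in S}g(x)m(T,x),
\]
with $m=\textnormal{\fp}(v)$ and $\|v\|_{\infty,\infty}\le D$. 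This is exactly the cost of the control problem with running cost $\bar{\ell}_{m'}$, transition kernel $\pi$, and terminal cost $g$, whose dynamic programming equation is \eqref{eq:hjbmap} at $m=m'$. Hence $u\coloneqq\textnormal{\hjb}(m')$ is its value function, $\tilde{v}=\textnormal{\v}(u)$ the optimal feedback --- well defined and satisfying $\|\tilde{v}\|_{\infty,\infty}\le D$ by the bounded-domain part of Assumption \ref{ass1} --- and $\tilde{m}=\textnormal{\fp}(\tilde{v})$ the induced trajectory, so that $(\tilde{m},\tilde{w})\in\tilde{\mathcal{A}}$ is feasible.

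The core of the argument is a telescoping identity for $t\mapsto\sum_{x}u(t,x)m(t,x)$. Using the Fokker--Planck recursion \eqref{eq:fpmap}, for each $t$ I would write
\[
\sum_{y\in S}u(t+1,y)m(t+1,y)=\sum_{x\in S}m(t,x)\sum_{y\in S}\pi(t,x,y,v(t,x))u(t+1,y).
\]
The decisive observation is that, because $\pi$ is affine in $\omega$, the map $\omega\mapsto\ell(t,x,\omega)\Delta t+\sum_{y}\pi(t,x,y,\omega)u(t+1,y)$ is $\alpha\Delta t$-strongly convex by Assumption \ref{ass1}(3), with minimizer $\tilde{v}(t,x)$. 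Evaluating strong convexity at the minimizer gives, for every $\omega$,
\[
\bar{\ell}_{m'}(t,x,\omega)\Delta t+\sum_{y}\pi(t,x,y,\omega)u(t+1,y)\ge u(t,x)+\frac{\alpha\Delta t}{2}\|\omega-\tilde{v}(t,x)\|^2,
\]
with equality at $\omega=\tilde{v}(t,x)$ thanks to \eqref{eq:hjbmap}. Substituting $\omega=v(t,x)$, summing over $t\in\mathcal{T}$, and telescoping --- the boundary terms being $\sum_x g(x)m(T,x)$ and $\sum_x u(0,x)m_0(x)$ --- would yield
\[
\tilde{J}_{m'}(m,w)\ge\sum_{x\in S}u(0,x)m_0(x)+\frac{\alpha}{2}\Delta t\sum_{t\in\mathcal{T}}\sum_{x\in S}\|(v-\tilde{v})(t,x)\|^2m(t,x).
\]
Running the same computation for $(\tilde{m},\tilde{w})$, where each inequality is an equality and the quadratic remainder vanishes, gives $\tilde{J}_{m'}(\tilde{m},\tilde{w})=\sum_x u(0,x)m_0(x)$; subtracting the two produces \eqref{eq:fundamental_ineq}.

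This inequality already shows that $(\tilde{m},\tilde{w})$ is optimal. For uniqueness, I would note that any minimizer $(m,w)$ forces the right-hand side to vanish, hence $v(t,x)=\tilde{v}(t,x)$ whenever $m(t,x)>0$; a forward induction on \eqref{eq:fpmap}, starting from $m(0)=m_0=\tilde{m}(0)$ and using that controls at null-mass states are irrelevant to the dynamics, then gives $m=\tilde{m}$ and therefore $w=mv=\tilde{w}$. The main obstacle will be the middle step: one must exploit the affineness of $\pi$ in $\omega$ to convert the $\alpha$-strong convexity of $\ell$ into $\alpha\Delta t$-strong convexity of the pointwise minimization, and then carry the quadratic remainder --- correctly weighted by $m(t,x)$ --- through the telescoping without losing a factor. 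The remainder is routine verification-theorem bookkeeping.
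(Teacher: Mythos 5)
Your argument is correct and is essentially the paper's route: the paper outsources inequality \eqref{eq:fundamental_ineq} to \cite[Sec.\@ 3.5]{BLP2022}, and the verification/telescoping computation you carry out (affine $\pi$ turning the $\alpha$-strong convexity of $\ell$ into an $\alpha\Delta t$-strongly convex inner minimization in \eqref{eq:hjbmap}, then summing against $m$ via \eqref{eq:fpmap}) is precisely the ``fundamental equality'' argument being cited. The only divergence is in the uniqueness step, where you identify $\hat m=\tilde m$ by forward induction on \eqref{eq:fpmap} using that null-mass states do not affect the dynamics, whereas the paper invokes the energy estimate of Lemma \ref{lemma:cont_FP}; both are valid, and your induction is if anything more elementary.
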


\begin{proof}
The first inequality is proved in \cite[Sec.\@ 3.5, page 14]{BLP2022}. It is of similar nature to the fundamental equality of \cite{achdou2013mean}.
As a consequence of \eqref{eq:fundamental_ineq}, the pair $(\tilde{m},\tilde{w})$ is a solution to \eqref{pb:ltP}. It remains to prove uniqueness. Let $(\hat{m},\hat{w}) \in \tilde{\mathcal{A}}$ be a solution to \eqref{pb:ltP}. Let $\hat{v}$ be such that $\hat{w}= \hat{m} \hat{v}$. Then, by inequality \eqref{eq:fundamental_ineq}, we have
$\sum_{t \in \mathcal{T}} \sum_{x \in S} \| \hat{v}(t,x)- \tilde{v}(t,x) \|^2 \hat{m}(t,x)= 0$. It follows next that $(\hat{v}-\tilde{v})(t,x) \hat{m}(t,x)=0$ for all $(t,x)$. Applying Lemma \ref{lemma:cont_FP}, we immediately obtain that $\hat{m}= \tilde{m}$. Finally, we have $\hat{w}- \tilde{w}
= \hat{m} \hat{v}- \tilde{m}\tilde{v}
= \hat{m}(\hat{v}-\tilde{v})= 0$,
which concludes the proof of uniqueness.
\end{proof}

\begin{lem} \label{lem:uniqueness_all}
System \eqref{eq:dmfg} has a unique solution $(\bar{m},\bar{u},\bar{v})$. Moreover, $(\bar{m},\bar{w}) \coloneqq \chi(\bar{m},\bar{v})$ is the unique solution to \eqref{pb:tP}.
\end{lem}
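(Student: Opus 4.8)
The plan is to exploit the fixed-point characterization of \eqref{eq:dmfg} together with the partial-linearization machinery of Lemmas \ref{lm:fund} and \ref{lemma:convJ}. First I would observe that a triple $(\bar m,\bar u,\bar v)$ solves \eqref{eq:dmfg} precisely when $\bar m$ is a fixed point of $\fp \circ \v \circ \hjb$, with $\bar u = \hjb(\bar m)$ and $\bar v = \v(\bar u)$. Theorem \ref{thm1} already furnishes such a solution. Since $\bar v = \v(\bar u)$ is, by \eqref{eq:vmap}, an argmin of a cost involving $\ell(t,x,\cdot)$, whose domain lies in the ball of radius $D$ by Assumption \ref{ass1}, we have $\|\bar v\|_{\infty,\infty}\leq D$, so that $(\bar m,\bar w) \coloneqq \chi(\bar m,\bar v) \in \tilde{\mathcal{A}}$.

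The central step is to show that any such fixed point yields a minimizer of the full problem \eqref{pb:tP}. Applying Lemma \ref{lm:fund} with $m' = \bar m$, and using that $\bar m$ is a fixed point (so that $\tilde m = \bar m$, $\tilde v = \bar v$, $\tilde w = \bar w$), I obtain that $(\bar m,\bar w)$ is the unique solution of the linearized problem \eqref{pb:ltP}, i.e.\ the unique minimizer of $\tilde{J}_{\bar m}$ over $\tilde{\mathcal{A}}$. Then, for arbitrary $(m,w) \in \tilde{\mathcal{A}}$, the convexity inequality of Lemma \ref{lemma:convJ} applied with $m_1 = \bar m$ gives
\begin{equation*}
\tilde{J}(m,w) - \tilde{J}(\bar m,\bar w) \geq \tilde{J}_{\bar m}(m,w) - \tilde{J}_{\bar m}(\bar m,\bar w) \geq 0,
\end{equation*}
the second inequality holding because $(\bar m,\bar w)$ minimizes $\tilde{J}_{\bar m}$. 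Hence $(\bar m,\bar w)$ solves \eqref{pb:tP}.

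For uniqueness of the solution to \eqref{pb:tP}, I would take an arbitrary solution $(\hat m,\hat w)$. Chaining the same two inequalities with the equality $\tilde{J}(\hat m,\hat w) = \tilde{J}(\bar m,\bar w)$ forces $\tilde{J}_{\bar m}(\hat m,\hat w) = \tilde{J}_{\bar m}(\bar m,\bar w)$, so that $(\hat m,\hat w)$ also minimizes $\tilde{J}_{\bar m}$; the uniqueness clause of Lemma \ref{lm:fund} then yields $(\hat m,\hat w) = (\bar m,\bar w)$. Finally, for uniqueness of the \eqref{eq:dmfg} solution, I would let $(\hat m,\hat u,\hat v)$ be any solution, form $(\hat m,\hat w) = \chi(\hat m,\hat v) \in \tilde{\mathcal{A}}$, note it solves \eqref{pb:tP} by the central step applied to the fixed point $\hat m$, and deduce $\hat m = \bar m$ from uniqueness of the \eqref{pb:tP} solution. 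Since $\hjb$ and $\v$ are single-valued, $\hat u = \hjb(\hat m) = \hjb(\bar m) = \bar u$ and $\hat v = \v(\hat u) = \bar v$, which closes the argument.

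I expect the only delicate point to be the bookkeeping in the central step, namely correctly identifying the fixed-point data $(\bar m,\bar v,\bar w)$ with the output $(\tilde m,\tilde v,\tilde w)$ of Lemma \ref{lm:fund} and verifying the membership $(\bar m,\bar w)\in\tilde{\mathcal{A}}$. No genuine estimate is required beyond this, since the strong-convexity and uniqueness content is already packaged into inequality \eqref{eq:fundamental_ineq} of Lemma \ref{lm:fund}, and everything else reduces to a direct chaining of the stated lemmas.
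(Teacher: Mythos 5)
Your proof is correct and follows essentially the same route as the paper's: combine Lemma \ref{lemma:convJ} with Lemma \ref{lm:fund} to show that any solution of \eqref{eq:dmfg} yields a minimizer of \eqref{pb:tP}, use the uniqueness clause of Lemma \ref{lm:fund} (any minimizer of $\tilde{J}$ also minimizes $\tilde{J}_{\bar m}$) to get uniqueness for \eqref{pb:tP}, and then propagate back through the single-valued maps $\hjb$ and $\v$ to conclude uniqueness for \eqref{eq:dmfg}. Your additional remarks on the fixed-point identification and the membership $(\bar m,\bar w)\in\tilde{\mathcal{A}}$ are details the paper leaves implicit, but they do not change the argument.
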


\begin{proof}
Let $(m',u',v')$ be a solution to \eqref{eq:dmfg}. Let $w'= m'v'$. Combining Lemma \ref{lemma:convJ} and Lemma \ref{lm:fund}, we deduce that for any $(m,w) \in \tilde{\mathcal{A}}$,
\begin{equation*}
\tilde{J}(m,w)- \tilde{J}(m',w')
\geq
\tilde{J}_{m'}(m,w)- \tilde{J}_{m'}(m',w')
\geq 0.
\end{equation*}
Thus $(m',w')$ is a solution to \eqref{pb:tP}.

We next prove that $(m',w')$ is the unique solution to \eqref{pb:tP}. Let $(m,w)$ be a solution to \eqref{pb:tP}. The above inequality shows that $(m,w)$ is also a solution to \eqref{pb:ltP}. Thus by Lemma \ref{lm:fund}, $(m,w)=(m',w')$.

It remains to prove the uniqueness of the solution to \eqref{eq:dmfg}. Let $(m,u,v)$ be a solution to \eqref{eq:dmfg}. As was proved above, $(m,mv)$ is a solution to \eqref{pb:tP} and therefore $m= m'$. It follows that $u= \hjb(m)= \hjb(m')= u'$ and that $v= \v(u)= \v(u')= v'$, which concludes the proof.
\end{proof}

\section{Generalized Frank-Wolfe algorithm: the discrete case}
\label{sec:FW}

We investigate in this section the convergence of the GFW algorithm, applied to the (convex) potential problem \eqref{pb:tP}.
In this section, Assumptions \ref{ass1}-\ref{ass3} are supposed to be satisfied. We recall that \eqref{eq:dmfg} has a unique solution
$(\bar{u},\bar{v},\bar{m})$ and that by Lemma \ref{lem:uniqueness_all}, $(\bar{m},\bar{w}) = \chi(\bar{m},\bar{v})$ is the unique solution of problem \eqref{pb:tP}.

\subsection{Algorithm and convergence results}

We first define the mapping $\br \colon \mathcal{P}(\tilde{\mathcal{T}},S) \rightarrow \tilde{\mathcal{A}}$. Given $m' \in \mathcal{P}(\tilde{\mathcal{T}},S)$, we obtain $(\tilde{m},\tilde{w})= \br(m')$ by successively computing
\begin{equation*}
\tilde{v} = \v \circ \hjb (m'), \quad
\tilde{m} = \fp(\tilde{v}), \quad \text{and} \quad
\tilde{w}= \tilde{m}\tilde{v}.
\end{equation*}
We refer to $\br$ as the best-response mapping: given a prediction $m'$ of the equilibrium distribution of the agents, $\tilde{v}$ (as defined above) is the optimal feedback for the underlying optimal control problem and $\tilde{m}$ the resulting distribution.
As was demonstrated in Lemma \ref{lm:fund}, $\br(m')$ is also the unique solution to the linearized problem \eqref{pb:ltP}. This allows us to write the GFW algorithm for the resolution of \eqref{pb:tP} in the form of a best-response algorithm.

\medskip

\begin{algorithm}[H]
\SetAlgoLined
Initialization: $ (m^0, w^0) \in \tilde{\mathcal{A}} $\;
First iteration: $ (m^1,w^1)  = (\bar{m}^0,\bar{w}^0) = \br(m^0) $    \;
\For{$k= 1,2,\ldots$}{
\medskip
\textbf{Step 1: Resolution of the partial linearized problem.} \\
Set $(\bar{m}^k, \bar{w}^k) = \br(m^k) $\;
\medskip
\textbf{Step 2: Update.}\\
Choose $\lambda_k \in [0,1]$\;
Set $(m^{k+1},w^{k+1}) = (1-\lambda_k) (m^{k},w^{k}) + \lambda_k (\bar{m}^k, \bar{w}^k)$\;
\medskip
}
\caption{Generalized Frank-Wolfe Algorithm }\label{alg1}
\end{algorithm}

\medskip

Note that the choice of the stepsize $\lambda_k$ will be discussed in Proposition \ref{prop:convergence1}.
We introduce now three constants, $C_1$, $C_2$, and $C_3$, that will be used for the convergence analysis of Algorithm \ref{alg1}.
The constants $C_1$ and $C_2$ are defined by
\begin{equation} \label{eq:def_C_12}
C_1 = \sup_{\|v\|_{\infty,\infty}\leq D} \| \fp(v) \|_{\infty,2}^2
\quad \text{and}
\quad
C_2= \sup_{m\in \mathcal{P}(\tilde{\mathcal{T}},S)} \|\fp \circ \v \circ \hjb (m) \|_{\infty,\infty}.
\end{equation}
The finiteness of $C_1$ and $C_2$ follows from the compactness of $\mathcal{P}(\tilde{\mathcal{T}},S)$ and from the continuity of the three mappings $\hjb$, $\v$, and $\fp$. Note that
\begin{equation} \label{eq:def_C_12bis}
C_1 =  \sup_{(m,v)\in\mathcal{A}}\| m \|_{\infty,2}^2 = \sup_{(m,w)\in \tilde{\mathcal{A}}}\| m \|_{\infty,2}^2.
\end{equation}
By Lemma \ref{lemma:cont_FP}, there exists a constant $C_3>0$ such that for any $(m_1,v_1)$ and $ (m_2,v_2)$ in $\mathcal{A}$,
\begin{equation}\label{eq:energy2}
     \| m_1 - m_2\|^2_{\infty,2} \leq C_3\Delta t \sum_{t\in \mathcal{T}}\sum_{x\in S} \|(v_1 - v_2)m_1(t,x)\|^2. 
\end{equation}
We next introduce three other constants, $D_1$, $D_2$, and $c$, defined by:
\begin{equation}\label{eq:C}
\begin{split}
      D_1 = C_1 L_f |S|^{1/2}, \quad  D_2 = (L_f + |S|^{1/2}) \sqrt{\frac{2C_2 C_3}{\alpha}},\quad   c = \max\left\{1 - \frac{\alpha}{4C_2C_3L_f|S|^{1/2}} \, ,\, \frac{1}{2}\right\}.
\end{split}
\end{equation}

\begin{prop} \label{prop:convergence1}
We consider the sequence $(m_k,w_k)_{k \geq 1}$ generated by Algorithm \eqref{alg1}. 
\begin{enumerate}
\item \emph{Sublinear rate.} Assume that $\lambda_k = 2/(k+2)$, for all $k \geq 1$. Then,
\begin{equation} \label{eq:conv1_2}
\tilde{J}(m^k,w^k) - 
\tilde{J}(\bar{m},\bar{w})
\leq \frac{ 8D_1}{k}, \qquad \forall k\geq 1.
\end{equation}
\item \emph{Linear rate.} Assume that
\begin{equation}\label{eq:lambda}
\lambda_k =  \min\left\{\frac{\tilde{J}_{m^k}(m^k,w^k) - \tilde{J}_{m^k}(\bar{m}^k,\bar{w}^k)}{L_f |S|^{1/2}\|m^k-\bar{m}^k\|_{\infty,2}^2} \, , \, 1\right\},
\end{equation}
for all $k \geq 1$. Then,
\begin{equation}\label{eq:conv2_2}
\tilde{J}(m^k,w^k) - 
\tilde{J}(\bar{m},\bar{w}) \leq 4 D_1 c^{k}, \qquad \forall k \geq 1.
\end{equation}
\end{enumerate}
\end{prop}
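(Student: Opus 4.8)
The plan is to reduce both rates to a single per-iteration descent inequality $r_{k+1}\le r_k-\lambda_k g_k+\frac{\lambda_k^2}{2}M_k$, where $r_k:=\tilde{J}(m^k,w^k)-\tilde{J}(\bar m,\bar w)$ is the optimality gap, $g_k:=\tilde{J}_{m^k}(m^k,w^k)-\tilde{J}_{m^k}(\bar m^k,\bar w^k)\ge 0$ is the Frank--Wolfe gap, and $M_k:=L_f|S|^{1/2}\|\bar m^k-m^k\|_{\infty,2}^2$. Two facts feed the analysis. First, $r_k\le g_k$: Lemma \ref{lemma:convJ} with $m_1=m^k$, $m_2=\bar m$ gives $r_k\le \tilde{J}_{m^k}(m^k,w^k)-\tilde{J}_{m^k}(\bar m,\bar w)$, and since $(\bar m^k,\bar w^k)=\br(m^k)$ minimizes $\tilde{J}_{m^k}$ over $\tilde{\mathcal{A}}$ (Lemma \ref{lm:fund}) while $(\bar m,\bar w)\in\tilde{\mathcal{A}}$, the right-hand side is at most $g_k$. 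Second, $M_k\le 4D_1$: both iterates are feasible, so $\|m^k\|_{\infty,2}^2,\|\bar m^k\|_{\infty,2}^2\le C_1$ by \eqref{eq:def_C_12bis}, the triangle inequality gives $\|\bar m^k-m^k\|_{\infty,2}^2\le 4C_1$, and hence $M_k\le 4C_1 L_f|S|^{1/2}=4D_1$ by \eqref{eq:C}.

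To obtain the descent inequality I would split $\tilde{J}=A+\Phi$, where $A(m,w):=\Delta t\sum_{t,x}\tilde{\ell}[m,w](t,x)+\sum_x g(x)m(T,x)$ is jointly convex (Lemma \ref{lm:conv}) and $\Phi(m):=\Delta t\sum_t F(t,m(t))$ collects the coupling energy, so that $\tilde{J}_{m'}=A+\Phi_{m'}$ with $\Phi_{m'}(m):=\Delta t\sum_{t,x}f(t,x,m'(t))m(t,x)$ affine in $m$. The crux is a descent lemma for $\Phi$: rewriting $F(t,m_2)-F(t,m_1)-\sum_x f(t,x,m_1)(m_2-m_1)(x)$ through the integral of Assumption \ref{ass3} and bounding it by the Lipschitz estimate of Assumption \ref{ass1}, I obtain $F(t,m_2)\le F(t,m_1)+\sum_x f(t,x,m_1)(m_2-m_1)(x)+\frac{L_f|S|^{1/2}}{2}\|m_2-m_1\|_2^2$; the factor $|S|^{1/2}$ appears because the Lipschitz modulus is uniform in $x$, so $\|f(t,\cdot,m_2)-f(t,\cdot,m_1)\|_2\le L_f|S|^{1/2}\|m_2-m_1\|_2$. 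Along the segment $(m^{k+1},w^{k+1})=(1-\lambda_k)(m^k,w^k)+\lambda_k(\bar m^k,\bar w^k)$, convexity of $A$, affineness of $\Phi_{m^k}$, and $\Delta t\sum_t\|\cdot(t)\|_2^2\le\|\cdot\|_{\infty,2}^2$ combine to give exactly $r_{k+1}\le r_k-\lambda_k g_k+\frac{\lambda_k^2}{2}M_k$. Taking the full first step $\lambda_0=1$ together with $r_0\le g_0$ yields the base bound $r_1\le\frac12 M_0\le 2D_1$.

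For the sublinear rate, $g_k\ge r_k$ and $M_k\le 4D_1$ turn the descent inequality into $r_{k+1}\le(1-\lambda_k)r_k+2D_1\lambda_k^2$; with $\lambda_k=2/(k+2)$ I prove $r_k\le 8D_1/k$ by induction, the inductive step reducing to the elementary inequality $(k+3)(k+1)\le(k+2)^2$ and the base case being $r_1\le 2D_1\le 8D_1$.

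For the linear rate the key additional input is the bound $M_k\le\beta g_k$ with $\beta:=\frac{2C_2C_3L_f|S|^{1/2}}{\alpha}$. I would chain three estimates: (i) strong convexity \eqref{eq:fundamental_ineq} at $m'=m^k$, giving $g_k\ge\frac{\alpha}{2}\Delta t\sum_{t,x}\|(v^k-\bar v^k)(t,x)\|^2 m^k(t,x)$, where $\bar v^k=\v\circ\hjb(m^k)$ and $w^k=m^k v^k$; (ii) the bound $\|m^k\|_{\infty,\infty}\le C_2$ (since $m^k$ is a convex combination of best responses $\fp\circ\v\circ\hjb(m^j)$), which allows passage from the weight $(m^k)^2$ to $m^k$; and (iii) the energy estimate \eqref{eq:energy2}, $\|m^k-\bar m^k\|_{\infty,2}^2\le C_3\Delta t\sum_{t,x}\|(v^k-\bar v^k)m^k(t,x)\|^2$. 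Together these yield $\|m^k-\bar m^k\|_{\infty,2}^2\le\frac{2C_2C_3}{\alpha}g_k$, hence $M_k\le\beta g_k$. With the clipped stepsize \eqref{eq:lambda}, $\lambda_k=\min\{g_k/M_k,1\}$, two cases close the argument: if $g_k\le M_k$ then $r_{k+1}\le r_k-\frac{g_k^2}{2M_k}\le r_k\bigl(1-\frac{1}{2\beta}\bigr)=r_k\bigl(1-\frac{\alpha}{4C_2C_3L_f|S|^{1/2}}\bigr)$, using $g_k\ge r_k$; if $g_k>M_k$ then $\lambda_k=1$ and $r_{k+1}\le r_k-\frac12 g_k\le\frac12 r_k$. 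In both cases $r_{k+1}\le c\,r_k$ with $c$ as in \eqref{eq:C}, so $r_k\le c^{k-1}r_1\le\frac{2D_1}{c}c^k\le 4D_1 c^k$ since $c\ge\frac12$. I expect the main obstacle to be precisely this chain for $M_k\le\beta g_k$: reconciling the weighting $\|(v^k-\bar v^k)m^k\|^2$ of the energy estimate with the weighting $\|v^k-\bar v^k\|^2 m^k$ produced by strong convexity (which is exactly where the uniform density bound $C_2$ enters), and tracking the factor $|S|^{1/2}$ consistently from the descent lemma through to the definition of $c$.
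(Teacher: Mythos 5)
Your proposal is correct and follows essentially the same route as the paper: the descent inequality $r_{k+1}\le r_k-\lambda_k g_k+\tfrac{\lambda_k^2}{2}M_k$ is the paper's \eqref{eq:classical_bound} (obtained via Lemmas \ref{lm:F} and \ref{lemma:convJ}), the bounds $M_k\le 4D_1$ and $M_k\le\beta g_k$ are Lemmas \ref{lm:m^k} and \ref{lm:FW} (the latter via the same chain of strong convexity, the $\ell^\infty$ bound $C_2$, and the energy estimate $C_3$), and the two-case analysis of the clipped stepsize matches the paper's proof verbatim up to notation. The only cosmetic difference is that you spell out the sublinear induction where the paper cites \cite{Jaggi2013}.
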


The following subsection is dedicated to the proof of Proposition \ref{prop:convergence1}. The used stepsize rule \eqref{eq:lambda} is motivated in the proof of convergence (see \eqref{eq:justification}).

\subsection{Convergence analysis} \label{sec:convergence}

\begin{lem}[A priori bounds]\label{lm:m^k}
For any $k\geq 1$, we have $(m^k,w^k) \in \tilde{\mathcal{A}}$. As a consequence,
\begin{equation} \label{eq:upper_bound}
\|m^k\|_{\infty,2}^2 \leq C_1 \quad
\text{and} \quad
\|m^k\|_{\infty,\infty}\leq C_2.
\end{equation}
\end{lem}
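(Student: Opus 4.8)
The plan is to establish the two claimed bounds by first proving the membership $(m^k, w^k) \in \tilde{\mathcal{A}}$ for all $k \geq 1$ by induction, and then deriving the norm estimates as immediate consequences using the definitions of $C_1$ and $C_2$ in \eqref{eq:def_C_12} together with the characterization \eqref{eq:def_C_12bis}.

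For the induction, the base case is the first iteration: by construction $(m^1, w^1) = \br(m^0)$, and the best-response mapping $\br$ is valued in $\tilde{\mathcal{A}}$ by its very definition (it produces $\tilde{m} = \fp(\tilde v)$ and $\tilde w = \tilde m \tilde v$ with $\tilde v = \v \circ \hjb(m^0)$, which lands in $\tilde{\mathcal{A}}$ precisely because of the argument used in Lemma \ref{lm:fund}). For the inductive step, suppose $(m^k, w^k) \in \tilde{\mathcal{A}}$. In Step 1 of Algorithm \ref{alg1} we set $(\bar m^k, \bar w^k) = \br(m^k) \in \tilde{\mathcal{A}}$, again by the codomain of $\br$. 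The update in Step 2 then defines $(m^{k+1}, w^{k+1})$ as the convex combination $(1-\lambda_k)(m^k, w^k) + \lambda_k(\bar m^k, \bar w^k)$ with $\lambda_k \in [0,1]$. Since $\tilde{\mathcal{A}}$ is convex by Lemma \ref{lm:conv}, this convex combination lies in $\tilde{\mathcal{A}}$, which closes the induction.

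Having established $(m^k, w^k) \in \tilde{\mathcal{A}}$, the two inequalities in \eqref{eq:upper_bound} follow directly. For the first, the identity \eqref{eq:def_C_12bis} states that $C_1 = \sup_{(m,w) \in \tilde{\mathcal{A}}} \|m\|_{\infty,2}^2$, so taking the particular element $(m^k, w^k) \in \tilde{\mathcal{A}}$ gives $\|m^k\|_{\infty,2}^2 \leq C_1$. For the second, membership in $\tilde{\mathcal{A}}$ means there exists $v^k$ with $(m^k, v^k) \in \mathcal{A}$ and $m^k = \fp(v^k)$; moreover since $m^k$ is a convex combination of distributions of the form $\fp \circ \v \circ \hjb(\cdot)$, one can bound $\|m^k\|_{\infty,\infty}$ by the supremum defining $C_2$. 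The cleanest route is to observe that $C_2 = \sup_m \|\fp \circ \v \circ \hjb(m)\|_{\infty,\infty}$ bounds each best-response iterate $\bar m^j = \fp \circ \v \circ \hjb(m^j)$, and since the $\|\cdot\|_{\infty,\infty}$-norm is convex, the convex combinations producing $m^k$ inherit the same bound $\|m^k\|_{\infty,\infty} \leq C_2$.

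I do not anticipate a genuine obstacle here; the result is a bookkeeping lemma whose content is entirely the invariance of $\tilde{\mathcal{A}}$ under the Frank-Wolfe update together with the definitions of the constants. The only point requiring a small amount of care is the second estimate $\|m^k\|_{\infty,\infty} \leq C_2$: the constant $C_2$ is defined via the composed best-response map rather than directly over $\tilde{\mathcal{A}}$, so one must argue that the convex combinations do not increase the $\|\cdot\|_{\infty,\infty}$-norm beyond the bound on the extreme best-response iterates. This is immediate from convexity of the norm, but it is the one place where the argument uses more than the raw definition \eqref{eq:def_C_12bis}.
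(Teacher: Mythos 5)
Your proof is correct and follows essentially the same route as the paper: the paper likewise observes that each $(\bar{m}^{\kappa},\bar{w}^{\kappa})$ lies in $\tilde{\mathcal{A}}$ and satisfies the two norm bounds by \eqref{eq:def_C_12}--\eqref{eq:def_C_12bis}, that $(m^k,w^k)$ is a convex combination of these best-response iterates, and concludes via the convexity of $\tilde{\mathcal{A}}$ (Lemma \ref{lm:conv}) and the triangle inequality. Your extra care on the $\|\cdot\|_{\infty,\infty}$ bound — noting that $C_2$ is defined through the composed map $\fp\circ\v\circ\hjb$ rather than as a supremum over $\tilde{\mathcal{A}}$, so one must pass through the convex combination of best responses — matches exactly what the paper's appeal to the triangle inequality is doing.
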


\begin{proof}
Using the definitions of $C_1$ and $C_2$ (given in \eqref{eq:def_C_12}) and using \eqref{eq:def_C_12bis}, we have that
\begin{equation} \label{eq:new_inequality}
\| \bar{m}^k \|_{\infty,2}^2 \leq C_1,
\| \bar{m}^k \|_{\infty,\infty} \leq C_2,
\quad \text{and} \quad
(\bar{m}^k,\bar{w}^k) \in \tilde{\mathcal{A}},
\end{equation}
for any $k \geq 1$. To conclude the proof of the lemma, it suffices to observe that for any $k \geq 1$, $({m}^k, {w}^k)$ is a convex combination of $(\bar{m}^{\kappa}, \bar{w}^{\kappa}) \in \tilde{\mathcal{A}}$ for $0\leq \kappa \leq k-1$. Then we have $(m^k,w^k) \in \tilde{\mathcal{A}}$, since $\tilde{\mathcal{A}}$ is convex, by Lemma \ref{lm:conv}. Inequality \eqref{eq:upper_bound} follows from \eqref{eq:new_inequality} and from the triangle inequality.
\end{proof}

\begin{lem} \label{lm:F}
For any $t\in \mathcal{T}$ and for any $m_1$ and $m_2$ in $\mathcal{P}(S)$, we have
\begin{equation}
F(t,m_2) \leq F(t,m_1) + \sum_{x\in S} f(t,x, m_1 ) (m_2(x) - m_1(x)) + \frac{L_f|S|^{1/2}}{2} \|m_2-m_1\|_{2}^2. \label{eq:quadratic}
\end{equation}
\end{lem}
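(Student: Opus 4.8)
The plan is to prove Lemma \ref{lm:F} as the natural upper-bound companion to the lower bound \eqref{eq:linear} of Lemma \ref{lm:F_bis}: both start from the same integral representation of $F$, but where Lemma \ref{lm:F_bis} discards a nonnegative integrand using monotonicity, here I would instead estimate that integrand from above using the Lipschitz regularity of $f$. Writing $m_s \coloneqq m_1 + s(m_2 - m_1)$ for $s\in[0,1]$ and using the integral representation of $F$ underlying Assumption \ref{ass3} (the same one used in the proof of Lemma \ref{lm:F_bis}), one has
\[
F(t,m_2) - F(t,m_1) = \int_0^1 \sum_{x\in S} f(t,x,m_s)\,(m_2(x) - m_1(x))\,\dd s.
\]
Subtracting the linear term $\sum_{x\in S} f(t,x,m_1)(m_2(x) - m_1(x)) = \int_0^1 \sum_{x\in S} f(t,x,m_1)(m_2(x)-m_1(x))\,\dd s$ from both sides reduces the claim to proving that
\[
\int_0^1 \sum_{x\in S} \big(f(t,x,m_s) - f(t,x,m_1)\big)(m_2(x) - m_1(x))\,\dd s \;\leq\; \frac{L_f |S|^{1/2}}{2}\,\|m_2-m_1\|_2^2 .
\]

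For the estimation of the integrand at a fixed $s$, I would apply the Cauchy--Schwarz inequality (the case $p_1=p_2=2$ of the H\"older lemma stated above) in $\ell^2(S)$, bounding the sum by its absolute value:
\[
\sum_{x\in S} \big(f(t,x,m_s) - f(t,x,m_1)\big)(m_2(x) - m_1(x))
\leq \Big(\sum_{x\in S}\big|f(t,x,m_s) - f(t,x,m_1)\big|^2\Big)^{1/2}\,\|m_2-m_1\|_2 .
\]
The crucial---and only slightly delicate---point is the control of the first factor. The regularity hypothesis (item 2 of Assumption \ref{ass1}) furnishes a Lipschitz bound that holds \emph{for each fixed} $x$, namely $|f(t,x,m_s) - f(t,x,m_1)| \leq L_f\|m_s - m_1\|_2 = L_f\,s\,\|m_2-m_1\|_2$. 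Summing this pointwise estimate over the $|S|$ elements of $S$ gives $\big(\sum_{x\in S}|f(t,x,m_s) - f(t,x,m_1)|^2\big)^{1/2} \leq |S|^{1/2} L_f\, s\, \|m_2-m_1\|_2$, which is precisely where the dimensional factor $|S|^{1/2}$ originates.

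Combining the last two displays yields the bound $|S|^{1/2} L_f\, s\, \|m_2-m_1\|_2^2$ for the integrand at parameter $s$; integrating over $s\in[0,1]$ and using $\int_0^1 s\,\dd s = 1/2$ produces the constant $\tfrac12 L_f|S|^{1/2}$ and closes the argument. I do not expect any genuine obstacle---this is the standard ``Lipschitz gradient implies quadratic upper model'' (descent-lemma) computation---but the step deserving care is the passage from the pointwise-in-$x$ Lipschitz estimate of Assumption \ref{ass1} to an $\ell^2(S)$-estimate of the whole vector $(f(t,x,m_s) - f(t,x,m_1))_{x\in S}$, since that is exactly the place where the factor $|S|^{1/2}$ enters and must not be dropped.
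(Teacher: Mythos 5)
Your proof is correct and follows essentially the same route as the paper: the integral representation of $F$ from Assumption~\ref{ass3}, the Lipschitz bound $|f(t,x,m_s)-f(t,x,m_1)|\leq L_f s\|m_2-m_1\|_2$, and a norm-comparison step producing the factor $|S|^{1/2}$. The only cosmetic difference is that you apply Cauchy--Schwarz to the vector of $f$-differences, whereas the paper bounds the sum directly and then uses $\|m_2-m_1\|_1\leq |S|^{1/2}\|m_2-m_1\|_2$; both yield the same constant.
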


\begin{proof}
Combining Assumption \ref{ass3} and the Lipschitz-continuity of $f$ (Assumption \ref{ass1}), we deduce that
\begin{equation*}
F(t,m_2) \leq F(t,m_1) + \sum_{x\in S} f(t,x, m_1 ) (m_2(x) - m_1(x)) + \frac{L_f}{2} \|m_2-m_1\|_{2}\|m_2 - m_1\|_{1}.
\end{equation*}
By H\"older's inequality, we have that $\|m_2-m_1\|_1\leq |S|^{1/2} \|m_2-m_1\|_2$. Inequality \eqref{eq:quadratic} follows.
\end{proof}

Algorithm \ref{alg1} generates two sequences $(m^k,w^k)_{k \geq 0}$ and $(\bar{m}^k,\bar{w}^k)_{k \geq 0}$ in $\tilde{\mathcal{A}}$. For the analysis, we need to fix two sequences $(v^k)_{k \geq 0}$ and $(\bar{v}^k)_{k \geq 0}$ such that $w^k=m^k v^k$ and $\bar{w}^k=\bar{m}^k \bar{v}^k$. We introduce the following six sequences of positive numbers:
\begin{align*}
& \delta_k = \|m^k-\bar{m}\|_{\infty,2}^2,
& \bdel_k & = \|m^k-\bar{m}^k\|_{\infty,2}^2;\\[0.8em]
& \gamma_k = \tilde{J}(m^k,w^k) - \tilde{J}(\bar{m},\bar{w}), \qquad
& \bga_k & = \tilde{J}_{m^k}(m^k,w^k) - \tilde{J}_{m^k}(\bar{m}^k,\bar{w}^k);\\[0.8em]
& \epsilon_k = \Delta t \sum_{t\in \mathcal{T}}\sum_{x\in S} \|v^k - \bar{v}\|^2m^{k}(t,x), &
\beps_k & = \Delta t \sum_{t\in \mathcal{T}}\sum_{x\in S} \|v^k - \bar{v}^k\|^2m^{k}(t,x).
\end{align*}
The following lemma establishes various relationships between these sequences, independent of the choice of stepsize $(\lambda_k)_{k \geq 1}$. For convenience, we fix $\lambda_0= 1$ and make use of the fact that $(m^1,w^1)= (1-\lambda_0)(m^0,w^0) + \lambda_0 (\bar{m}^0,\bar{w}^0)$.

\begin{lem} \label{lm:FW}
For any choice of stepsizes $(\lambda_k)_{k \geq 1}$, we have $\bdel_k \leq 4C_1$, for any $k \geq 1$. Moreover we have that
\begin{equation}  \label{eq:classical_bound}
\gamma_k \leq \bga_k \qquad \text{and} \qquad
 \gamma_{k+1} \leq \gamma_k -\lambda_k \bga_k + \lambda_k^2 \frac{L_f|S|^{1/2}}{2} \bdel_k.
\end{equation}
We also have the following estimates:
\begin{equation} \label{eq:stab_bound}
\frac{\alpha}{2C_2C_3} \delta_k \leq \frac{\alpha}{2} \epsilon_k \leq \gamma_k \qquad \text{and} \qquad
\frac{\alpha}{2C_2C_3} \bdel_k \leq \frac{\alpha}{2} \beps_k \leq \bga_k.
\end{equation}
\end{lem}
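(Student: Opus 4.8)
The plan is to prove the five assertions one at a time, each resting on one of the structural lemmas established above. The bound $\bdel_k \leq 4C_1$ is immediate from the a priori estimates: by Lemma \ref{lm:m^k} we have $\|m^k\|_{\infty,2}^2 \leq C_1$, while \eqref{eq:new_inequality} gives $\|\bar{m}^k\|_{\infty,2}^2 \leq C_1$; the triangle inequality then yields $\|m^k-\bar{m}^k\|_{\infty,2} \leq 2\sqrt{C_1}$, and squaring gives the claim. For the inequality $\gamma_k \leq \bga_k$, I would apply Lemma \ref{lemma:convJ} with $(m_1,w_1)=(m^k,w^k)$ and $(m_2,w_2)=(\bar{m},\bar{w})$ to get $\gamma_k \leq \tilde{J}_{m^k}(m^k,w^k)-\tilde{J}_{m^k}(\bar{m},\bar{w})$, and then invoke the optimality of $(\bar{m}^k,\bar{w}^k)$ for $\tilde{J}_{m^k}$ over $\tilde{\mathcal{A}}$ (Lemma \ref{lm:fund}), which gives $\tilde{J}_{m^k}(\bar{m}^k,\bar{w}^k) \leq \tilde{J}_{m^k}(\bar{m},\bar{w})$ since $(\bar{m},\bar{w}) \in \tilde{\mathcal{A}}$.

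The descent estimate in \eqref{eq:classical_bound} is the heart of the Frank--Wolfe analysis. I would split $\tilde{J} = \tilde{J}_{m^k} + R_k$, where $R_k(m) = \Delta t \sum_t [F(t,m(t)) - \sum_x f(t,x,m^k(t))m(t,x)]$ isolates the nonlinearity. Since $\tilde{J}_{m^k}$ is convex (sum of the convex term $\tilde{\ell}$ and terms linear in $m$) and $(m^{k+1},w^{k+1})$ is the convex combination $(1-\lambda_k)(m^k,w^k) + \lambda_k(\bar{m}^k,\bar{w}^k)$, convexity gives $\tilde{J}_{m^k}(m^{k+1},w^{k+1}) - \tilde{J}_{m^k}(m^k,w^k) \leq -\lambda_k \bga_k$. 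For the remainder, the quadratic upper bound of Lemma \ref{lm:F} applied at each $t$ with $m^{k+1}(t)-m^k(t) = \lambda_k(\bar{m}^k(t)-m^k(t))$ bounds $R_k(m^{k+1}) - R_k(m^k)$ by $\frac{L_f|S|^{1/2}}{2}\lambda_k^2 \Delta t \sum_t \|\bar{m}^k(t)-m^k(t)\|_2^2$. The key bookkeeping step is that $\Delta t \sum_{t\in\mathcal{T}}$ of a quantity is bounded by its maximum over $t$, because $\Delta t\,|\mathcal{T}| = T\Delta t = 1$, so this sum is $\leq \bdel_k$; adding the two contributions gives \eqref{eq:classical_bound}.

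For the stability bounds \eqref{eq:stab_bound}, the two left inequalities have an identical form. Applying the energy estimate \eqref{eq:energy2} to $(m^k,v^k)$ and $(\bar{m}^k,\bar{v}^k)$ gives $\bdel_k \leq C_3 \Delta t \sum_{t,x}\|(v^k-\bar{v}^k)(t,x)\|^2 m^k(t,x)^2$, and since $m^k(t,x)^2 \leq \|m^k\|_{\infty,\infty}\, m^k(t,x) \leq C_2\, m^k(t,x)$ by Lemma \ref{lm:m^k}, the right-hand side is $\leq C_2 C_3 \beps_k$, i.e.\ $\frac{\alpha}{2C_2C_3}\bdel_k \leq \frac{\alpha}{2}\beps_k$; the same computation with $(\bar{m},\bar{v})$ in place of $(\bar{m}^k,\bar{v}^k)$ gives the bound on $\delta_k$. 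The right inequality $\frac{\alpha}{2}\beps_k \leq \bga_k$ is exactly the fundamental inequality \eqref{eq:fundamental_ineq} of Lemma \ref{lm:fund} with $m'=m^k$ and $(m,w)=(m^k,w^k)$.

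The one genuinely delicate point is the remaining inequality $\frac{\alpha}{2}\epsilon_k \leq \gamma_k$, since $\epsilon_k$ compares $v^k$ with the global optimizer $\bar{v}$ while $\gamma_k$ is an objective gap rather than a linearized gap. Here I would use that $(\bar{m},\bar{w})$ is the best response to itself (it is a fixed point of $\br$, being the solution of \eqref{eq:dmfg}), so \eqref{eq:fundamental_ineq} applies with $m'=\bar{m}$, $\tilde{v}=\bar{v}$ and $(m,w)=(m^k,w^k)$, yielding $\tilde{J}_{\bar{m}}(m^k,w^k)-\tilde{J}_{\bar{m}}(\bar{m},\bar{w}) \geq \frac{\alpha}{2}\epsilon_k$; then Lemma \ref{lemma:convJ} with $m_1=\bar{m}$ gives $\gamma_k \geq \tilde{J}_{\bar{m}}(m^k,w^k)-\tilde{J}_{\bar{m}}(\bar{m},\bar{w})$, chaining to the desired bound. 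I expect this detour through the self-linearization $\tilde{J}_{\bar{m}}$ to be the main obstacle, as it is the only step where global optimality and the potential (rather than linearized) structure must be combined.
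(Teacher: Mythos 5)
Your proposal is correct and follows essentially the same route as the paper's proof: the a priori bounds of Lemma \ref{lm:m^k} for $\bdel_k \leq 4C_1$, optimality of the best response plus Lemma \ref{lemma:convJ} for $\gamma_k \leq \bga_k$, convexity of the linearized cost together with the quadratic bound of Lemma \ref{lm:F} for the descent estimate, and the combination of \eqref{eq:energy2}, the $\ell^\infty$-bound $m^k(t,x)\leq C_2$, the fundamental inequality \eqref{eq:fundamental_ineq}, and the fixed-point property $\br(\bar m)=(\bar m,\bar w)$ for \eqref{eq:stab_bound}. The only (cosmetic) difference is that you package the descent step as a splitting $\tilde J = \tilde J_{m^k} + R_k$, whereas the paper applies the pointwise convexity of $\tilde\ell$ and Lemma \ref{lm:F} directly; the underlying computation is identical.
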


\begin{proof}
\textbf{Step 1.}
The inequality $\bar{\delta}_k = \|m^k-\bar{m}^k\|_{\infty,2}^2 \leq 4 C_1$ follows from the bounds $\|m^k \|_{\infty,2}^2 \leq C_1$ and $\|\bar{m}^k\|_{\infty,2}^2 \leq C_1$ obtained in Lemma \ref{lm:m^k}.

\textbf{Step 2.}
We next prove that $\gamma_k \leq \bar{\gamma}_k$. Recalling that $(\bar{m}^k,\bar{w}^k)$ minimizes $\tilde{J}_{m^k}(\cdot)$ over $\tilde{\mathcal{A}}$, we obtain that
\begin{equation*}
    \bga_k  =   \tilde{J}_{m^k}(m^k,w^k) - \tilde{J}_{m^k}(\bar{m}^k,\bar{w}^k) \geq \tilde{J}_{m^k}(m^k,w^k) - \tilde{J}_{m^k}(\bar{m},\bar{w}).
\end{equation*}
Using next Lemma \ref{lemma:convJ}, we deduce that
$\bga_k \geq J(m^k,w^k) - J(\bar{m},\bar{w})
 = \gamma_k$.
Let us prove the upper bound of $\gamma_{k+1}$. Since $\tilde{\ell}$ is convex (see the proof of Lemma \ref{lm:conv}), we have
\begin{equation*}
    \tilde{\ell}[m^{k+1},w^{k+1}](t,x) \leq (1-\lambda_k)\tilde{\ell}[m^k,w^k](t,x)  + \lambda_k \tilde{\ell}[\bar{m}^k,\bar{w}^k](t,x) .
\end{equation*}
Moreover, by Lemma \ref{lm:F}, we have
\begin{equation*}
      F(t,m^{k+1}(t)) \leq  F(t,m^k(t)) + \lambda_k\sum_{x\in S} f(t,x, m^k(t) ) (\bar{m}^k(t,x) - m^k(t,x)) + \frac{\lambda_k^2L_f|S|^{1/2}}{2} \bdel_k.
\end{equation*}
Then \eqref{eq:classical_bound} follows from the above two inequalities and the definitions of $\tilde{J}$, $\tilde{J}_{m^k}$, $\gamma_k$, and $\bar{\gamma}_k$.

\textbf{Step 3.} Let us prove \eqref{eq:stab_bound}. From the definition of $C_3$ and Lemma \ref{lm:m^k} we have that
\begin{align*}
\delta_k \leq {} & C_3 \delta t \sum_{t \in \mathcal{T}}
\sum_{x \in S} \| (v_k - \bar{v})(t,x) m_k(t,x) \|^2 \\
\leq {} & C_2 C_3 \delta t \sum_{t \in \mathcal{T}}
\sum_{x \in S} \| (v_k - \bar{v})(t,x) \|^2 m_k(t,x) = C_2 C_3 \epsilon_k.
\end{align*}
Moreover, using Lemmas \ref{lemma:convJ} and \ref{lm:fund}, we obtain that
\begin{equation*}
\gamma_k \geq \tilde{J}_{\bar{m}}(m^k) - \tilde{J}_{\bar{m}}(\bar{m})
\geq \frac{\alpha}{2} \epsilon_k.
\end{equation*}
We prove in a similar fashion that $\bar{\delta}_k \leq C_2 C_3 \bar{\epsilon}_k$ and that $\bar{\gamma}_k \geq \frac{\alpha}{2} \bar{\epsilon}_k$.
\end{proof}

\begin{lem}\label{lm:u_m}
In Algorithm \ref{alg1}, let $u^k =  \textnormal{\hjb}(m^k)$. Then, whatever the choice of stepsizes $(\lambda_k)_{k \geq 1}$, we have
\begin{equation*}
\|u^k - \bar{u}\|_{\infty,\infty} + \|m^k - \bar{m}\|_{\infty,1}  \leq D_2\sqrt{\gamma_k}.
\end{equation*}
\end{lem}

\begin{proof}
By Cauchy-Schwarz inequality and Lemma \ref{lm:FW}, we have
\begin{equation*}
\| m^k - \bar{m} \|_{\infty,1}
\leq \sqrt{|S|} \sqrt{\delta_k} \leq \sqrt{|S|} \sqrt{\frac{2C_2C_3}{\alpha}} \sqrt{\gamma_k}.
\end{equation*}
By Lemma \ref{lm:hjb_stab}, we have
$\| u^k - \bar{u} \|_{\infty,\infty}
\leq L_f \sqrt{\delta_k}$. Recalling the definition of $D_2$, we obtain the announced result.
\end{proof}

\begin{proof}[Proof of Proposition \ref{prop:convergence1}]
\textbf{Sublinear case}.
Combining the two inequalities in \eqref{eq:classical_bound} and using the bound $\bar{\delta}_k \leq 4C_1$, we obtain that
\begin{equation*}
\gamma_{k+1} 
\leq (1-\lambda_k)\gamma_k + 2\lambda_k^2 C_1L_f|S|^{1/2}
=  (1-\lambda_k)\gamma_k + 2\lambda_k^2 D_1.
\end{equation*}
In particular, for $k=0$, we have $\lambda_0=1$, therefore $\gamma_1 \leq 2D_1$. It is then easy to prove by induction the inequality $\gamma_k \leq 8D_1/k$ (see \cite[Thm.\@ 1]{Jaggi2013} for example).

\textbf{Linear case}. Note first that the stepsize rule \eqref{eq:lambda} writes
\begin{equation*}
\lambda_k =  \min\left\{ \frac{\bar{\gamma}_k}{L_f |S|^{1/2} \bar{\delta}_k } \, , \, 1\right\}.
\end{equation*}
It is easy to verify that $\lambda_k$ minimizes the upper-bound \eqref{eq:classical_bound}, that is to say:
\begin{equation} \label{eq:justification}
\lambda_k = \underset{\lambda \in [0,1]}{\text{argmin}}
-\lambda \bar{\gamma}_k + \lambda^2 \frac{L_f |S|^{1/2}}{2\bar{\delta}_k}.
\end{equation}
Let $k\geq 1$. We consider the following two cases:
\begin{enumerate}
\item If $\bar{\gamma}_k \geq L_f |S|^{1/2}\bar{\delta}_k$, then $\lambda_k = 1$. We deduce from \eqref{eq:classical_bound} that
\begin{equation*}
\gamma_{k+1} \leq \gamma_k - \bga_k + \frac{L_f|S|^{1/2}}{2} \bdel_k \leq \gamma_k - \frac{\bga_k}{2} \leq \frac{\gamma_k}{2} \leq c \gamma_k,
\end{equation*}
where the second inequality follows from the assumption $\bar{\gamma}_k \geq L_f |S|^{1/2}\bar{\delta}_k$, the third one from the inequality $\gamma_k \leq \bar{\gamma}_k$ and the last one from the fact that $c \geq 1/2$.
\item If $\bar{\gamma}_k \leq L_f |S|^{1/2}\bar{\delta}_k $, then
$\lambda_k= \frac{\bar{\gamma}_k}{L_f |S|^{1/2} \bar{\delta}_k }$.
We deduce from \eqref{eq:classical_bound} and from the inequality $\gamma_k \leq \bar{\gamma}_k$ that
\begin{equation*}
\gamma_{k+1}
\leq \gamma_k - \frac{\bar{\gamma}_k}{2 L_f |S|^{1/2}\bar{\delta}_k}\bga_k
\leq \left( 1 - \frac{\bar{\gamma}_k}{2L_f |S|^{1/2}\bar{\delta}_k} \right) \gamma_k.
\end{equation*}
By \eqref{eq:stab_bound}, we know that $\bar{\gamma}_k /\bar{\delta}_k \geq \frac{\alpha}{2C_2C_3}$. It follows that
\begin{equation*}
\gamma_{k+1}
\leq \left(1 - \frac{\alpha}{4C_2C_3L_f|S|^{1/2}}\right)\gamma_k \leq c\gamma_k.
\end{equation*}
\end{enumerate}
It follows that $\gamma_k \leq \gamma_1 c^{k-1}$ for all $k\geq 1$. Since $c^{-1} \leq 2$ and $\gamma_1 \leq 2D_1$, estimate \eqref{eq:conv2_2} holds true.
\end{proof}

\section{Mesh-independent convergence of the GFW algorithm for second-order MFGs}
\label{sec:mesh-indep}

We establish in this section two mesh-independence principles for the GFW algorithm, applied to a discretization of \eqref{eq:mfg} with the $\theta$-scheme proposed in \cite{BLP2022}. First, we recall the theta-scheme and the main convergence result of \cite{BLP2022}. Then, we show that the potential structure of the continuous \eqref{eq:mfg} is preserved by the theta-scheme at the discrete level. Therefore, Algorithm \ref{alg1} can be applied to the theta-scheme. The mesh-independence principles are stated in Theorems \ref{thm:main2} and \ref{thm:main3} and demonstrated in Subsections \ref{subsec:proof_sublin} and \ref{subsec:proof_lin}.

\subsection{The theta-scheme and error estimates} \label{section:theta}
 
In this subsection, we recall the theta-scheme of the continuous system \eqref{eq:mfg} investigated in \cite{BLP2022} and its main result.  Let us define
\begin{equation} \label{eq:def_D}
\mathcal{D} =  \Big\{ \mu \in \mathbb{L}^2(\mathbb{T}^d) \, \vert \, \mu\geq 0, \int_{\mathbb{T}^d} \mu(x) dx =1  \Big\}.
\end{equation}
We make the following assumptions on the data function $\ell^c \colon Q\times \mathbb{R}^d \to \R$, $g^c \colon \mathbb{T}^d \to \R$, $f^c \colon Q \times \mathcal{D} \to \R$, and $m^c \colon \mathbb{T}^d \to \R$ of the continuous model \eqref{eq:mfg}. 

\begin{ass}\label{ass:continuous}
\begin{enumerate}
\item \emph{Regularity}. The function $\ell^c$ is continuously differentiable with respect to $v$. There exist positive constants $L_{\ell}^c$, $L_g^c$, and $L_f^c$ such that for any $(t,x) \in Q$, for any $v \in \R^d$, and for any $m \in \mathcal{D}$,
\begin{itemize}
\item $\ell^c(\cdot,x,v)$, $\ell^c(t,\cdot,v)$, and $\ell_v^c(\cdot,x,v)$ are $L_{\ell}^c$-Lipschitz continuous
\item $g^c$ is $L_g^c$-Lipschitz continuous
\item $f^c(\cdot,x,m)$, $f^c(t,\cdot,m)$, and $f^c(t,x,\cdot)$ are $L_f^c$-Lipschitz continuous (with respect to the $\| \cdot \|_{\mathbb{L}^2}$-norm for the third variable).
\end{itemize}
\item \emph{Strong convexity}. There exists $\alpha^c > 0$ such that for any $(t,x) \in Q$, $\ell^c(t,x,\cdot)$ is strongly convex with modulus $\alpha^c$, i.e.
\begin{equation*}
 \ell^c(t,x,v_2) \geq \ell^c(t,x,v_1) + \langle \ell_v^c(t,x,v_1) , v_2-v_1\rangle + \frac{\alpha^c}{2}\|v_2-v_1\|^2,
 \quad
 \forall v_1, v_2 \in \R^d.
\end{equation*}
\item \emph{Monotonicity.} The function $f^c$ is monotone, i.e., for any $t \in [0,T]$, for any $m_1$ and $m_2 \in \mathcal{D}$,
\begin{equation*}
        \int_{ \mathbb{T}^d} \Big( f^c(t,x',m_1) - f^c(t,x',m_2)\Big) \big(m_1(x') - m_2(x')\big) dx' \geq 0.
\end{equation*}
\end{enumerate}
\end{ass}

\begin{ass}\label{ass:sol+}
The continuous mean field game \eqref{eq:mfg} admits a unique solution $(u^{*}, v^{*}, m^{*})$, with $u^{*}, m^{*} \in \mathcal{C}^{1+r/2,2+r}(Q) $ and $v^{*}\in \mathcal{C}^r(Q) \cap \mathbb{L}^{\infty}([0,1]; \mathcal{C}^{1+r}(\mathbb{T}^d))$, where $r\in (0,1)$.
\end{ass}

Note that more explicit assumptions on the problem data allows to verify Assumption \ref{ass:sol+}, see \cite[Appendix B]{BLP2022}.

The time and space discretization parameters are denoted $\Delta t>0$ and $h>0$. As in the discrete model, we suppose that $\Delta t = 1/T$ and that $h= 1/N$, for two integers $T$ and $N$.
The discrete state space is defined as
\begin{equation*}
S = \mathbb{T}^d/h\mathbb{Z}^d = \big\{ (i_1,i_2,\ldots,i_d) h \; \mid \;  i_1,\ldots,i_d \in \mathbb{Z}/N\mathbb{Z} \big\}.
\end{equation*}
Given $x\in \mathbb{T}^d$, let us set $B_h(x) = \prod_{i=1}^n [x-he_i/2,\, x +he_i/2)$. Given $y \in \mathbb{T}^d$, we denote by $x_h[y]$ the unique point $x$ in $S$ such that $y \in B_h(x)$.
We consider the mappings $\mathcal{I}_h \colon \mathbb{R}(\mathbb{T}^d) \to \mathbb{R}(S)$ and  $\mathcal{R}_h \colon \mathbb{R}(S) \rightarrow \mathbb{R}(\mathbb{T}^d)$, defined as follows: For any $m^c\in \mathbb{R}(\mathbb{T}^d)$ and for any $m \in \mathbb{R}(S)$, 
\begin{align*}
\mathcal{I}_h (m^c) (x) & = \int_{B_h(x)} m^c(y)dy,  \quad &&\forall x\in S;\\
\mathcal{R}_h (m) (y) & = \frac{m(x_h[y])}{h^d}, && \forall y \in \mathbb{T}^d.
\end{align*}
We consider the constant $M>0$, defined by
\begin{equation} \label{eq:cons_M}
M = \frac{1}{\alpha^c}
\Big(
2 \max_{(t,x) \in Q}
\| \ell^c_v(t,x,0) \|
+ \sqrt{d} (L_{\ell}^c + L_f^c + L_g^c)
\Big).
\end{equation}
Note that $M$ is independent of $\Delta t$ and $h$.
We define the truncated running cost $\hat{\ell}^c$ as follows:
\begin{equation*}
\hat{\ell}^c(t,x,v) = \begin{cases} \
\ell^c(t,x,v), \quad & \text{if }\|v\|\leq M, \\
\ +\infty, & \text{otherwise}.
\end{cases}
\end{equation*}
The discrete counterparts of $\ell^c$, $g^c$, $f^c$, and $m_0^c$ are defined as follows: For any $t \in \mathcal{T}$, for any $x \in S$, for any $v \in \R^d$, for any $m \in \mathcal{P}(S)$,
\begin{equation}\label{eq:grid}
\begin{array}{rl}
    \ell(t,x,p) = &\! \! \! \hat{\ell}^c(t\Delta t, x, v) \\[0.5em]
     g(x) = & \! \! \! g^c(x), \\[0.5em]
   f(t,x,m) =  & \! \! \! {\displaystyle \frac{1}{h^d}\int_{B_h(x)} } f^c\big(t\Delta t,y,\mathcal{R}_h(m) \big) dy, \\[1.2em]
   m_0(x) = &\! \! \! \mathcal{I}_h (m_0^c)(x)   .
\end{array}
\end{equation}
We denote by $H\colon \mathcal{T} \times S \times \R^d$ the associated Hamiltonian, defined by
\begin{equation} \label{eq:discrete_hamiltonian}
H (t,x, p) =\sup_{v \in \R^d} -\langle p ,v \rangle -\ell(t,x,v).
\end{equation}
 
Let us now discetize the differential operators appearing in the continuous MFG system. Let $(e_i)_{i=1,\ldots,d}$ denote the canonical basis of $\mathbb{R}^d$. The discrete Laplace, gradient and divergence operators for the centered finite-difference scheme are defined as follows:
\begin{align*}
    & \Delta_h \mu(x) = \sum_{i=1}^d \frac{  \mu(x+he_i) + \mu(x-he_i)  -2 \mu(x)}{h^2}, & \forall\;\mu\in \mathbb{R}(S), \; \forall \; x\in S,\\
    & \nabla_h \mu(x) =  \Big( \frac{\mu(x+he_i)- \mu(x-he_i)}{2h} \Big)_{i=1}^d, & \forall\;\mu\in \mathbb{R}(S), \; \forall \; x\in S,\\
    & \text{div}_h \mu (x) = \sum_{i=1}^d \frac{\mu_i(x+he_i) - \mu_i(x-he_i)}{2h}, & \forall \; \mu\in \mathbb{R}^d(S), \; \forall \; x \in S, 
\end{align*}
where $\mu_i$ is the $i$-th coordinate of $\mu$.

Finally, we fix $\theta \in (1/2,1)$. The theta-scheme for \eqref{eq:mfg}, introduced in \cite[Sec.\@ 2.3]{BLP2022}, writes:
\begin{equation}\label{eq:theta_mfg}\tag{Theta-mfg}
\begin{cases}
\begin{array}{cll}
\mathrm{(i)} \ & u =  \hjbt (m) ,\\[0.3em]
\mathrm{(ii)} \  & v = \vt (u),\\[0.3em]
\mathrm{(iii)} \ & m = \fpt (v),
\end{array}
\end{cases}
\end{equation}
where the Hamilton-Jacobi-Bellman mapping $\hjbt\colon \mathcal{P}(\tilde{\mathcal{T}},S) \to  \mathbb{R}(\tilde{\mathcal{T}}\times S),\, m\mapsto u $, is defined by
\begin{equation*}
        \begin{cases}
        \ -\frac{u(t+1,x) - u(t+1/2,x)}{\Delta t} - \theta \sigma \Delta_h u(t+1/2,x) = 0 , \\[0.6em]
         \ -\frac{u(t+1/2,x) - u(t,x)}{\Delta t} - (1-\theta)\sigma \Delta_h u(t+1/2, x) +  H [\nabla_h u ( \cdot +1/2 , \cdot ) ](t,x) = f(t,x,m(t)), \\[0.6em]
        \  u(T,x) = g(x),
\end{cases}
\end{equation*}
the optimal control mapping $\vt \colon \mathbb{R}(\tilde{\mathcal{T}}\times S)\to \mathbb{R}^d(\mathcal{T}\times S), \, u\mapsto v,$ is defined by
\begin{equation*}
        \begin{cases}
        \ -\frac{u(t+1,x) - u(t+1/2,x)}{\Delta t} - \theta \sigma \Delta_h u(t+1/2,x) = 0, \\[0.6em]
        \ v(t,x) = -H_p(t,x,\nabla_h u(t+1/2,x)),
\end{cases}
\end{equation*}
and the Fokker-Planck mapping $\fpt \colon \mathbb{R}^d(\mathcal{T}\times S) \to \mathbb{R}(\tilde{\mathcal{T}}\times S) ,\, v\mapsto  m$, is defined by
\begin{equation*}
\begin{cases}
        \ \frac{m(t+1,x) - m(t,x)}{\Delta t} - \theta\sigma \Delta_h m(t+1,x) - (1-\theta)\sigma \Delta_h m(t,x) +\text{div}_h \big(vm(t,x) \big) = 0 ; \\[0.6em]
         \ m(0,x) = m_0(x).
\end{cases}
\end{equation*}
Let us briefly motivate the theta-scheme. For the Fokker-Planck equation, a Crank-Nicolson discretization of the Laplace operator and an explicit discretization of the first-order term are utilized. An adjoint scheme is used for the HJB equation.

Given $u(t+1,\cdot)$, one first needs to solve an implicit scheme for the heat equation, corresponding to a diffusion equal to $\theta \sigma$. This yields the intermediate function $u(t+1/2,\cdot)$, which is not ``saved" in the output $\hjbt$. Then $u(t,\cdot)$ is obtained by solving an explicit scheme, containing the first order term and a diffusion equal to $(1-\theta) \sigma$.

\begin{rem}
The evaluation of the mapping $\hjbt$, which is a crucial step in the GFW algorithm, requires to solve successively linear equations (for the implicit part) and explicit equations. It is therefore easier to implement than fully implicit schemes, which would require to solve general non-linear equations (with a policy iteration algorithm, for example).
\end{rem}

Let us consider the following CFL condition:
\begin{equation}\label{cond:CFL}\tag{CFL}
\Delta t \leq \frac{h^2}{2d (1-\theta)\sigma} ,\qquad h \leq \frac{2(1-\theta)\sigma}{M}.
\end{equation}

\begin{lem}
Let Assumption \ref{ass:continuous} and condition \eqref{cond:CFL} hold true. Then the system \eqref{eq:theta_mfg} is a particular case of \eqref{eq:dmfg}, with $\ell$, $f$, $m_0$ and $g$ defined by \eqref{eq:grid}. Furthermore, Assumptions \ref{ass1}-\ref{ass2} hold with the constants
\begin{equation*}
D = M, \quad
L_{f} = L_f^c h^{-d/2},
\quad \text{and}
\quad \alpha = \alpha^c.
\end{equation*}
\end{lem}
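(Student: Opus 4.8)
The plan is to first identify the transition kernel $(\pi_0,\pi_1)$ that turns the theta-scheme into an instance of \eqref{eq:dmfg}, and then to verify Assumptions \ref{ass1} and \ref{ass2} with the announced constants. I would introduce the one-step operators $A = (I-\Delta t\theta\sigma\Delta_h)^{-1}$ and $B = (I+\Delta t(1-\theta)\sigma\Delta_h)A$ on $\R(S)$; since both factors are rational functions of the symmetric operator $\Delta_h$ on the torus, $A$ and $B$ are symmetric and commute with $\Delta_h$. Eliminating the intermediate value $u(t+1/2,\cdot) = A\,u(t+1,\cdot)$ in the definition of $\hjbt$ and using $-H(t,x,p) = \inf_\omega \ell(t,x,\omega) + \langle p,\omega\rangle$, the first line of $\hjbt$ becomes $u(t,x) = \inf_\omega \Delta t\,\bar{\ell}_m(t,x,\omega) + [B u(t+1)](x) + \Delta t\langle \nabla_h[A u(t+1)](x),\omega\rangle$. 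This is exactly \eqref{eq:hjbmap} with $\pi_0(t,x,y) = B(x,y)$ and $\pi_1(t,x,y) = \nabla_h[A(\cdot,y)](x)$ (both independent of $t$), and the same choice reproduces $\vt$ in \eqref{eq:vmap} through the Fenchel relation $v = -H_p$. For the Fokker--Planck side, rewriting $\fpt$ as $m(t+1) = B\,m(t) - \Delta t\, A\,\text{div}_h(vm)(t)$ and using the discrete integration-by-parts identity $\text{div}_h = -\nabla_h^*$ together with the symmetry of $A$ and $B$ (which reconciles the two index orders), one recovers \eqref{eq:fpmap} with the very same $\pi_0,\pi_1$. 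This adjoint algebra is carried out in \cite[Sec.\@ 2.3]{BLP2022}, so I would state the identification and refer to it.

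With the kernel identified, verifying Assumption \ref{ass1} is mostly a matter of unwinding \eqref{eq:grid}. The bounded-domain property and the value $D = M$ follow from the truncation in $\hat{\ell}^c$, while $\alpha$-strong convexity with $\alpha = \alpha^c$ is inherited from $\ell^c$ because restricting a strongly convex function to the ball $\{\|v\|\leq M\}$ preserves the modulus. For the regularity of $f$, the key computation is that for $m_1,m_2 \in \mathcal{P}(S)$ the piecewise-constant reconstructions satisfy $\|\mathcal{R}_h(m_1)-\mathcal{R}_h(m_2)\|_{\mathbb{L}^2(\mathbb{T}^d)} = h^{-d/2}\|m_1-m_2\|_2$; combined with the $L_f^c$-Lipschitz continuity of $f^c$ and $|B_h(x)| = h^d$, this yields $L_f = L_f^c h^{-d/2}$. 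The monotonicity of $f$ transfers from that of $f^c$ by the same change of variables: since $(m_1-m_2)(x) = h^d(\mathcal{R}_h m_1 - \mathcal{R}_h m_2)(y)$ for $y \in B_h(x)$, the discrete monotonicity sum equals $\int_{\mathbb{T}^d}(f^c(\cdot,\mathcal{R}_h m_1)-f^c(\cdot,\mathcal{R}_h m_2))(\mathcal{R}_h m_1 - \mathcal{R}_h m_2)\,dy \geq 0$, using that $\mathcal{R}_h m_i \in \mathcal{D}$.

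It remains to check Assumption \ref{ass2}, which is where the CFL condition \eqref{cond:CFL} enters. The two equality constraints are immediate from $\Delta_h \mathbf{1} = 0$ and $\nabla_h \mathbf{1} = 0$: these give $A\mathbf{1} = \mathbf{1}$, hence $\sum_y \pi_0(t,x,y) = [B\mathbf{1}](x) = 1$, and $\sum_y \pi_1(t,x,y) = \nabla_h[A\mathbf{1}](x) = 0$. For $\pi_0 = B \geq 0$, I would note that $I - \Delta t\theta\sigma\Delta_h$ is an M-matrix (positive diagonal, nonpositive off-diagonals, strict diagonal dominance), so $A \geq 0$ entrywise, while $I + \Delta t(1-\theta)\sigma\Delta_h$ has nonnegative entries precisely under the first CFL inequality $\Delta t \leq h^2/(2d(1-\theta)\sigma)$; the product is therefore nonnegative, so $\pi_0(t,x,\cdot) \in \mathcal{P}(S)$.

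The hard part is the third condition $\pi_0(t,x,y) \geq \Delta t D\|\pi_1(t,x,y)\|$, which is what forces the second CFL inequality. Fixing $y$ and writing $a(\cdot) = A(\cdot,y) \geq 0$, I would estimate entrywise: from $\|\nabla_h a(x)\| \leq \sum_i \tfrac{1}{2h}|a(x+he_i)-a(x-he_i)| \leq \sum_i \tfrac{1}{2h}(a(x+he_i)+a(x-he_i))$ (triangle inequality and $a \geq 0$), so that $\Delta t M\|\pi_1(t,x,y)\| \leq \tfrac{\Delta t M}{2h}\sum_i(a(x+he_i)+a(x-he_i))$; on the other hand, under the first CFL inequality the diagonal coefficient of $B$ is nonnegative, whence $B(x,y) \geq \tfrac{\Delta t(1-\theta)\sigma}{h^2}\sum_i(a(x+he_i)+a(x-he_i))$. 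Comparing the two bounds, the desired inequality holds as soon as $(1-\theta)\sigma/h^2 \geq M/(2h)$, i.e.\@ $h \leq 2(1-\theta)\sigma/M$, which is exactly the second CFL inequality. This entrywise comparison, crucially relying on the nonnegativity of the columns of $A$, is the main obstacle, and it is what pins down both halves of \eqref{cond:CFL}.
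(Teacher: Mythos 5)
Your proposal is correct. The paper itself gives no argument here --- its ``proof'' is a one-line citation to \cite[Lem.\@ 4.1, Lem.\@ 4.2, Thm.\@ 4.4]{BLP2022} --- so what you have done is reconstruct the content of those cited results, and every step checks out: the identification $\pi_0(t,x,y)=B(x,y)$, $\pi_1(t,x,y)=\nabla_h[A(\cdot,y)](x)$ with $A=(I-\Delta t\,\theta\sigma\Delta_h)^{-1}$ and $B=(I+\Delta t(1-\theta)\sigma\Delta_h)A$ is exactly the one the paper attributes to \cite[Lemma 4.1]{BLP2022} (the symmetry of $A$ and $B$ is indeed what reconciles the index orders between \eqref{eq:hjbmap} and \eqref{eq:fpmap}); the computation $\|\mathcal{R}_h m_1-\mathcal{R}_h m_2\|_{\mathbb{L}^2}=h^{-d/2}\|m_1-m_2\|_2$ gives precisely $L_f=L_f^c h^{-d/2}$; and the entrywise comparison $B(x,y)\geq \frac{\Delta t(1-\theta)\sigma}{h^2}\sum_i\bigl(a(x+he_i)+a(x-he_i)\bigr)$ versus $\Delta t M\|\nabla_h a(x)\|\leq \frac{\Delta t M}{2h}\sum_i\bigl(a(x+he_i)+a(x-he_i)\bigr)$, valid because the columns of $A$ are nonnegative (M-matrix inverse), is exactly what pins down the two halves of \eqref{cond:CFL}. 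The only cosmetic remark is that the lower semicontinuity and strong convexity of $\hat\ell^c(t,x,\cdot)$ as an extended-valued function deserve the one-line observation that at boundary points of the ball the subdifferential picks up a normal-cone term $n$ with $\langle n,v_2-v_1\rangle\leq 0$, which only helps the strong-convexity inequality; this does not affect the validity of your argument.
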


\begin{proof}
See \cite[Lem.\@ 4.1, Lem.\@ 4.2, Thm.\@ 4.4]{BLP2022}.
\end{proof}

Let us emphasize that the well-posedness of the mappings $\hjbt$, $\vt$, and $\fpt$ is a corollary of the fact that \eqref{eq:theta_mfg} is a discrete MFG. The explicit formulas for $\pi_0$ and $\pi_1$ are provided in \cite[Lemma 4.1]{BLP2022}.
We define the traces  $u_h^* \in \R(\tilde{\mathcal{T}} \times S)$ and $m_h^* \in \mathcal{P}(\tilde{\mathcal{T}},S)$ of the continuous solution $(u^*,m^*)$ as follows:
\begin{equation} \label{eq:traces}
u_h^*(t,x)= u^*(t\Delta,x) \quad \text{and} \quad m_h^*(t,\cdot)= \mathcal{I}_h(m^*(t\Delta t, \cdot)),
\quad
\forall t \in \tilde{\mathcal{T}}, \forall x \in S.
\end{equation}
The main result of \cite{BLP2022} is the following error estimate.

\begin{thm}\label{thm:main1}
 Let $1/2 < \theta <1$. Let Assumptions \ref{ass:continuous}-\ref{ass:sol+} and condition \eqref{cond:CFL} hold true.
Then \eqref{eq:theta_mfg} admits a unique solution $(u_h, v_h, m_h)$. There exists $C^{*}$ independent of $\Delta t$ and $h$ such that
\begin{equation*}
\|u_h - u_h^{*} \|_{\infty,\infty}  + \|m_h - m_h^* \|_{\infty,1} \leq C^{*} h^r.
\end{equation*}
\end{thm}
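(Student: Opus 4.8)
The plan is to follow the monotone-MFG error-analysis scheme of \cite{achdou2013mean}, adapted to the theta-discretization, in three stages: a reduction of well-posedness to the abstract theory already developed, a consistency estimate, and a coupled forward-backward stability estimate. Existence and uniqueness of $(u_h, v_h, m_h)$ require no new work: the preceding lemma shows that \eqref{eq:theta_mfg} is a particular case of \eqref{eq:dmfg} with $D = M$, $L_f = L_f^c h^{-d/2}$, and $\alpha = \alpha^c$, so Theorem \ref{thm1} gives existence and Lemma \ref{lem:uniqueness_all} gives uniqueness. The whole content therefore lies in the error bound.

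First I would prove consistency. Inserting the traces $u_h^*$ and $m_h^*$ defined in \eqref{eq:traces} into the discrete HJB and FP equations of \eqref{eq:theta_mfg}, I would Taylor-expand the centered difference operators $\Delta_h$, $\nabla_h$, and the discrete divergence, and use the regularity $u^* \in \mathcal{C}^{1+r/2,2+r}(Q)$ and $v^* \in \mathcal{C}^r(Q)\cap \mathbb{L}^{\infty}([0,1];\mathcal{C}^{1+r}(\mathbb{T}^d))$ from Assumption \ref{ass:sol+} to bound the resulting truncation residuals by $C h^r$ in the relevant norms. The Crank-Nicolson treatment of the diffusion and the centered differences are formally second order, but the H\"older exponent $r$ of the top-order derivatives caps the rate at $h^r$; the condition \eqref{cond:CFL}, which enforces $\Delta t \lesssim h^2$, guarantees that the time-truncation and the explicit first-order term both contribute at this same order, while the second inequality in \eqref{cond:CFL} together with the definition of $M$ in \eqref{eq:cons_M} ensures that the truncation threshold is inactive along $v^*$, so that $\hat{\ell}^c$ coincides with $\ell^c$ there and no truncation error is introduced.

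The core of the argument, and the main obstacle, is the stability estimate, where the forward-backward coupling forbids treating the two equations in isolation. I would write the equations satisfied by the errors $u_h - u_h^*$ and $m_h - m_h^*$; these are perturbed linearizations of the HJB and FP equations whose source terms are exactly the consistency residuals. Pairing the HJB-error equation against the density error $m_h - m_h^*$ and the FP-error equation against the value error $u_h - u_h^*$, then performing discrete summation by parts to cancel the diffusion and transport terms, is the mechanism that decouples the system. Into the remaining terms I would feed the monotonicity of $f^c$ (item (3) of Assumption \ref{ass:continuous}), which provides a favorable sign, and the strong convexity of $\ell^c$ with modulus $\alpha^c$ (item (2) of Assumption \ref{ass:continuous}), which yields the coercive control $\tfrac{\alpha^c}{2}\Delta t \sum_{t,x}\|v_h - v_h^*\|^2 m_h(t,x)$. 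Absorbing the residuals by Cauchy-Schwarz then gives a bound of order $h^{2r}$ on this weighted velocity error. The delicate point is to propagate the $O(h^r)$ residuals through this pairing without losing powers of $h$; since $L_f = L_f^c h^{-d/2}$ is mesh-dependent, one must work with the $\| \cdot \|_{\infty,1}$ and $\| \cdot \|_{\infty,\infty}$ norms (rather than $\| \cdot \|_{\infty,2}$) so that the factors $|S| = h^{-d}$ cancel and the final constant stays mesh-independent.

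It then remains to convert the weighted velocity estimate into the stated norms. The energy estimate of Lemma \ref{lemma:cont_FP}, applied to $\fpt$, turns the $O(h^{2r})$ velocity bound into $\|m_h - m_h^*\|_{\infty,2}^2 \lesssim h^{2r}$, and passing to the $\infty,1$ norm via $\|\cdot\|_{\infty,1}\leq |S|^{1/2}\|\cdot\|_{\infty,2}$ gives $\|m_h - m_h^*\|_{\infty,1}\lesssim h^r$ once the $h^{-d}$ factors cancel against $L_f$. The continuity estimate of Lemma \ref{lm:hjb_stab}, applied to $\hjbt$, then yields $\|u_h - u_h^*\|_{\infty,\infty}\lesssim h^r$. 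Collecting the two bounds produces a constant $C^*$ independent of $\Delta t$ and $h$, which is precisely the mesh-independence exploited in the sequel, and completes the proof.
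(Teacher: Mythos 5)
The paper does not actually prove this theorem in-house: its entire proof is ``See \cite[Thm.\@ 2.10]{BLP2022}'', so the statement is imported from the authors' earlier work. Your outline reconstructs what that citation presumably contains, and the overall shape is right: the consistency-plus-cross-pairing stability argument in the style of \cite{achdou2013mean}, with the monotonicity of $f^c$ supplying the good sign and the strong convexity of $\ell^c$ supplying the coercive weighted velocity term, is the standard route. Two points are nevertheless genuine gaps. First, your reduction of uniqueness to Lemma \ref{lem:uniqueness_all} is not available under the stated hypotheses: that lemma is proved in a part of the paper where Assumption \ref{ass3} (the discrete potential structure) is in force, whereas Theorem \ref{thm:main1} assumes only Assumptions \ref{ass:continuous}--\ref{ass:sol+} and \eqref{cond:CFL}, with no potential structure. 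Uniqueness must instead come from the Lasry--Lions monotonicity argument run directly on the discrete system --- which is essentially your stability pairing applied to two exact discrete solutions with zero residuals --- rather than being outsourced to the potential-case lemma.

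Second, the final norm bookkeeping does not close as written. An $\|\cdot\|_{\infty,2}$ bound of order $h^{r}$ on $m_h-m_h^*$, combined with $\|\cdot\|_{\infty,1}\leq |S|^{1/2}\|\cdot\|_{\infty,2}=h^{-d/2}\|\cdot\|_{\infty,2}$, gives only $O(h^{r-d/2})$ in the $\|\cdot\|_{\infty,1}$ norm; to reach $O(h^{r})$ the $\|\cdot\|_{\infty,2}$ error must carry an extra factor $h^{d/2}$, reflecting the scaling $\|m\|_{\infty,2}^2\lesssim h^d$ of discrete probability vectors (compare $C_1\leq E_1 h^d$ in Lemma \ref{lm:constant1}). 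Your remark that ``the $h^{-d}$ factors cancel against $L_f$'' gestures at this but does not establish it; the cancellation has to be tracked explicitly through both the consistency residuals and the weighted velocity estimate, and it is precisely this tracking that makes $C^*$ mesh-independent. Relatedly, Lemma \ref{lemma:cont_FP} compares $\fp(v_1)$ with $\fp(v_2)$ for two admissible discrete vector fields, while $m_h^*$ is the trace of the continuous solution and is not of the form $\fpt(v)$ for any discrete $v$; the lemma applies only after the FP consistency residual is inserted as an additional source term, which modifies the constant.
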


\begin{proof}
See \cite[Thm.\@ 2.10]{BLP2022}.
\end{proof}

\subsection{GFW algorithm for the theta-scheme and main results}

Let us first give the assumption on the potential structure of $f$.

\begin{ass}\label{ass:potential}
There exists a function $F^c \colon [0,1]\times \mathcal{D} \to \mathbb{R}^d$ such that \eqref{eq:potential} holds.
\end{ass}

The following lemma shows that the discretization of $f^c$ proposed in \eqref{eq:grid} preserves the potential structure of the MFG system at the discrete level.

\begin{lem}\label{lm:ass}
Let Assumption \ref{ass:potential} hold true. Then, $f$ defined by \eqref{eq:grid} satisfies Assumption \ref{ass3}, with the primitive function $F$ defined by
\begin{equation*}
F (t,m) = F^c(t\Delta t,\mathcal{R}_h(m)).
\end{equation*}
\end{lem}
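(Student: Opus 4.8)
The plan is to verify that $F(t,m) = F^c(t\Delta t, \mathcal{R}_h(m))$ satisfies the defining identity of Assumption \ref{ass3}, namely
\begin{equation*}
F(t,m_1) - F(t,m_2) = \int_0^1 \sum_{x\in S} f(t,x, m_1 + s(m_2-m_1))(m_2(x) - m_1(x))\, ds,
\end{equation*}
by translating the continuous potential identity \eqref{eq:potential} through the reconstruction operator $\mathcal{R}_h$. The key structural facts I would exploit are that $\mathcal{R}_h$ is linear (so $\mathcal{R}_h(m_1 + s(m_2-m_1)) = \mathcal{R}_h(m_1) + s(\mathcal{R}_h(m_2) - \mathcal{R}_h(m_1))$) and that $\mathcal{R}_h(m)$ is piecewise constant, taking the value $m(x)/h^d$ on each cell $B_h(x)$.

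First I would write out the left-hand side using the definition of $F$: $F(t,m_1) - F(t,m_2) = F^c(t\Delta t, \mathcal{R}_h(m_1)) - F^c(t\Delta t, \mathcal{R}_h(m_2))$. To this I apply the continuous potential identity \eqref{eq:potential} with the two measures $\mu_1 = \mathcal{R}_h(m_1)$ and $\mu_2 = \mathcal{R}_h(m_2)$ in $\mathcal{D}$, obtaining an expression of the form $\int_0^1 \int_{\mathbb{T}^d} f^c(t\Delta t, y, \mu_1 + s(\mu_2-\mu_1))(\mu_2(y) - \mu_1(y))\, dy\, ds$. By linearity of $\mathcal{R}_h$, the interpolated argument equals $\mathcal{R}_h(m_1 + s(m_2 - m_1))$.

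Next I would convert the spatial integral over $\mathbb{T}^d$ into a sum over the grid $S$ by splitting $\int_{\mathbb{T}^d} = \sum_{x\in S}\int_{B_h(x)}$. On each cell $B_h(x)$, the factor $\mu_2(y) - \mu_1(y) = (m_2(x) - m_1(x))/h^d$ is constant, so it pulls out of the inner integral; the remaining integral $\int_{B_h(x)} f^c(t\Delta t, y, \mathcal{R}_h(m_1 + s(m_2-m_1)))\, dy$ is precisely $h^d f(t,x, m_1 + s(m_2-m_1))$ by the definition of the discrete coupling $f$ in \eqref{eq:grid}. The factor $h^d$ cancels the $h^{-d}$ sitting in $\mu_2(y)-\mu_1(y)$, yielding exactly $\sum_{x\in S} f(t,x,\cdot)(m_2(x)-m_1(x))$ inside the $s$-integral, as required.

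I do not expect a genuine obstacle here, since the argument is a routine change of variables combined with the linearity of $\mathcal{R}_h$; the only point requiring mild care is bookkeeping the cancellation of the $h^d$ factors between $\mathcal{R}_h$ and the averaging definition of $f$, and confirming that $\mathcal{R}_h(m) \in \mathcal{D}$ for $m \in \mathcal{P}(S)$ so that \eqref{eq:potential} legitimately applies (nonnegativity is clear, and the total mass is $\int_{\mathbb{T}^d}\mathcal{R}_h(m)\,dy = \sum_{x\in S} m(x) = 1$). With those verifications the identity of Assumption \ref{ass3} holds and the proof is complete.
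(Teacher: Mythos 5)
Your proposal is correct and follows essentially the same route as the paper's proof: apply the continuous potential identity \eqref{eq:potential} to $\mathcal{R}_h(m_1)$ and $\mathcal{R}_h(m_2)$, use linearity of $\mathcal{R}_h$ for the interpolated argument, decompose $\mathbb{T}^d$ into the cells $B_h(x)$ where $\mathcal{R}_h$ is constant, and identify the cell averages with the discrete $f$ from \eqref{eq:grid}. Your added check that $\mathcal{R}_h(m)\in\mathcal{D}$ is a sensible extra verification the paper leaves implicit.
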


\begin{proof}
Taking any $(t,m_1,m_2) \in \mathcal{T}\times \mathcal{P}(S)^2$, we have that
\begin{equation*}
\begin{split}
F(t,m_1) - F(t,m_2) & = F^c(t\Delta t,\mathcal{R}_h(m_1)) - F^c(t \Delta t,\mathcal{R}_h(m_2)) \\
& =\int_{0}^1 \int_{x\in \mathbb{T}^d} f^c(t\Delta t,x, \mathcal{R}_h(m_1 + s(m_2-m_1))) (\mathcal{R}_h(m_2)(x) - \mathcal{R}_h(m_1)(x))dx ds\\
& = \int_{0}^1 \sum_{x\in S} \frac{1}{h^d}\int_{y\in B_h(x)} f^c(t\Delta t,y, \mathcal{R}_h(m_1 + s(m_2-m_1))) dy (m_2(x)-m_1(x)) ds \\
& = \int_{0}^1 \sum_{x\in S} f(t,x,m_1+s(m_2-m_1)) (m_2(x)-m_1(x)) ds,
\end{split}
\end{equation*}
where the second equality follows from Assumption \ref{ass:potential} and the linearity of operator $\mathcal{R}_h$, the third equality comes from the fact that $\mathcal{R}_h(m)$ is constant in $B_h(x)$, and the last equality derives from the definition of $f$.
\end{proof}

Following the definition of $\br$ in Sec.\@ \ref{sec:FW}, we define the best response mapping for \eqref{eq:theta_mfg}: for any $m'\in \mathcal{P}(\tilde{\mathcal{T}},S)$,
\begin{equation*}
\brt (m') = \chi (\tilde{m},\tilde{v}), \qquad \text{where }  \tilde{v} = \vt \circ \hjbt (m') \quad \text{and} \quad \tilde{m} = \fpt(\tilde{v}).
\end{equation*}
The GFW algorithm writes as follows.

\medskip

\begin{algorithm}[H]
\SetAlgoLined
Initialization: $ m_h^0 \in \mathcal{P}(\tilde{\mathcal{T}},S)$\;
First iteration: $ (m_h^1,w_h^1) = \brt(m_h^0) $    \;
\For{$k= 1,2,\ldots$}{
\medskip
\textbf{Step 1: Resolution of the partial linearized problem.} \\
Set $(\bar{m}_h^k, \bar{w}_h^k) = \brt(m_h^k)$\;
\medskip
\textbf{Step 2: Update.}\\
Choose $\lambda_k \in [0,1]$\;
Set $(m_h^{k+1},w_h^{k+1}) = (1-\lambda_k) (m_h^{k},w_h^{k}) + \lambda_k (\bar{m}_h^k, \bar{w}_h^k)$\;
\medskip
}
\caption{Generalized Frank-Wolfe Algorithm for \eqref{eq:theta_mfg}}\label{alg2}
\end{algorithm}

\medskip

From now on, all notations introduced in Sections \ref{sec:dmfg} and \ref{sec:FW} for general discrete MFGs will be restricted to \eqref{eq:theta_mfg}, without the adjunction of the subscript $\theta$. For example, we will denote by $\gamma_k$ the optimality gap in Algorithm \ref{alg2}. The following result is our first mesh-independence principle. Recall that $(u_h,m_h)$ and $(u_h^*,m_h^*)$ have been introduced in Theorem \ref{thm:main1}.

\begin{thm} [Sublinear rate]\label{thm:main2}
Let Assumptions \ref{ass:continuous}-\ref{ass:potential} and condition \eqref{cond:CFL} hold true. In Algorithm \ref{alg2}, take $\lambda_k = 2/(k+2)$, for any $k \geq 1$. Then there exist two constants $C_{\theta}$ and $\bar{C}_{\theta}$, both independent of $\Delta t$ and $h$, such that
\begin{equation} \label{eq:sublin_rate}
\gamma_{k} \leq \frac{C_{\theta}}{k}, \quad \forall k \geq 1,
\end{equation}
and such that
\begin{equation} \label{eq:sublin_distance}
\begin{split}
&\|u_h^k - u_h\|_{\infty,\infty} + \|m_h^k - m_h\|_{\infty,1}  \leq  \frac{\bar{C}_{\theta}}{\sqrt{kh^{d/2}}},\\
& \|u_h^k - u_h^{*} \|_{\infty,\infty} +  \|m_h^k - m_h^*\|_{\infty,1}  \leq  \bar{C}_{\theta}\left(h^{r} + \frac{1}{\sqrt{kh^{d/2}}} \right),
\end{split}
\end{equation}
for any $k \geq 1$. In particular, for $k \geq h^{-(2r + d/2)}$, we have
\begin{equation}
\|u_h^k - u_h^{*} \|_{\infty,\infty} +  \| m_h^k - m_h^* \|_{\infty,1} \leq  2\bar{C}_{\theta}h^r.
\end{equation}
\end{thm}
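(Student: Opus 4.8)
The statement combines the abstract sublinear rate from Proposition \ref{prop:convergence1}(1) with the error estimate of Theorem \ref{thm:main1}, and the whole point is to track the $h$-dependence of the convergence constants through the explicit formulas \eqref{eq:C}. The plan is to first establish \eqref{eq:sublin_rate} by applying Proposition \ref{prop:convergence1}(1) to \eqref{eq:theta_mfg} and showing that the constant $8D_1$ is, up to $h$-powers, mesh-independent. Recall $D_1 = C_1 L_f |S|^{1/2}$. Here $L_f = L_f^c h^{-d/2}$ by the lemma preceding Theorem \ref{thm:main1}, and $|S| = N^d = h^{-d}$, so $|S|^{1/2} = h^{-d/2}$. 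The crucial step is to control $C_1 = \sup_{\|v\|_{\infty,\infty}\leq D}\|\fpt(v)\|_{\infty,2}^2$: the mass of $m(t,\cdot)$ is normalized to $1$ in the $\ell^1$-sense (each $m(t,\cdot)\in\mathcal{P}(S)$), so the $\ell^2$-norm squared scales like $h^d$ times a bounded $\mathbb{L}^2$-density; I expect $C_1 = O(h^d)$. Combining, $D_1 = O(h^d)\cdot O(h^{-d/2})\cdot O(h^{-d/2}) = O(1)$, which yields a mesh-independent $C_\theta$. This is exactly where the paper says the technical analysis relies on precise estimates of $C_1$ (and $C_2$) via the energy and $L^\infty$-estimates for the discrete Fokker-Planck equation.

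\textbf{From the rate to the distances.} For the first line of \eqref{eq:sublin_distance}, I would invoke Lemma \ref{lm:u_m}, which gives $\|u_h^k - u_h\|_{\infty,\infty} + \|m_h^k - m_h\|_{\infty,1} \leq D_2\sqrt{\gamma_k}$. Substituting \eqref{eq:sublin_rate} gives a bound of the form $D_2\sqrt{C_\theta/k}$, so I must check that $D_2$ carries at most an $h^{-d/4}$ factor to match the claimed $1/\sqrt{k h^{d/2}}$. From \eqref{eq:C}, $D_2 = (L_f + |S|^{1/2})\sqrt{2C_2C_3/\alpha}$. The term $L_f + |S|^{1/2} = h^{-d/2}(L_f^c + 1)$ contributes $h^{-d/2}$, while I expect $C_2$ (an $L^\infty$-bound on the density-like quantity $m/h^d$, i.e.\ $\|m\|_{\infty,\infty}$) and $C_3$ to be controlled so that $\sqrt{C_2 C_3}$ is $O(1)$ or at worst $O(h^{d/2})$; the net $D_2\sqrt{C_\theta}$ should then be $O(h^{-d/4})$, giving precisely the $\bar C_\theta/\sqrt{kh^{d/2}}$ scaling. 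The second line of \eqref{eq:sublin_distance} then follows immediately from the triangle inequality, combining the first line with Theorem \ref{thm:main1}, which supplies the $C^* h^r$ discretization error between $(u_h,m_h)$ and the traces $(u_h^*,m_h^*)$. One sets $\bar C_\theta = \max\{D_2\sqrt{C_\theta},\,C^*\}$ (up to the $h$-power bookkeeping).

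\textbf{The balancing step.} The final assertion is an optimization-over-$k$ statement: the bound $\bar C_\theta(h^r + (kh^{d/2})^{-1/2})$ has two competing terms, and I would simply require the algorithmic term to be dominated by the discretization term, i.e.\ $(k h^{d/2})^{-1/2} \leq h^r$, which rearranges to $k \geq h^{-2r}\cdot h^{-d/2} = h^{-(2r+d/2)}$. Under that threshold both terms are at most $h^r$, so their sum is at most $2\bar C_\theta h^r$. This is purely elementary once the scaling of the constants is pinned down.

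\textbf{Main obstacle.} The genuine work — and where I expect the difficulty to lie — is establishing the sharp $h$-dependence of $C_1$, $C_2$, and $C_3$ for the theta-scheme Fokker-Planck operator $\fpt$. The definitions make $C_1$ and $C_3$ depend on the $\|\cdot\|_{\infty,2}$-stability of $\fpt$ (the constant $C$ in Lemma \ref{lemma:cont_FP}, whose dependence on $h$ is hidden inside a Gronwall argument invoking norm-equivalence on a space whose dimension grows as $h^{-d}$), and $C_2$ on an $L^\infty$-bound. Proving these are mesh-independent (after the natural $h^d$ rescaling that converts the probability-vector $\ell^1$-normalization into an $\mathbb{L}^2$-density bound) requires the discrete energy estimate and the discrete maximum-principle / $L^\infty$-estimate for \eqref{eq:theta_mfg} alluded to at the end of the introduction and presumably carried out before Theorem \ref{thm:main2} in the paper; everything above reduces to feeding those scalings into the explicit constant formulas \eqref{eq:C}.
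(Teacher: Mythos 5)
Your proposal follows exactly the paper's route: inequality \eqref{eq:sublin_rate} comes from Proposition \ref{prop:convergence1}(1) once one shows $D_1 = C_1 L_f |S|^{1/2} = O(1)$, the first line of \eqref{eq:sublin_distance} comes from Lemma \ref{lm:u_m} together with $D_2 = O(h^{-d/4})$, the second line is the triangle inequality with Theorem \ref{thm:main1}, and the final claim is the elementary balancing $(k h^{d/2})^{-1/2} \leq h^r$. The paper's proof is precisely this reduction, with $C_\theta = 8E_1 L_f^c$ and $\bar C_\theta = \max\{4E_1 L_f^c(1+L_f^c)\sqrt{E_1 E_3/\alpha^c},\, C^*\}$.

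The one genuine gap is that the quantitative heart of the argument --- the estimates $C_1 \leq E_1 h^d$, $C_2 \leq \sqrt{C_1}$, $C_3 \leq E_3$ with $E_1, E_3$ independent of $h$ and $\Delta t$ --- is asserted as an expectation rather than proved. Your heuristic for $C_1 = O(h^d)$ (``the mass of $m(t,\cdot)$ is normalized to $1$ in $\ell^1$, so $\|m(t,\cdot)\|_2^2$ scales like $h^d$'') is not sufficient by itself: $\ell^1$-normalization only gives $\|m\|_2^2 \leq 1$ (a discrete point mass saturates this), and the $h^d$ gain requires propagating the bound $\|m_0\|_2^2 = O(h^d)$ forward in time through the discrete Fokker--Planck equation. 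The paper does this via a discrete energy estimate (Lemma \ref{lm:energy1}), whose proof uses the Crank--Nicolson structure of the theta-scheme, Young's inequality with the parameter $\sigma(2\theta-1)>0$, the condition \eqref{cond:CFL}, and a discrete Gronwall argument; the same estimate with $\mu_0 = 0$ and $\delta_v = (v_1-v_2)m_1$ gives $C_3 \leq E_3$ uniformly in $h$ (the generic constant of Lemma \ref{lemma:cont_FP} would not do, as its norm-equivalence argument degrades as $h \to 0$, a point you correctly flag). A second, smaller imprecision: you hedge that $\sqrt{C_2 C_3}$ might be ``$O(1)$ or at worst $O(h^{d/2})$,'' but $O(1)$ would give only $D_2 = O(h^{-d/2})$, which is too weak for the claimed $1/\sqrt{k h^{d/2}}$ bound; one needs $C_2 = O(h^{d/2})$, which the paper obtains for free from $C_2 \leq \|m\|_{\infty,\infty} \leq \|m\|_{\infty,2} \leq \sqrt{C_1}$. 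With these two points supplied, your argument coincides with the paper's.
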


\begin{rem}
Let us emphasize that only the estimate \eqref{eq:sublin_rate} is mesh-independent. The estimates provided in \eqref{eq:sublin_distance} get worse as $h \rightarrow 0$.
\end{rem}

The proof of Theorem \ref{thm:main2} is given in Subsec.\@ \ref{subsec:proof_sublin}. It is based on Proposition \ref{prop:convergence1}, more precisely on estimates of the three fundamental constants $C_1$, $C_2$, and $C_3$. We will derive from these estimates some estimates of the constants $D_1$, $D_2$, and $c$ (as defined in  \eqref{eq:C}). A key point is that the estimate of the constant $D_1$ is mesh-independent. Our analysis mainly relies on a stability result for the discrete Fokker-Planck equation, for the $\ell^2$-norm.

In order to establish mesh-independent estimates of $D_2$ and $c$, so as to have a mesh-independent linear rate of convergence, we need to establish an $\ell^\infty$-estimate for the discrete Fokker-Planck, assuming that the involved vector field $v$ derives from a value function $u$. In the continuous case, such an estimate is relatively easy to deduce from the semi-concavity of $u$. The transposition of this analysis to a discrete setting is quite delicate; our analysis is restricted to the case of a separable running cost (with respect to the control variables).

A function $l\colon \mathbb{R}^d\rightarrow \mathbb{R}$ is said to be semi-concave if $l(x)-L\|x\|^2/2$ is concave, for some $L\geq 0$. This definition makes no sense when $l$ is a function defined on torus. In fact, one can check that a periodical function is concave if and only if it is  constant. Besides the previous definition, a second one based on a quadratic inequality is introduced in \cite[Def.\@ 1.1.1]{cannarsa2004} for functions on an open set. We use the latter one for our definition of semi-concavity on a torus and its discretization.

\begin{defn} \label{def:semi-concave1}[Semi-concave functions on the torus]
Let $L$ be a positive constant. A function $l^c\colon \mathbb{T}^d\rightarrow \mathbb{R}$ is said to be $L$-semi-concave if 
\begin{equation}\label{eq:semi-concave}
 l^c\left(x\right) \geq \frac{l^c(x+y)+l^c(x-y)}{2} - L\|y\|^2, \qquad \forall x,\,y \in \mathbb{T}^d.
\end{equation}
\end{defn}

\begin{rem}
If a function  $l^c\colon \mathbb{T}^d\rightarrow \mathbb{R}$ is $\mathcal{C}^2$, then it is semi-concave, as can be easily verified.
\end{rem}

We consider the following assumption.

\begin{ass}\label{ass:semi-concave-l} 
\begin{enumerate}
\item \emph{Semi-concavity}. There exists $L^c>0$, such that for any $(t,v,m)\in [0,1]\times \mathbb{R}^d\times \mathcal{D}$ with $\|v\|\leq M$, the functions $\ell^c(t,x,v)$, $f^c(t,x,m)$, and $g^c(x)$ are $L^c$-semi-concave with respect to $x$.
\item \emph{Separability}. There exist functions $\ell^c_i \colon  Q \times \mathbb{R} \rightarrow \mathbb{R}$ for $i=1,2,\ldots,d$, such that for any $(t,x,v)\in Q\times \mathbb{R}^d$,
\begin{equation*}
\ell^c(t,x,v) = \sum_{i=1}^d \ell^c_i(t,x,v_i),
\end{equation*}
where $v_i$ is the $i$-th coordinate of $v$.
\end{enumerate}
\end{ass}

In the next theorem we establish a linear and mesh-independent rate of convergence for the GFW algorithm with the line-search rule \eqref{eq:lambda}.

\begin{thm}[Linear rate]\label{thm:main3}
Let Assumptions \ref{ass:continuous}-\ref{ass:semi-concave-l} and condition \eqref{cond:CFL} hold true. In Algorithm \ref{alg2}, set $\lambda_k$ with the rule \eqref{eq:lambda}, for all $k \geq 1$. Then there exist three constants $c_{\theta} \in (0,1)$, $C_{\theta}>0$ and $\bar{C}_{\theta}$, independent of $\Delta t$ and $h$, such that
\begin{equation} \label{eq:linear_rate_theta}
\gamma_{k}  \leq C_{\theta} c_{\theta}^{k}, \quad \forall k \geq 1,
\end{equation}
and such that
\begin{equation} \label{eq:linear_stab_theta}
\begin{split}
& \|u_h^k - u_h\|_{\infty,\infty} + \|m_h^k - m_h\|_{\infty,1}  \leq \bar{C}_{\theta} c_{\theta}^{k/2}, \\
& \|u_h^k - u^{*}_h\|_{\infty,\infty} +  \|m_h^k - m^{*}_h\|_{\infty,1}  \leq  \bar{C}_{\theta}\left(h^{r} + c_{\theta}^{k/2} \right),
\end{split}
\end{equation}
for any $k \geq 1$. In particular, taking $k \geq 2r \log(h)/\log(c_{\theta})$, we have
\begin{equation}
\|u_h^k - u^{*}_h\|_{\infty,\infty} +  \|m_h^k - m^{*}_h\|_{\infty,1} \leq  2\bar{C}_{\theta}h^r.
\end{equation}
\end{thm}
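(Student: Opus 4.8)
The plan is to deduce Theorem~\ref{thm:main3} from Proposition~\ref{prop:convergence1}(2), the entire difficulty being to show that the constants $D_1$, $D_2$, and $c$ from \eqref{eq:C} are bounded independently of $\Delta t$ and $h$. Recall that for the theta-scheme the discretization lemma gives $D=M$, $\alpha=\alpha^c$, and, crucially, $L_f=L_f^c h^{-d/2}$, while $|S|=h^{-d}$, so that $L_f|S|^{1/2}=L_f^c h^{-d}$ \emph{blows up} as $h\to 0$. The key observation is therefore that the three fundamental constants must carry a compensating power of $h$; I expect
\begin{equation*}
C_1 \leq K_1 h^d, \qquad C_2 \leq K_2 h^d, \qquad C_3 \leq K_3,
\end{equation*}
with $K_1,K_2,K_3$ mesh-independent. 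Granting these, $D_1=C_1 L_f|S|^{1/2}\leq K_1 L_f^c$ is mesh-independent; $C_2 C_3 L_f|S|^{1/2}\leq K_2 K_3 L_f^c$ is bounded above, so that $c=\max\{1-\tfrac{\alpha^c}{4C_2C_3L_f|S|^{1/2}},\tfrac12\}\leq \max\{1-\tfrac{\alpha^c}{4K_2K_3L_f^c},\tfrac12\}=:c_\theta<1$ is bounded away from $1$; and $D_2=(L_f+|S|^{1/2})\sqrt{2C_2C_3/\alpha}\leq (L_f^c+1)\sqrt{2K_2K_3/\alpha^c}$ is again mesh-independent, the $h^{-d/2}$ from $L_f+|S|^{1/2}$ exactly cancelling the $h^{d/2}$ coming from $\sqrt{C_2}$.

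The bounds $C_1\leq K_1 h^d$ and $C_3\leq K_3$ I would obtain from a single mesh-independent energy estimate for $\fpt$. Multiplying the discrete Fokker--Planck equation by $m(t+1,\cdot)$ and summing over $S$, the Crank--Nicolson diffusion term is dissipative; using $\|v\|_{\infty,\infty}\leq D=M$, a discrete integration by parts on $\text{div}_h(vm)$, and a discrete Gronwall argument under \eqref{cond:CFL}, one controls $\|m(t,\cdot)\|_2^2$ by $\|m_0\|_2^2=\|\mathcal{I}_h(m_0^c)\|_2^2\leq h^d\|m_0^c\|_{\mathbb{L}^2}^2$ (Jensen's inequality) up to a factor independent of $h$, giving $C_1\leq K_1 h^d$. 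Applied to the difference of two solutions, i.e.\ to \eqref{eq:delta_m}, the same computation produces the constant $C_3$ with a bound independent of the mesh, sharpening the statement of Lemma~\ref{lemma:cont_FP} whose Gronwall constant was allowed to depend on $h$.

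The main obstacle is the bound $C_2\leq K_2 h^d$, that is, $\|\fpt\circ\vt\circ\hjbt(m)\|_{\infty,\infty}\leq K_2 h^d$ uniformly in $m$, equivalently a uniform $L^\infty$-bound on the reconstructed density $\mathcal{R}_h(m)$. This is exactly where Assumption~\ref{ass:semi-concave-l} enters: in the continuous case such a bound follows from semi-concavity of the value function, which yields a one-sided bound on $\text{div}\,v$ and hence, via the maximum principle for the Fokker--Planck equation, a controlled growth of $\|m(t)\|_\infty$. The delicate part is to transpose this to the discrete operators. I would first show that the discrete feedback $v=\vt(u)=-H_p(t,x,\nabla_h u(\cdot+1/2,\cdot))$ inherits a discrete semi-concavity/one-sided Lipschitz property from $u$; this is where \emph{separability} $\ell^c(t,x,v)=\sum_i\ell^c_i(t,x,v_i)$ is indispensable, since it decouples the Hamiltonian and reduces the argument to a one-dimensional estimate in each coordinate direction. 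One then uses this control of $\text{div}_h v$ in a discrete comparison/maximum-principle argument for $\fpt$—the explicit $(1-\theta)\sigma$-diffusion together with \eqref{cond:CFL} guaranteeing monotonicity—to bound the growth of $\|m(t,\cdot)\|_\infty$ by a mesh-independent factor times $\|m_0\|_\infty\leq\|m_0^c\|_{\infty}h^d$.

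Once these scaling estimates are in hand, the conclusion is routine. Proposition~\ref{prop:convergence1}(2) gives $\gamma_k\leq 4D_1 c^k\leq 4K_1 L_f^c\,c_\theta^k=:C_\theta c_\theta^k$, which is \eqref{eq:linear_rate_theta}. Lemma~\ref{lm:u_m} then yields $\|u_h^k-u_h\|_{\infty,\infty}+\|m_h^k-m_h\|_{\infty,1}\leq D_2\sqrt{\gamma_k}\leq D_2\sqrt{C_\theta}\,c_\theta^{k/2}=:\bar{C}_\theta c_\theta^{k/2}$, the first line of \eqref{eq:linear_stab_theta}. Combining with the $O(h^r)$ error estimate of Theorem~\ref{thm:main1} through the triangle inequality gives the second line, after enlarging $\bar{C}_\theta$ to absorb $C^*$; and choosing $k$ so that $c_\theta^{k/2}\leq h^r$, i.e.\ $k\geq 2r\log(h)/\log(c_\theta)$, produces the final $2\bar{C}_\theta h^r$ bound.
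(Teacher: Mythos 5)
Your proposal is correct and follows essentially the same route as the paper: a mesh-independent energy estimate for the discrete Fokker--Planck equation gives $C_1\leq K_1h^d$ and $C_3\leq K_3$, while the $\ell^\infty$-bound $C_2\leq K_2h^d$ is obtained exactly as in the paper, via semi-concavity of the discrete value function (using separability to reduce to coordinatewise estimates on $H^c_i$), a one-sided bound on $\mathrm{div}_h v$, and a discrete maximum principle under \eqref{cond:CFL}. The scaling bookkeeping for $D_1$, $D_2$, and $c$ against $L_f|S|^{1/2}=L_f^ch^{-d}$ and the final assembly via Proposition \ref{prop:convergence1}(2), Lemma \ref{lm:u_m}, and Theorem \ref{thm:main1} match the paper's argument.
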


Note that both estimates \eqref{eq:linear_rate_theta} and \eqref{eq:linear_stab_theta} are now mesh-independent. The proof of Theorem \ref{thm:main3} is given in Subsec.\@ \ref{subsec:proof_lin}.
 
\subsection{Proof of the sublinear rate of convergence} 
\label{subsec:proof_sublin}

We prove in this section Theorem \ref{thm:main2}.
Assumptions \ref{ass:continuous}-\ref{ass:potential} are supposed to be satisfied all along the subsection, as well as the condition \eqref{cond:CFL}. Our analysis relies on an energy estimate, obtained in Lemma \ref{lm:energy1}, which allows us to find first estimates of the convergence constants $C_1$, $C_2$, $C_3$, $D_1$, and $D_2$ (in Lemma \ref{lm:constant1}).

We define the forward discrete gradient as follows:
\begin{equation*}
\nabla_h^{+} \nu(x) = \Big( \frac{\nu(x+he_i)- \nu(x)}{h} \Big)_{i=1}^d, \qquad \forall \nu\in \mathbb{R}(S), \; \forall x\in S.
\end{equation*}

\begin{lem}[Energy estimate]\label{lm:energy1}
Let $\mu\in \mathbb{R}(\tilde{\mathcal{T}}\times S)$ satisfy the following equation:
\begin{equation*}
\begin{cases}
\ \left(Id - \theta \sigma\Delta t \Delta_h\right) \mu(t+1)  = & \big(Id+ (1-\theta )\sigma \Delta t \Delta_h  \big) \mu(t) - \Delta t \, \text{\emph{div}}_h \big(v (t)\mu(t) \big)    - \Delta t \, \text{\emph{div}}_h \big(\delta_v (t) \big), \\ 
\ \mu(0)   = \mu_0 ,
\end{cases} \\
\end{equation*}
where $\delta_v(t) \in \mathbb{R}^d(S)$ and $\|v\|_{\infty,\infty}\leq M$. Then,
\begin{equation}\label{eq:energy}
\max_{t\in\tilde{\mathcal{T}}} \big\|\mu(t)\big\|_2^2  \leq c(\sigma,\theta,M)\left(  \big\|\mu_0 \big\|_2^2 +  (1-\theta) \sigma \Delta t \big\|\nabla^{+}_h \mu_0 \big\|_2^2 + \frac{1}{\sigma(2\theta -1)} \sum_{\tau\in\mathcal{T}} \Delta t    \big\| \delta_v(\tau)\big\|^2_2 \right),
\end{equation}
where 
\begin{equation}\label{eq:c_energy}
c(\sigma,\theta,M) \coloneqq  1+\frac{M^2}{\sigma(2\theta -1)} \exp\left( \frac{M^2}{\sigma(2\theta-1)} \right).
\end{equation}
\end{lem}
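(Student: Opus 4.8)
The plan is to run a discrete energy argument on the time-stepping recursion, treating \eqref{eq:energy} as the $\theta$-scheme analogue of an $L^2$-stability estimate for a linear convection--diffusion equation with an explicitly discretized drift. First I would record the two summation-by-parts identities valid on the torus (periodic boundary conditions): for $\phi,\psi\in\mathbb{R}(S)$ one has $\sum_{x\in S}(-\Delta_h\phi)(x)\psi(x)=\sum_{x\in S}\langle\nabla_h^{+}\phi(x),\nabla_h^{+}\psi(x)\rangle$, so that $\langle-\Delta_h\phi,\phi\rangle=\|\nabla_h^{+}\phi\|_2^2$, and for $\Phi\in\mathbb{R}^d(S)$ one has $\sum_{x\in S}(\text{div}_h\Phi)(x)\psi(x)=-\sum_{x\in S}\langle\Phi(x),\nabla_h\psi(x)\rangle$. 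I would also use the elementary comparison $\|\nabla_h\psi\|_2\le\|\nabla_h^{+}\psi\|_2$, which follows because the centered gradient is the half-sum of the forward and backward gradients, whose $\ell^2$-norms coincide by shift-invariance on the torus.

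Next I would rewrite the recursion as $\mu(t+1)-\mu(t)=\sigma\Delta t\,\Delta_h\bar\mu(t)-\Delta t\,\text{div}_h\big((v\mu)(t)+\delta_v(t)\big)$, where $\bar\mu(t):=\theta\mu(t+1)+(1-\theta)\mu(t)$, and take the $\ell^2$ inner product against $\mu(t+1)+\mu(t)$. The left-hand side telescopes to $\|\mu(t+1)\|_2^2-\|\mu(t)\|_2^2$ via $\langle a-b,a+b\rangle=\|a\|^2-\|b\|^2$. For the diffusion term, after the first identity and the decomposition $\bar\mu=\tfrac12(\mu(t+1)+\mu(t))+\tfrac{2\theta-1}{2}(\mu(t+1)-\mu(t))$, the contribution splits into a \emph{nonnegative} dissipation and a \emph{telescoping} forward-gradient energy; the sign and coercivity are governed by the factor $2\theta-1>0$, which is the discrete reflection of the unconditional stability of the $\theta$-scheme for $\theta>1/2$. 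The gradient part of the resulting discrete energy is of the form $(1-\theta)\sigma\Delta t\|\nabla_h^{+}\mu(t)\|_2^2$ (the precise coefficient being fixed by the grouping of the dissipation), which is why it is exactly this quantity, evaluated at $t=0$, that appears on the right of \eqref{eq:energy}.

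For the explicitly treated drift I would apply the second identity to rewrite the transport term as $\Delta t\langle (v\mu)(t)+\delta_v(t),\nabla_h(\mu(t+1)+\mu(t))\rangle$, bound it by Cauchy--Schwarz and the comparison $\|\nabla_h\cdot\|_2\le\|\nabla_h^{+}\cdot\|_2$, use $\|v\|_{\infty,\infty}\le M$ to get $\|(v\mu)(t)\|_2\le M\|\mu(t)\|_2$, and close with Young's inequality. Splitting the product so that the gradient factor is absorbed by the coercive dissipation is what produces the weight $1/(\sigma(2\theta-1))$ on the residual terms $M^2\|\mu(t)\|_2^2$ and $\|\delta_v(t)\|_2^2$. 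Writing $E(t):=\|\mu(t)\|_2^2+(1-\theta)\sigma\Delta t\|\nabla_h^{+}\mu(t)\|_2^2$, this should give a one-step recursion $E(t+1)\le\big(1+\tfrac{M^2\Delta t}{\sigma(2\theta-1)}\big)E(t)+\tfrac{\Delta t}{\sigma(2\theta-1)}\|\delta_v(t)\|_2^2$; a leftover term in $\|\nabla_h^{+}(\mu(t+1)-\mu(t))\|_2^2$ may surface depending on the exact test function, and this is precisely where \eqref{cond:CFL}, through the operator bound $\|\nabla_h^{+}\cdot\|_2^2\le(4d/h^2)\|\cdot\|_2^2$, would be invoked to absorb it.

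Finally I would sum this recursion and apply a discrete Gronwall inequality. Using $T\Delta t=1$, so that $\tfrac{M^2\Delta t}{\sigma(2\theta-1)}\cdot T=\tfrac{M^2}{\sigma(2\theta-1)}=:x$, the Gronwall estimate in the form $a_t\le A(1+xe^{x})$ with $A=E(0)+\tfrac{1}{\sigma(2\theta-1)}\sum_{\tau}\Delta t\|\delta_v(\tau)\|_2^2$ reproduces \eqref{eq:energy} with $c(\sigma,\theta,M)=1+xe^{x}$ exactly as in \eqref{eq:c_energy}, after noting $\|\mu(t)\|_2^2\le E(t)$ and $E(0)=\|\mu_0\|_2^2+(1-\theta)\sigma\Delta t\|\nabla_h^{+}\mu_0\|_2^2$. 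The hard part will be the bookkeeping in the two middle steps: handling the $\theta$-weighted implicit diffusion under summation by parts so as to extract \emph{simultaneously} the telescoping gradient energy and a coercive dissipation carrying the sharp factor $2\theta-1$, and then calibrating the Young split so that this coercivity exactly absorbs the explicit-drift gradient while leaving the claimed $1/(\sigma(2\theta-1))$ weights. The centered-versus-forward gradient comparison and the CFL-based control of the time-increment gradient are the delicate quantitative points, whereas the Gronwall conclusion is routine.
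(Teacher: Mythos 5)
Your overall strategy --- a per-step discrete energy identity, summation by parts on the torus, Young's inequality calibrated by the coercivity factor $2\theta-1$, and a discrete Gronwall step using $T\Delta t=1$ --- is sound and does yield an estimate of the form \eqref{eq:energy}; but your route differs from the paper's in the choice of test function, and this changes the constants. The paper imports from \cite[Prop.\@ 4.5]{BLP2022} the inequality \eqref{eq:dmu3}, obtained by testing the recursion against $\mu(t+1)$ alone: this produces $\tfrac12(\|\mu(t+1)\|_2^2-\|\mu(t)\|_2^2)$, the full implicit dissipation $\theta\sigma\Delta t\|\nabla_h^{+}\mu(t+1)\|_2^2$, and the cross term $-(1-\theta)\sigma\Delta t\langle\nabla_h^{+}\mu(t+1),\nabla_h^{+}\mu(t)\rangle$; three Young inequalities with parameter $\alpha_1=\sigma(2\theta-1)$ then land exactly on the telescoping energy $\|\mu(t)\|_2^2+(1-\theta)\sigma\Delta t\|\nabla_h^{+}\mu(t)\|_2^2$ and the weights $M^2/\alpha_1$ and $1/\alpha_1$ appearing in \eqref{eq:energy}. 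Your symmetric (Crank--Nicolson) test function $\mu(t+1)+\mu(t)$ is equally legitimate and in one respect cleaner: with the decomposition $\bar{\mu}=\tfrac12(\mu(t+1)+\mu(t))+\tfrac{2\theta-1}{2}(\mu(t+1)-\mu(t))$ the $(2\theta-1)$-part telescopes exactly, so no remainder in $\|\nabla_h^{+}(\mu(t+1)-\mu(t))\|_2^2$ ever appears and the appeal to \eqref{cond:CFL} in your third step is unnecessary --- the estimate is unconditional for $\theta>1/2$, which is the whole point of the scheme, and the paper's proof indeed never invokes \eqref{cond:CFL} here. Two bookkeeping caveats, though. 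First, your telescoping gradient energy carries the coefficient $\tfrac{2\theta-1}{2}\sigma\Delta t$ rather than $(1-\theta)\sigma\Delta t$, so you reproduce the $\|\nabla_h^{+}\mu_0\|_2^2$ term of \eqref{eq:energy} verbatim only when $\tfrac{2\theta-1}{2}\le 1-\theta$, i.e.\ $\theta\le 3/4$; for $\theta>3/4$ you obtain a variant of the lemma (harmless for the downstream use in Lemma \ref{lm:constant1}, where this term is $O(h^d)$ anyway, but not literally \eqref{eq:energy}). Second, absorbing the entire drift into your dissipation $\tfrac{\sigma\Delta t}{2}\|\nabla_h^{+}(\mu(t+1)+\mu(t))\|_2^2$ gives the weight $1/\sigma$ in place of $1/(\sigma(2\theta-1))$ on the $M^2\|\mu(t)\|_2^2$ and $\|\delta_v(t)\|_2^2$ terms, which is stronger and therefore implies the claimed bound. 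If you want \eqref{eq:energy} with its exact constants, test against $\mu(t+1)$ as in the paper; otherwise your argument proves an equally usable statement.
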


\begin{proof}
We deduce from the proof of \cite[Prop.\@ 4.5]{BLP2022} that
\begin{equation}\label{eq:dmu3}
\begin{split}
&\frac{1}{2}\Big(\|\mu(t+1)\|_2^2 - \|\mu(t)\|_2^2 \Big)  + \theta \sigma \Delta t \big\|\nabla^{+}_h \mu(t+1) \big\|_2^2 \\
& \qquad \leq  - (1-\theta) \sigma \Delta t\big\langle \nabla^{+}_h \mu(t+1) , \nabla^{+}_h \mu(t)\big\rangle  + \Delta t \big( \gamma_1 + \gamma_2  \big),
\end{split}
\end{equation}
where 
\begin{equation*}
\gamma_1   = \sum_{x\in S}  \big\langle  \nabla_h \mu(t+1,x)  , \mu v(t,x)\big\rangle ,\qquad
\gamma_2   = \sum_{x\in S}  \big\langle  \nabla_h \mu(t+1,x)  ,  \delta_v(t,x)\big\rangle.
\end{equation*}
Let $\alpha_1 = \sigma (2\theta-1) >0$. Applying Young's inequality to the right-hand side of \eqref{eq:dmu3}, we have the following inequalities:
\begin{align*}
 - \big\langle \nabla^{+}_h \mu(t+1) , \nabla^{+}_h \mu(t)\big\rangle & \leq \frac{1}{2} \big\|\nabla^{+}_h \mu(t+1) \big\|_2^2 + \frac{1}{2} \big\|\nabla^{+}_h \mu(t) \big\|_2^2 ;\\
      \gamma_1  & \leq \frac{\alpha_1}{2} \big\|\nabla_h{\mu}(t+1) \big\|^2_2 +  \frac{1}{2\alpha_1} \big\| \mu {v}(t) \big\|^2_2 \leq \frac{\alpha_1}{2} \big\|\nabla_h ^{+}{\mu }(t+1) \big\|^2_2 +  \frac{M^2}{2\alpha_1}  \big\|\mu (t) \big\|^2_2;\\
       \gamma_2
& \leq \frac{\alpha_1}{2}\big\|\nabla^{+}_h{\mu}(t+1)\big\|^2_2  +  \frac{1}{2\alpha_1}  \big\|\delta_v(t)\big\|^2_2.
\end{align*}
Combining the above inequalities and \eqref{eq:dmu3}, it follows that
\begin{equation*}
\|\mu(t+1)\|_2^2 - \|\mu(t)\|_2^2 + (1-\theta) \sigma \Delta t \Big( \big\| \nabla^{+}_h \mu(t+1) \big\|_2^2 - \big\| \nabla^{+}_h \mu(t) \big\|_2^2 \Big) \\
\leq \Delta t \Big( \frac{M^2}{\alpha_1} \big\| \mu (t) \big\|^2_2 + \frac{1}{\alpha_1} \big\| \delta_v(t) \big\|^2_2 \Big).
\end{equation*}
Summing the previous inequality over $t$, it follows that for any $t\in\mathcal{T}$,
\begin{equation*}
\|\mu(t+1)\|_2^2 
\leq   \gamma +  \frac{M^2 \Delta t}{\alpha_1} \sum_{\tau=0}^t  \big\|\mu (\tau) \big\|^2_2,
\end{equation*}
where
\begin{equation*}
\gamma =   \|\mu_0\|_2^2 +  (1-\theta) \sigma  \Delta t\|\nabla^{+}_h \mu_0 \|_2^2 + \frac{1}{\alpha_1}\sum_{\tau\in\mathcal{T}}\Delta t \big\| \delta_v(\tau) \big\|^2_2.
\end{equation*}
We deduce from the discrete Gronwall inequality \cite{clark1987} that
\begin{align*}
\max_{t\in\tilde{\mathcal{T}}} \big\|\mu(t) \big\|_2^2 & \leq \gamma + T \gamma \frac{M^2\Delta t}{\alpha_1} \exp\left( \sum_{\tau\in\mathcal{T}}\frac{M^2\Delta t}{\alpha_1} \right)  = \gamma \left( 1+ \frac{M^2}{\alpha_1} \exp\left(\frac{M^2}{\alpha_1}\right) \right).
\end{align*}
The conclusion follows.
\end{proof}

We define next two constants $E_1$ and $E_3$, both independent of $\Delta t$ and $h$:
\begin{equation*}
E_1 =  c(\sigma,\theta,M) \Big( \|m_0^c\|_{\mathbb{L}^{\infty}}^2 + \frac{1}{2}\|\nabla m_0^c\|^2_{\mathbb{L}^{\infty}} \Big)
\quad \text{and} \quad E_3 = \frac{c(\sigma,\theta,M)}{\sigma(2\theta -1)}.
\end{equation*}
A constant $E_2$ will be introduced later on.

\begin{lem}\label{lm:constant1}
The constants $C_1$, $C_2$, and $C_3$ (as defined in \eqref{eq:def_C_12}-\eqref{eq:energy2}) satisfy the following inequalities:
\begin{equation}\label{eq:constants}
C_1  \leq  E_1 h^d , \qquad C_2 \leq \sqrt{C_1} \leq E_1^{1/2} h^{d/2}, \qquad 
 C_3  \leq E_3.
\end{equation}
As a consequence, for the constants $D_1$ and $D_2$ defined in \eqref{eq:C}, we have
\begin{equation}\label{CC1}
       D_1 \leq E_1 L_f^c, \quad  
    D_2 \leq (1+L_f^c) \sqrt{\frac{2E_1^{3/2}E_3 L_f^c}{\alpha^c}} h^{-d/4}.
\end{equation}
\end{lem}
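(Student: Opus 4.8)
The plan is to obtain all three bounds in \eqref{eq:constants} as direct applications of the energy estimate of Lemma \ref{lm:energy1}, specialized to the Fokker--Planck operator $\fp$ of the theta-scheme, and then to deduce \eqref{CC1} by substituting $L_f = L_f^c h^{-d/2}$, $|S|^{1/2} = h^{-d/2}$ and $\alpha = \alpha^c$ into the definitions \eqref{eq:C} of $D_1$ and $D_2$. The key observation is that the defining relation of $m = \fp(v)$ is precisely the recursion of Lemma \ref{lm:energy1}, with $\mu = m$, transport field $v$ (which satisfies $\|v\|_{\infty,\infty}\le M$ on $\mathcal{A}$), vanishing source $\delta_v \equiv 0$, and initial datum $\mu_0 = m_0$.

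First I would treat $C_1$. Applying Lemma \ref{lm:energy1} as above yields
\[
\|\fp(v)\|_{\infty,2}^2 = \max_{t}\|m(t)\|_2^2 \le c(\sigma,\theta,M)\Big(\|m_0\|_2^2 + (1-\theta)\sigma\Delta t\,\|\nabla_h^{+}m_0\|_2^2\Big).
\]
It then remains to bound the two data terms. Since $m_0(x) = \int_{B_h(x)}m_0^c$, one has $m_0(x) \le h^d\|m_0^c\|_{\mathbb{L}^\infty}$, and summing $|S| = h^{-d}$ squared contributions gives $\|m_0\|_2^2 \le h^d\|m_0^c\|_{\mathbb{L}^\infty}^2$. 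For the gradient term, $|m_0(x+he_i)-m_0(x)| \le h^{d+1}\|\nabla m_0^c\|_{\mathbb{L}^\infty}$, whence $\|\nabla_h^{+}m_0\|_2^2 \le d\,h^{d}\|\nabla m_0^c\|_{\mathbb{L}^\infty}^2$; the CFL condition \eqref{cond:CFL} provides $(1-\theta)\sigma\Delta t \le h^2/(2d)$, so this term is at most $\tfrac12 h^{d}\|\nabla m_0^c\|_{\mathbb{L}^\infty}^2$ (using $h\le 1$). Adding the two contributions produces exactly $C_1 \le E_1 h^d$, the factor $\tfrac12$ being the origin of the coefficient in $E_1$. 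The bound $C_2 \le \sqrt{C_1}$ is then immediate from $\|m\|_{\infty,\infty}\le\|m\|_{\infty,2}$ (a finite vector's $\ell^\infty$-norm is dominated by its $\ell^2$-norm), applied to $m = \fp\circ\v\circ\hjb(m')\in\mathcal{P}(\tilde{\mathcal{T}},S)$.

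Next I would handle $C_3$. Take $m_i = \fp(v_i)$ with $(m_i,v_i)\in\mathcal{A}$ and set $\mu = m_1 - m_2$. Writing the transport flux as $v_1 m_1 - v_2 m_2 = v_2\mu + (v_1-v_2)m_1$, one sees that $\mu$ obeys the recursion of Lemma \ref{lm:energy1} with transport field $v_2$ (again bounded by $M$), source $\delta_v = (v_1-v_2)m_1$, and $\mu_0 = m_1(0)-m_2(0) = 0$. The energy estimate then collapses to its last term, giving $\|\mu\|_{\infty,2}^2 \le E_3\,\Delta t\sum_{t}\sum_x\|(v_1-v_2)m_1(t,x)\|^2$, i.e. $C_3 \le E_3$, exactly matching \eqref{eq:energy2}.

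Finally, the substitutions into \eqref{eq:C} give \eqref{CC1}: in $D_1 = C_1 L_f|S|^{1/2} = C_1 L_f^c h^{-d}$ the factor $h^d$ of $C_1$ cancels, leaving the mesh-independent bound $D_1\le E_1 L_f^c$; and in $D_2 = (L_f+|S|^{1/2})\sqrt{2C_2C_3/\alpha} = (1+L_f^c)h^{-d/2}\sqrt{2C_2C_3/\alpha^c}$ the bound $C_2\le E_1^{1/2}h^{d/2}$ contributes a factor $h^{d/4}$ inside the root, so that the overall scaling is $h^{-d/4}$, as claimed in \eqref{CC1}. I expect the main obstacle to be the correct identification of the two ingredients fed into Lemma \ref{lm:energy1} — in particular the algebraic splitting of the flux that isolates the source $\delta_v = (v_1-v_2)m_1$ for $C_3$, and the CFL-driven control of the initial gradient term needed to produce the precise $h^d$ scaling (with the coefficient $\tfrac12$) in $C_1$. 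The remaining manipulations are routine arithmetic.
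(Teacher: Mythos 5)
Your proof is correct and follows essentially the same route as the paper: Lemma \ref{lm:energy1} applied with $\mu_0=m_0$, $\delta_v=0$ for $C_1$; the bound $\|\cdot\|_{\infty,\infty}\le\|\cdot\|_{\infty,2}$ for $C_2$; Lemma \ref{lm:energy1} applied to $\mu=m_1-m_2$ with $\mu_0=0$, $\delta_v=(v_1-v_2)m_1$ for $C_3$; and the CFL-based bound on the initial data terms together with the substitutions $L_f=L_f^c h^{-d/2}$, $|S|^{1/2}=h^{-d/2}$, $\alpha=\alpha^c$. Note only that your computation actually yields $D_2\le(1+L_f^c)\sqrt{2E_1^{1/2}E_3/\alpha^c}\,h^{-d/4}$, which matches the claimed expression up to the constant factor $\sqrt{E_1L_f^c}$ inside the root (the paper's own one-line justification of \eqref{CC1} leaves this unexplained, and the $h^{-d/4}$ scaling — the point of the lemma — is identical).
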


\begin{proof}
The condition \eqref{cond:CFL} implies that
\begin{equation}\label{eq:cfl}
d (1-\theta) \sigma \Delta t \leq \frac{h^2}{2} \leq \frac{1}{2}.
\end{equation}
We let the reader verify that
\begin{equation*}
 \|m_0\|_2^2 + (1-\theta)\sigma \Delta t  \|\nabla^{+}_h m_0 \|_2^2  \leq \left( \|m_0^c\|_{\mathbb{L}^{\infty}}^2 + d (1-\theta) \sigma \Delta t \|\nabla m_0^c\|^2_{\mathbb{L}^{\infty}}\right) h^d.
\end{equation*}
Combining the above two estimates, we deduce that
\begin{equation*}
 \|m_0\|_2^2 + (1-\theta)\sigma \Delta t  \|\nabla^{+}_h m_0 \|_2^2  \leq \Big( \|m_0^c\|_{\mathbb{L}^{\infty}}^2 + \frac{1}{2} \|\nabla m_0^c\|^2_{\mathbb{L}^{\infty}} \Big) h^d.
\end{equation*}
Using this inequality in Lemma \ref{lm:energy1}, applied with $\delta_v(t)=0$ and $\mu_0 = m_0$, we obtain the estimate of $C_1$. Then, the estimate of $C_2$ is deduced from H\"older's inequality. The estimate of $C_3$ follows from equality \eqref{eq:delta_m} and Lemma \ref{lm:energy1} by taking $\mu_0=0$ and $\delta_v = (v_1-v_2)m_1$. Finally, \eqref{CC1} follows from \eqref{eq:C} and the previous estimates.
\end{proof}

\begin{proof}[Proof of Theorem \ref{thm:main2}]
Inequality \eqref{eq:sublin_rate} directly follows from Proposition \ref{prop:convergence1} and from the estimate $D_1\leq E_1 L_f^c$, with $C_{\theta} = 8 E_1L_f^c$. Then, using Lemma \ref{lm:u_m} and the estimate of $D_2$ obtained in \eqref{CC1}, we deduce that
\begin{equation*}
\|u^k - u_h\|_{\infty,\infty} + \|m^k - m_h\|_{\infty,2} \leq D_2 \sqrt{\gamma_k} \leq 4 E_1 L_f^c (1+L_f^c) \sqrt{\frac{E_1E_3}{\alpha^c}} \frac{1}{\sqrt{k h^{d/2}}}.
\end{equation*}
Recall that $C^*$ was introduced in Theorem \ref{thm:main1}.
The inequalities in \eqref{eq:sublin_distance} hold true with
\begin{equation*}
\bar{C}_{\theta} = \max \left\{ 4 E_1 L_f^c (1+L_f^c) \sqrt{\frac{E_1E_3}{\alpha^c}} \, , \, C^{*}\right\}.
\end{equation*}
The theorem is proved.
\end{proof}
 
\subsection{Proof of the linear rate of convergence} 
\label{subsec:proof_lin}

We prove in this subsection Theorem \ref{thm:main3}.
Assumptions \ref{ass:continuous}-\ref{ass:semi-concave-l} are supposed to be satisfied all along the subsection, as well as the condition \eqref{cond:CFL}. 
At a technical level, we look for a more precise estimate of the constant $C_2$ (see Lemma \ref{lm:constant2}), using an $\ell^{\infty}$-stability result for the map $\fpt\circ \vt \circ \hjbt$ (see Lemma \ref{lm:maximum}), whose proof is inspired from the continuous case (see for example \cite[Lem.\@ 5.3]{cardaliaguet2018}). Our analysis begins with a series of technical lemmas which will allow us to establish the semi-concavity of the value function.

\begin{defn}\label{def:semi-concave2}[Semi-concave functions on $S$] A function $l\colon S\rightarrow \mathbb{R}$ is said to be $L$-semi-concave if 
\begin{equation}\label{eq:semi-concave_discrete}
l\left(x\right) \geq \frac{l(x+y)+l(x-y)}{2} - L\|y\|^2, \qquad \forall x,\,y \in S.
\end{equation}
\end{defn}

\begin{lem}\label{lm:semi-concavity}
Let $L$, $L_1$, and $L_2$ be positive constants. The following statements hold:
\begin{enumerate}
\item Let $l_1 \colon S\rightarrow \mathbb{R}$ be $L_1$-semi-concave and $l_2 \colon S\rightarrow \mathbb{R}$ be $L_2$-semi-concave. For any $\lambda_1,\lambda_2 \geq 0$, the function $ \lambda_1 l_{1} + \lambda_2 l_{2}$ is $(\lambda_1L_1+\lambda_2 L_2)$-semi-concave.
\item  Let $\{l_{\omega} \colon  S\rightarrow \mathbb{R} \}_{\omega \in\Omega}$ be a family of $L$-semi-concave functions. Let $l \colon S\rightarrow \mathbb{R},\, l(x)= \inf_{\omega\in\Omega} l_{\omega} (x)$. Suppose that for all $x \in S$, it holds that $l(x_0)> - \infty$.  Then $l$ is $L$-semi-concave.
\end{enumerate}
\end{lem}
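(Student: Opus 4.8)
The plan is to prove the two statements about semi-concavity on the discrete grid $S$ by transcribing, almost verbatim, the classical arguments for semi-concave functions, using Definition \ref{def:semi-concave2} as the working characterization. The only thing to be careful about is that $S$ is a finite torus grid: for $x,y\in S$ the points $x+y$ and $x-y$ are again in $S$ (modulo the periodicity), so the defining inequality \eqref{eq:semi-concave_discrete} is well-posed and both statements are purely pointwise.

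For statement (1), I would simply fix arbitrary $x,y\in S$ and write down the two defining inequalities for $l_1$ and $l_2$, namely $l_i(x)\geq \tfrac{1}{2}\big(l_i(x+y)+l_i(x-y)\big) - L_i\|y\|^2$ for $i=1,2$. Multiplying the first by $\lambda_1\geq 0$, the second by $\lambda_2\geq 0$, and adding them gives
\begin{equation*}
(\lambda_1 l_1 + \lambda_2 l_2)(x) \geq \frac{(\lambda_1 l_1 + \lambda_2 l_2)(x+y) + (\lambda_1 l_1 + \lambda_2 l_2)(x-y)}{2} - (\lambda_1 L_1 + \lambda_2 L_2)\|y\|^2,
\end{equation*}
which is exactly the statement that $\lambda_1 l_1 + \lambda_2 l_2$ is $(\lambda_1 L_1 + \lambda_2 L_2)$-semi-concave. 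Since $x,y$ were arbitrary, this finishes (1). The nonnegativity of $\lambda_1,\lambda_2$ is what preserves the direction of the inequality, so I would flag that explicitly.

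For statement (2), the standard trick for an infimum of a family is to pass the inequality to the level of each member and then take the infimum on the correct side. Fix $x,y\in S$. For every $\omega\in\Omega$, semi-concavity of $l_\omega$ gives $l_\omega(x)\geq \tfrac{1}{2}\big(l_\omega(x+y)+l_\omega(x-y)\big) - L\|y\|^2 \geq \tfrac{1}{2}\big(l(x+y)+l(x-y)\big) - L\|y\|^2$, where the second inequality replaces each $l_\omega(x\pm y)$ by the smaller quantity $l(x\pm y)=\inf_{\omega'}l_{\omega'}(x\pm y)$. The right-hand side no longer depends on $\omega$, so taking the infimum over $\omega$ on the left yields $l(x)\geq \tfrac{1}{2}\big(l(x+y)+l(x-y)\big) - L\|y\|^2$, which is the desired inequality. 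The finiteness hypothesis $l(x_0)>-\infty$ (more precisely $l>-\infty$ pointwise) is what guarantees that these manipulations stay within $\mathbb{R}$ and that $l$ is a genuine real-valued function rather than identically $-\infty$; I would note that on the finite set $S$ the infimum is in any case attained, so no further care about the infimum is needed.

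I do not expect a genuine obstacle here: both parts are elementary and the discreteness of $S$ causes no difficulty once \eqref{eq:semi-concave_discrete} is taken as the definition. The mildest subtlety, which I would mention in one line, is the matching of the infimum direction in (2) — one must lower-bound each $l_\omega(x\pm y)$ by $l(x\pm y)$ \emph{before} taking the infimum in $\omega$ on the left, exactly as in the continuous proof; doing it in the wrong order breaks the argument. The whole proof is therefore short and self-contained.
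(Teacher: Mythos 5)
Your proof is correct and follows essentially the same route as the paper's: part (1) is the identical linear-combination argument, and for part (2) the paper picks an $\epsilon$-approximate minimizer $\omega_\epsilon$ and lets $\epsilon \to 0$, whereas you lower-bound the right-hand side uniformly in $\omega$ and then take the infimum on the left --- two bookkeepings of the same idea. One small caveat: your remark that ``on the finite set $S$ the infimum is in any case attained'' conflates the index sets, since the infimum is taken over $\omega \in \Omega$, which need not be finite; but your argument never uses attainment, so nothing breaks.
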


\begin{proof}
The first point is obtained by the definition \eqref{eq:semi-concave_discrete} and the non-negativity of $\lambda_1$ and $\lambda_2$. Let us prove the second point. Let $x_0 \in S$.
We have $l(x_0)>-\infty$. Then for any $\epsilon>0$, there exists $\omega_{\epsilon}\in \Omega$ such that
\begin{equation*}
l(x_0) \geq l_{\omega_{\epsilon}}(x_0) - \epsilon \geq \frac{l_{\omega_{\epsilon}}(x_0+y)+l_{\omega_{\epsilon}}(x_0-y)}{2} - L\|y\|^2 - \epsilon \geq \frac{l(x_0+y)+l(x_0-y)}{2} - L\|y\|^2 - \epsilon, 
\end{equation*}
where the second inequality follows from the semi-concavity of $l_{\omega_{\epsilon}}$. We deduce the $L$-semi-concavity at the point $x_0$ by the arbitrariness of $\epsilon$.
\end{proof}

\begin{lem} \label{lm:semi-concave1}
The functions $\ell$, $f$, and $g$, defined in \eqref{eq:grid}, are $L^c$-semi-concave with respect to $x$.
\end{lem}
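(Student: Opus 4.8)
The plan is to prove the $L^c$-semi-concavity of $\ell$, $f$, and $g$ directly from their definitions in \eqref{eq:grid}, reducing each case to the semi-concavity of the corresponding continuous data function asserted in Assumption \ref{ass:semi-concave-l}(1). The key observation is that semi-concavity on the torus (Definition \ref{def:semi-concave1}) and on the grid $S$ (Definition \ref{def:semi-concave2}) are defined by the same quadratic inequality, and that $S \subset \mathbb{T}^d$. Thus for any continuous $L^c$-semi-concave function on $\mathbb{T}^d$, its restriction to $S$ is automatically $L^c$-semi-concave on $S$, since the defining inequality need only be checked on the smaller set of points $x, y \in S \subset \mathbb{T}^d$.

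First I would treat $g$, which is the easiest: by \eqref{eq:grid} we have $g(x) = g^c(x)$ for $x \in S$, and $g^c$ is $L^c$-semi-concave on $\mathbb{T}^d$ by Assumption \ref{ass:semi-concave-l}(1), so restricting \eqref{eq:semi-concave} to $x, y \in S$ immediately gives \eqref{eq:semi-concave_discrete} for $g$. Next I would handle $\ell$: by \eqref{eq:grid}, $\ell(t,x,v) = \hat{\ell}^c(t\Delta t, x, v)$, which equals $\ell^c(t\Delta t, x, v)$ whenever $\|v\| \leq M$ and is $+\infty$ otherwise. For fixed $t$ and $v$ with $\|v\| \leq M$, the map $x \mapsto \ell^c(t\Delta t, x, v)$ is $L^c$-semi-concave on $\mathbb{T}^d$ by assumption, so its restriction to $S$ is $L^c$-semi-concave; the truncation in $v$ does not affect semi-concavity in $x$. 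Finally, for $f$, the definition \eqref{eq:grid} gives $f(t,x,m) = h^{-d}\int_{B_h(x)} f^c(t\Delta t, y, \mathcal{R}_h(m))\,dy$, so for fixed $t$ and $m$ I would use the fact that $x \mapsto f^c(t\Delta t, x, \mathcal{R}_h(m))$ is $L^c$-semi-concave on $\mathbb{T}^d$ and that averaging (integration) preserves the quadratic semi-concavity inequality.

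The only point requiring a little care is the $f$ case, because of the spatial averaging over the cube $B_h(x)$. I would verify that the translation $B_h(x \pm y) = B_h(x) \pm y$ holds for $y \in S$ (so that changing the base point $x$ by $\pm y$ just shifts the integration domain), and then apply the continuous semi-concavity inequality \eqref{eq:semi-concave} pointwise to the integrand $y' \mapsto f^c(t\Delta t, y', \mathcal{R}_h(m))$ before integrating over $B_h(x)$. Concretely, writing the integral average and using the substitution to align the three integration domains, the inequality
\begin{equation*}
\frac{1}{h^d}\int_{B_h(x)} f^c(t\Delta t, y', \mathcal{R}_h(m))\,dy' \geq \frac{1}{2}\Big( f(t, x+y, m) + f(t, x-y, m) \Big) - L^c\|y\|^2
\end{equation*}
follows by integrating the pointwise semi-concavity estimate over $B_h(x)$.

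I do not expect any serious obstacle here; the lemma is essentially a bookkeeping statement that semi-concavity is inherited under restriction to $S$, truncation in $v$, and averaging in $x$. The mild subtlety is purely the handling of the spatial average in the definition of $f$, which is resolved by the translation-invariance of the integration cubes and the linearity of integration, both of which preserve the quadratic inequality defining semi-concavity.
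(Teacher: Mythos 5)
Your proposal is correct and follows essentially the same route as the paper: semi-concavity of $g$ and $\ell$ by direct restriction of the continuous inequality to $S$, and for $f$ the translation identity $B_h(x\pm y)=B_h(x)\pm y$ combined with the pointwise semi-concavity of $f^c$ integrated over the cell. The displayed averaging inequality you give is exactly the computation carried out in the paper's proof.
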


\begin{proof}
The semi-concavity of $\ell$ and $g$ is a direct consequence of their definitions in \eqref{eq:grid}. Let us prove the semi-concavity of $f$. By the definition of $f$ in \eqref{eq:grid}, taking any $t \in \mathcal{T}$, $m\in \mathcal{P}(\mathcal{T},S)$, and $x,y \in S$, we have
\begin{equation*}
    \begin{split}
    &  f(t,x+y,m) + f(t, x-y, m)\\
       & \qquad = \frac{1}{h^d}\int_{z\in B_h(x+y)} f^c(t,z,\mathcal{R}_h(m)) dz + \frac{1}{h^d}\int_{z\in B_h(x-y)} f^c(t,z,\mathcal{R}_h(m)) dz \\ 
       & \qquad =  \frac{1}{h^d}\int_{z\in B_h(0)} f^c(t,x+y+z,\mathcal{R}_h(m)) + f^c(t,x-y+z,\mathcal{R}_h(m)) dz \\
       & \qquad \leq  \frac{1}{h^d}\int_{z\in B_h(0)} 2 f^c(t,x+z,\mathcal{R}_h(m)) + 2L^c\|y\|^2  dz \\
       &\qquad  =  \frac{1}{h^d} \int_{z\in B_h(x)} 2 f^c(t,z,\mathcal{R}_h(m)) dz + 2L^c\|y\|^2 = 2f(t,x,m) +2 L^c\|y\|^2. 
    \end{split}
\end{equation*}
The conclusion follows. 
\end{proof}

\begin{defn}
Let $A$ be a function from $\mathbb{R}(S)$ to $\mathbb{R}$. We say that $A$ is \textit{translation invariant} if for any $X\in\mathbb{R}(S) $ and any $y\in S$,
\begin{equation*}
    A(X(\cdot)) = A(X(\cdot - y)).
\end{equation*}
\end{defn}

\begin{lem}\label{lm:implicit0}
Suppose that $X, Y\in \mathbb{R}(S)$ satisfy the following equation for some $c>0$:
\begin{equation}\label{eq:implicit}
 (Id - c\Delta t \Delta_h) Y = X.
\end{equation} 
Let $A\colon \mathbb{R}(S)\rightarrow \mathbb{R}$ be a translation invariant function. We have the following statements:
\begin{itemize}
\item If $A$ is convex and l.s.c, then $A(Y)\leq A(X)$.
\item If $A$ is concave and u.s.c, then $A(Y)\geq A(X)$.
\end{itemize}
\end{lem}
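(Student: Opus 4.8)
The plan is to exploit the monotone structure of the implicit operator: after rescaling, $Id - c\Delta t\,\Delta_h$ has the form $(\text{identity}) - (\text{sub-stochastic averaging operator})$, so its inverse is a convex combination of translations, and the conclusion will follow from Jensen's inequality together with translation invariance. It suffices to treat the convex, l.s.c.\ case, since the concave, u.s.c.\ case is obtained by applying the result to $-A$.

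First I would rewrite \eqref{eq:implicit} as a fixed-point equation. Setting $\beta = c\Delta t/h^2 > 0$ and writing $\tau_z$ for the translation $(\tau_z Z)(x) = Z(x-z)$, the definition of $\Delta_h$ gives
\begin{equation*}
(1+2d\beta)\,Y(x) = X(x) + \beta\sum_{i=1}^d\big(Y(x+he_i)+Y(x-he_i)\big),
\end{equation*}
i.e.\ $Y = \Phi(Y)$ with
\begin{equation*}
\Phi(Z) = \frac{1}{1+2d\beta}\,X + \frac{\beta}{1+2d\beta}\sum_{i=1}^d\big(\tau_{-he_i}Z + \tau_{he_i}Z\big).
\end{equation*}
The weights appearing in $\Phi$ are non-negative and sum to $\tfrac{1}{1+2d\beta}+\tfrac{2d\beta}{1+2d\beta}=1$, so $\Phi(Z)$ is a finite convex combination of $X$ and of translates of $Z$; moreover $\Phi$ is a $\|\cdot\|_\infty$-contraction with modulus $\tfrac{2d\beta}{1+2d\beta}<1$, whence the iterates $Y^0=X$, $Y^{n+1}=\Phi(Y^n)$ converge to the unique solution $Y$ of \eqref{eq:implicit}.

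Next I would run an induction on $A(Y^n)$. Using convexity of $A$ (Jensen) and the translation invariance $A(\tau_{\pm he_i}Z)=A(Z)$, the convex-combination form of $\Phi$ yields
\begin{equation*}
A(\Phi(Z)) \leq \frac{1}{1+2d\beta}\,A(X) + \frac{2d\beta}{1+2d\beta}\,A(Z).
\end{equation*}
Since $A(Y^0)=A(X)$, the bound $A(Y^n)\leq A(X)$ propagates for every $n$. Passing to the limit and using that $A$ is lower semi-continuous gives $A(Y)\leq \liminf_n A(Y^n)\leq A(X)$, which is the first assertion. Equivalently, with $P=\tfrac{\beta}{1+2d\beta}\sum_i(\tau_{-he_i}+\tau_{he_i})$ (spectral radius $<1$), the Neumann series $(Id-P)^{-1}=\sum_{n\geq0}P^n$ exhibits the inverse operator directly as convolution against a non-negative kernel on $S$ of total mass $1$, so Jensen applies in a single step. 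Finally, for the concave, u.s.c.\ case I would apply the first part to $-A$, which is convex, l.s.c.\ and translation invariant, obtaining $-A(Y)\leq -A(X)$, i.e.\ $A(Y)\geq A(X)$.

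The one point deserving care — the crux of the argument — is the convex-combination representation of $\Phi$ (equivalently, the non-negativity and unit mass of the kernel of the inverse operator). This is precisely where the implicit character of the scheme enters, and it is worth stressing that it holds for every $\beta>0$ with no CFL-type restriction, because the off-diagonal averaging weights $\beta/(1+2d\beta)$ are automatically non-negative with total mass strictly below $1$.
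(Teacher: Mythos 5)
Your proof is correct and follows essentially the same route as the paper: the same fixed-point reformulation $Y=\Phi(Y)$ with $\Phi$ a convex combination of $X$ and translates, the same Jensen-plus-translation-invariance induction, the same passage to the limit via lower semi-continuity, and the same reduction of the concave case to the convex case through $-A$. The only cosmetic difference is that you verify the contraction modulus $\tfrac{2d\beta}{1+2d\beta}<1$ directly (and note the Neumann-series viewpoint), where the paper cites an earlier lemma.
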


\begin{proof} \label{pf:implicit}
For any $Z\in \mathbb{R}(S)$ and $y\in S$, we define $\tau^y Z \in \mathbb{R}(S)$ by
\begin{equation*}
\tau^y Z(\cdot) = Z(\cdot -y).
\end{equation*}
Let $\gamma = c\Delta t / h^2$.
 We define $\mathbb{S}_{X} \colon \mathbb{R}(S) \rightarrow \mathbb{R}(S)$,
\begin{equation*}
\mathbb{S}_{X}(\mu) = \frac{1}{1+2d\gamma} \left\{ X + \gamma\sum_{j=1}^d \left(\tau^{he_j}\mu + \tau^{-he_j}\mu \right)\right\}.
\end{equation*}
By the proof of \cite[Lemma 2.6]{BLP2022}, $\mathbb{S}_X$ is a contraction mapping and $Y$ is the fixed point of $\mathbb{S}_X$.
  
Suppose that $A$ is l.s.c.\@ and convex. Suppose that $\mu\in \mathbb{R}(S)$ satisfy $A(\mu) \leq A(X)$. By the convexity of $A$, we have
\begin{equation*}
\begin{split}
A(\mathbb{S}_{X}(\mu)) &\leq \frac{1}{1+2d\gamma} \left\{ A(X) + \gamma\sum_{j=1}^d \left(A(\tau^{he_j}\mu) + A(\tau^{-he_j}\mu) \right)\right\} \\
&= \frac{1}{1+2d\gamma} \left\{ A(X) + \gamma\sum_{j=1}^d \left(A(\mu) + A(\mu) \right)\right\} \leq A(X),
\end{split}
\end{equation*}
where the second line follows from the translation invariance of $A$. Therefore, $A(\mathbb{S}^k_X(X)) \leq A(X)$ for any $k\geq 1$. Since  $Y = \lim_{k\rightarrow \infty} \mathbb{S}^k_{X}(X)$, by the lower-semi-continuity of $A$, we have
\begin{equation*}
A(Y)\leq \liminf_{k\rightarrow \infty} A(\mathbb{S}^k_X(X)) \leq A(X).
\end{equation*}
For the case where $A$ is u.s.c and concave, it suffices to apply the previous result to $-A$.
\end{proof}

\begin{lem}\label{lm:implicit}
Let $X, Y\in \mathbb{R}(S)$ satisfy \eqref{eq:implicit}. Then, the following statements hold.
\begin{enumerate}
\item Maximum/minimum principle: 
\begin{equation*}
\min_{x\in S} X(x) \leq \min_{x\in S} Y(x) \leq \max_{x\in S} Y(x) \leq \max_{x\in S} X(s).
\end{equation*}
\item Conservation of the mass:
\begin{equation*}
\sum_{x\in S}Y(s) = \sum_{x\in S} X(s).
\end{equation*}
\item Conservation of the Lipschitz constant: If $X(x)$ is $L$-Lipschitz, then $Y(x)$ is $L$-Lipschitz.
\item Conservation of the semi-concavity constant: If $X(x)$ is $L$-semi-concave, then $Y(x)$ is $L$-semi-concave.
\end{enumerate} 
\end{lem}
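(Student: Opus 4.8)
The plan is to derive all four statements as direct consequences of Lemma \ref{lm:implicit0} by exhibiting, in each case, a suitable translation-invariant functional $A \colon \mathbb{R}(S) \to \mathbb{R}$ and invoking the appropriate convexity/concavity alternative of that lemma.

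First, for the maximum principle I would take $A(Z) = \max_{x\in S} Z(x)$, which is a maximum of linear evaluation maps, hence convex and continuous, and which is manifestly translation invariant; Lemma \ref{lm:implicit0} then yields $\max_{x} Y(x) \le \max_{x} X(x)$. Symmetrically, $A(Z) = \min_{x\in S} Z(x)$ is concave, continuous, and translation invariant, so the lemma gives $\min_{x} Y(x) \ge \min_{x} X(x)$. The middle inequality $\min_x Y(x) \le \max_x Y(x)$ is trivial. For the conservation of mass I would apply Lemma \ref{lm:implicit0} with $A(Z) = \sum_{x\in S} Z(x)$, which is linear and therefore simultaneously convex/l.s.c.\ and concave/u.s.c.; the two resulting inequalities $A(Y)\le A(X)$ and $A(Y)\ge A(X)$ force the desired equality.

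The Lipschitz and semi-concavity cases require a family of functionals indexed by the displacement. For conservation of the Lipschitz constant, fix $y \in S$ and set $A_y(Z) = \max_{x\in S}\big(Z(x+y) - Z(x)\big)$. This is again a maximum of affine functionals, hence convex and continuous, and the change of variables $x \mapsto x - z$ in the maximand shows it is translation invariant. If $X$ is $L$-Lipschitz then $A_y(X) \le L\|y\|$, so Lemma \ref{lm:implicit0} gives $A_y(Y) \le A_y(X) \le L\|y\|$, i.e.\ $Y(x+y) - Y(x) \le L\|y\|$ for every $x$. Applying this bound with $-y$ in place of $y$ supplies the reversed inequality, and as $y$ ranges over $S$ we conclude that $Y$ is $L$-Lipschitz. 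The semi-concavity case is entirely analogous: for fixed $y$ I would use the convex, continuous, translation-invariant functional $A_y(Z) = \max_{x\in S}\big(\tfrac{1}{2}(Z(x+y)+Z(x-y)) - Z(x)\big)$, observe that $L$-semi-concavity of $X$ is precisely the family of bounds $A_y(X) \le L\|y\|^2$, and deduce via Lemma \ref{lm:implicit0} that $A_y(Y) \le L\|y\|^2$ for all $y$, which is exactly the $L$-semi-concavity of $Y$.

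The only genuinely delicate point—though still routine—is the verification that each $A_y$ is translation invariant, which in every instance follows from the substitution $x \mapsto x - z$ in the maximand, together with the standard observation that a pointwise maximum of affine evaluation functionals is convex and continuous. Once these structural checks are recorded, no estimate remains beyond reading off the defining inequalities of the Lipschitz and semi-concavity conditions, so the proof reduces to four applications of the preceding lemma.
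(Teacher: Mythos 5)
Your proposal is correct and follows essentially the same route as the paper: both proofs reduce all four statements to Lemma \ref{lm:implicit0} applied to translation-invariant functionals that are maxima (or minima, or sums) of affine evaluation maps. The only cosmetic difference is that for the Lipschitz and semi-concavity parts you use a family of functionals $A_y$ indexed by the displacement $y$, whereas the paper folds the maximum over $y$ into a single functional; the two are interchangeable.
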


\begin{proof}
We use Lemma \ref{lm:implicit0} for the proof. The key point is the choice of the translation invariant function $A$ in Lemma \ref{lm:implicit0}. Keep in mind that the maximum (resp.\@ minimum) of a family of linear functions is l.s.c. and convex (resp.\@ u.s.c. and concave) in finite dimensions.
    
For point (1), it suffices to take $A (X) = \min_{x\in S} \{X(x)\}$ and $A (X) = \max_{x\in S} \{X(x)\}$. For point (2), we take $A(X) = \sum_{x\in S}X(x)$. 
For point (3), we take 
\begin{equation*}
A(X) = \max_{x\in S} \max_{y\in S, y\neq 0}  \frac{X(x+y)-X(x)}{\|y\|}.
\end{equation*}
Finally, for point (4), we take 
\begin{equation*}
A(X) = \max_{x\in S}\max_{y\in S, y\neq 0} \frac{X(x+y) + X(x-y) -2 X(x)}{\|y\|^2}.
\end{equation*}
The conclusion follows.
\end{proof}

\begin{lem}[Semi-concavity of the value function] \label{lm:semi-concave}
Let Assumptions \ref{ass:continuous}, \ref{ass:semi-concave-l}(1)  and condition \eqref{cond:CFL} hold true. Then for any $m\in\mathcal{P}(\tilde{\mathcal{T}},S)$, the unction \textnormal{$u = \hjbt(m)$} is $3L^c$-semi-concave with respect to $x$.
\end{lem}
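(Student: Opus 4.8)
The plan is to argue by backward induction on the time index $t$, tracking the semi-concavity constant $L_t$ of $u(t,\cdot)$, where $u=\hjbt(m)$, and to establish the recursion $L_t \le L_{t+1} + 2\Delta t\, L^c$ together with the base value $L_T = L^c$. Since $\Delta t\, T = 1$, this telescopes to $L_t \le L^c + 2\Delta t\, L^c (T-t) \le 3L^c$ for every $t$, which is the assertion. The base case is immediate: $u(T,\cdot)=g$ is $L^c$-semi-concave by Lemma~\ref{lm:semi-concave1}. For the inductive step, assume $u(t+1,\cdot)$ is $L_{t+1}$-semi-concave. The intermediate function solves $(Id-\theta\sigma\Delta t\,\Delta_h)\,u(t+1/2)=u(t+1)$, so by Lemma~\ref{lm:implicit}(4) it is again $L_{t+1}$-semi-concave; I write $w:=u(t+1/2,\cdot)$.

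The core of the argument is the explicit step. Using the definition \eqref{eq:discrete_hamiltonian} of the discrete Hamiltonian to rewrite $-\Delta t\,H[\nabla_h w](t,x) = \Delta t\inf_{\|v\|\le M}\{\langle \nabla_h w(x),v\rangle + \ell(t,x,v)\}$, the second equation defining $\hjbt$ puts $u(t,\cdot)$ in Bellman form,
\[
u(t,x)=\inf_{\|v\|\le M}\Phi_v(x),\qquad \Phi_v(x):=\Psi_v(x)+\Delta t\,\ell(t,x,v)+\Delta t\,f(t,x,m(t)),
\]
where $\Psi_v(x):=w(x)+(1-\theta)\sigma\Delta t\,\Delta_h w(x)+\Delta t\,\langle\nabla_h w(x),v\rangle$. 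Expanding $\Delta_h$ and $\nabla_h$ on the stencil and setting $\beta:=(1-\theta)\sigma\Delta t/h^2$, the function $\Psi_v(x)$ becomes $(1-2d\beta)\,w(x)+\sum_{i=1}^d[(\beta+\tfrac{\Delta t\, v_i}{2h})\,w(x+he_i)+(\beta-\tfrac{\Delta t\, v_i}{2h})\,w(x-he_i)]$. The key observation is that under \eqref{cond:CFL} all these stencil weights are nonnegative and sum to $1$: the first inequality of \eqref{cond:CFL} gives $2d\beta\le 1$, hence $1-2d\beta\ge 0$, while the second one, together with $|v_i|\le\|v\|\le M$, gives $\beta\ge \Delta t\,|v_i|/(2h)$. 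Thus $\Psi_v$ is a convex combination of translates of $w$; since a translate of an $L_{t+1}$-semi-concave function is $L_{t+1}$-semi-concave (immediate from Definition~\ref{def:semi-concave2}, the torus being translation invariant), iterating Lemma~\ref{lm:semi-concavity}(1) shows $\Psi_v$ is $L_{t+1}$-semi-concave.

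It then remains to add the two cost terms. Each of $\Delta t\,\ell(t,\cdot,v)$ (for $\|v\|\le M$) and $\Delta t\,f(t,\cdot,m(t))$ is $\Delta t\,L^c$-semi-concave by Lemma~\ref{lm:semi-concave1}, so Lemma~\ref{lm:semi-concavity}(1) makes every $\Phi_v$ $(L_{t+1}+2\Delta t\,L^c)$-semi-concave. Finally, $u(t,\cdot)$ is the finite-valued infimum of the family $\{\Phi_v\}_{\|v\|\le M}$, so Lemma~\ref{lm:semi-concavity}(2) transfers the constant to $u(t,\cdot)$, yielding $L_t\le L_{t+1}+2\Delta t\,L^c$ and closing the induction.

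The main obstacle is precisely the sign-indefinite first-order term $\Delta t\,\langle\nabla_h w,v\rangle$ in the explicit step: trying to estimate the Hamiltonian contribution directly from the semi-concavity of $w$ fails, because this term produces third-order stencil differences of $w$ (differences of the second difference $D_y^2 w$), which semi-concavity does not control. The resolution is to keep the first-order term fused with the diffusion inside the single quantity $\Psi_v$ and to exploit that \emph{both} inequalities of \eqref{cond:CFL} are exactly what is needed to keep the full stencil a genuine convex combination. I expect the remaining work to be routine bookkeeping: checking that the stencil weights sum to one, that translation preserves the semi-concavity constant on $S$, that the infimum is finite so that Lemma~\ref{lm:semi-concavity}(2) applies, and that the constant telescopes to $3L^c$ via $\Delta t\,T=1$.
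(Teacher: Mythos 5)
Your proof is correct and follows essentially the same route as the paper: backward induction, using Lemma \ref{lm:implicit}(4) for the implicit half-step, rewriting the explicit step as an infimum over controls of a nonnegative-weight stencil combination of $u(t+1/2,\cdot)$ plus the cost terms, and invoking Lemma \ref{lm:semi-concavity} to propagate the constant, with the recursion $L_t \le L_{t+1} + 2\Delta t L^c$ telescoping to $3L^c$. The paper simply states the induction hypothesis with the explicit constant $(2(T-t)\Delta t + 1)L^c$ rather than your recursive form, which is the same bookkeeping.
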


\begin{proof}
Observe that $\hjbt(m)$ is equivalent to the formulation below:
\begin{equation}\label{eq:dp_u}
\begin{cases}
\ \left(Id - \theta \sigma\Delta t \Delta_h\right) u(t+1/2)  =  u(t + 1) ; \\[0.5em]
\ u(t,x) =   \Delta t \inf_{\|\omega\|\leq M} \Big\{ f(t,x,m(t)) + \ell(t,x,\omega) + \big\langle  \omega, \nabla_h u(t+1/2,x)  \big\rangle\Big\} \\[0.5em]
\qquad \qquad +\big(Id+ (1-\theta )\sigma \Delta t \Delta_h  \big)u(t+1/2)(x), \qquad \forall x \in S; \\[0.5em]
\ u(T,x) =g(x), \qquad \forall x \in S.
\end{cases}
\end{equation}
We prove the lemma by induction. For $t=T$, by the terminal condition, it is obvious that
\begin{equation*}
      u\left(T,x\right) \geq \frac{ u(T,x+y)+ u(T,x-y)}{2} - L^c\|y\|^2, \qquad \forall x,\,y \in S.
\end{equation*}
Suppose that for some $t\in\mathcal{T}$, we have
\begin{equation*}
    u\left(t+1,x\right) \geq \frac{ u(t+1,x+y)+ u(t+1,x-y)}{2} - (2(T-1-t)\Delta t +1)L^c\|y\|^2, \qquad \forall x,\,y \in S.
\end{equation*}
Since $u(t+1)$ and $u(t+1/2) $ satisfy the implicit scheme \eqref{eq:implicit}, by Lemma \ref{lm:implicit}(4), we have
\begin{equation*}
    u\left(t+1/2,x\right) \geq \frac{ u(t+1/2,x+y)+ u(t+1/2,x-y)}{2} - (2(T-1-t)\Delta t +1)L^c\|y\|^2, \qquad \forall x,\,y \in S.
\end{equation*}

Let $r' = (1 - \theta) \sigma \Delta t/h^2$. The second equation in \eqref{eq:dp_u} can be written as follows:
\begin{equation*}
    u(t,x) = \inf_{\|\omega\|\leq M} l_{\omega}(t,x),
\end{equation*}
where
\begin{equation*}
\begin{split}
    l_{\omega}(t,x) \coloneqq & (1-2dr')u(t+1/2,x) +\sum_{i=1}^d \Big( r' +\frac{\omega_i}{2h}\Big) u(t+1/2,x+he_i)\\
    & +\sum_{i=1}^d \Big( r' -\frac{\omega_i}{2h}\Big) u(t+1/2,x-he_i) +  \Delta t (f(t,x,m(t)) + \ell(t,x,\omega)).
\end{split}
\end{equation*}
By condition \eqref{cond:CFL}, the coefficients of the above equation are positive for any $\|w\|\leq M$. Then by Lemma \ref{lm:semi-concavity}(1), the semi-concavity of $u(t+1/2,\cdot)$, $f$, and $\ell$,  we have that $l_{\omega}$ is $(2(T-t)\Delta t + 1)L^c$-semi-concave. Since $u(t,x)>-\infty$ for any $x\in S$, we deduce from Lemma \ref{lm:semi-concavity}(2) that
\begin{equation*}
     u(t,x) \geq \frac{u(t,x+y) + u(t,x-y)}{2} - (2(T-t)\Delta t +1)L^c\|y\|^2, \qquad \forall x,\,y \in S.
\end{equation*}
The conclusion follows by induction.
\end{proof}

We have the following regularity result for the discrete Hamiltonian $H$ (defined in \eqref{eq:discrete_hamiltonian}).

\begin{lem}\label{lm:lip}
Let Assumptions \ref{ass:continuous} and \ref{ass:semi-concave-l}(2) hold true. Then, for any $(t,x)\in \mathcal{T}\times S$ and $\|p\|\leq \sqrt{d}(L^c_{\ell}+L^c_{f}+L^{c}_{g})$,
\begin{equation}\label{eq:H_separate}
  H(t,x,p) = \sum_{i=1}^d H^c_i(t\Delta t,x,p_i), \qquad H_p(t,x,p) = \left( \frac{\partial H^c_i}{\partial p_i}(t\Delta t,x,p_i)\right)_{i=1}^d,
\end{equation}
where $p_i$ is the $i$-th coordinate of $p$ and
\begin{equation}\label{eq:Hc_i}
   H^c_i(t,x,p_i) = \sup_{v_i} - v_i p_i - \ell^c_i(t,x,v_i).
\end{equation}
Moreover, $ \frac{\partial H^c_i}{\partial p_i}(t\Delta t,x,p_i) $ is $L^c_{\ell}/{\alpha^c}$-Lipschitz with respect to $x$.
\end{lem}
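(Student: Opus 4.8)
The plan is to reduce the statement, via the separability Assumption \ref{ass:semi-concave-l}(2), to a family of one-dimensional Legendre transforms, and to use the strong convexity of $\ell^c$ together with the very definition of $M$ to show that the truncation built into $\ell = \hat{\ell}^c$ is inactive on the relevant range of $p$. First I would record that, combining Assumption \ref{ass:semi-concave-l}(2) with the strong convexity of Assumption \ref{ass:continuous}(2), each $\ell^c_i(t\Delta t,x,\cdot)$ is $\alpha^c$-strongly convex: the Hessian of $\ell^c$ is diagonal, with $i$-th entry $\partial^2_{v_i}\ell^c_i\geq \alpha^c$. Hence each scalar problem in \eqref{eq:Hc_i} has a unique maximizer $v_i^\ast(t\Delta t,x,p_i)$, characterized by $\frac{\partial \ell^c_i}{\partial v_i}(t\Delta t,x,v_i^\ast)=-p_i$; equivalently the unconstrained problem $\sup_{v\in\R^d}-\langle p,v\rangle-\ell^c(t\Delta t,x,v)$ has the unique maximizer $v^\ast=(v_i^\ast)_{i=1}^d$, solving $\ell^c_v(t\Delta t,x,v^\ast)=-p$.

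The \emph{key estimate} is a bound on $\|v^\ast\|$. Testing the monotonicity inequality coming from strong convexity against $v^\ast$ and $0$ gives $\alpha^c\|v^\ast\|^2\leq \langle \ell^c_v(t\Delta t,x,v^\ast)-\ell^c_v(t\Delta t,x,0),\,v^\ast\rangle=\langle -p-\ell^c_v(t\Delta t,x,0),\,v^\ast\rangle$, whence $\|v^\ast\|\leq \frac{1}{\alpha^c}\big(\|p\|+\|\ell^c_v(t\Delta t,x,0)\|\big)$. For $\|p\|\leq \sqrt{d}(L^c_\ell+L^c_f+L^c_g)$ this is at most $M$ by the definition \eqref{eq:cons_M}; moreover, thanks to the factor $2$ in \eqref{eq:cons_M}, the same bound stays strictly below $M$ on an open neighborhood of the closed ball $\{\|p\|\leq \sqrt{d}(L^c_\ell+L^c_f+L^c_g)\}$. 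Therefore the feasible maximizer of the $M$-constrained problem defining $H$ coincides with the unconstrained one, the truncation is inactive, and $H(t,x,p)=\sup_{v\in\R^d}-\langle p,v\rangle-\ell^c(t\Delta t,x,v)$. By separability this supremum splits coordinatewise into $\sum_{i=1}^d H^c_i(t\Delta t,x,p_i)$, giving the first identity in \eqref{eq:H_separate}. Since $H$ coincides with this smooth separable sum on a neighborhood of $p$, it is differentiable there, and differentiating term by term (each summand depending only on $p_i$), together with the envelope/Danskin identity $\frac{\partial H^c_i}{\partial p_i}=-v_i^\ast$ — valid because $\ell^c_i$ is strongly convex — yields the second identity in \eqref{eq:H_separate}.

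For the Lipschitz claim I would compare the maximizers at two space points. Writing $v_1=v_i^\ast(t\Delta t,x_1,p_i)$ and $v_2=v_i^\ast(t\Delta t,x_2,p_i)$, both satisfy the first-order condition, so $\frac{\partial\ell^c_i}{\partial v_i}(t\Delta t,x_1,v_1)=\frac{\partial\ell^c_i}{\partial v_i}(t\Delta t,x_2,v_2)$. Applying the $\alpha^c$-strong convexity of $\ell^c_i(t\Delta t,x_1,\cdot)$ and then substituting this equality gives $\alpha^c|v_1-v_2|^2\leq \big(\tfrac{\partial\ell^c_i}{\partial v_i}(t\Delta t,x_2,v_2)-\tfrac{\partial\ell^c_i}{\partial v_i}(t\Delta t,x_1,v_2)\big)(v_1-v_2)$. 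Bounding the right-hand side through the $L^c_\ell$-Lipschitz continuity of $\ell^c_v(t\Delta t,\cdot,v)$ in $x$ from Assumption \ref{ass:continuous}(1) — whose $i$-th component is exactly $\frac{\partial\ell^c_i}{\partial v_i}$ by separability — yields $\alpha^c|v_1-v_2|\leq L^c_\ell\|x_1-x_2\|$. Since $\frac{\partial H^c_i}{\partial p_i}=-v_i^\ast$, this is precisely the asserted $L^c_\ell/\alpha^c$-Lipschitz continuity in $x$.

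I expect the main obstacle to be the bookkeeping around the truncation: ensuring that $\|v^\ast\|\leq M$ holds with enough margin — which is exactly what the factor $2$ in $M$ secures — so that $H$ agrees with the untruncated separable sum not merely pointwise but on a full neighborhood of $p$, thereby legitimizing the term-by-term differentiation that produces $H_p$. The strong-convexity estimates used to bound $v^\ast$ and to establish Lipschitz continuity are otherwise routine.
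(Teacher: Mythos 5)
Your proof is correct and takes essentially the same route as the paper, whose own proof simply defers to \cite[Lem.\@ 5.1]{BLP2022} for the separable form of $H$ and to the argument of \cite[Lem.\@ 2.7]{BLP2022} for the Lipschitz estimate: the content of those citations is exactly what you supply, namely the bound $\|v^\ast\|\leq\frac{1}{\alpha^c}\big(\|p\|+\|\ell^c_v(t\Delta t,x,0)\|\big)\leq M$ showing the truncation in $\hat\ell^c$ is inactive, the coordinatewise splitting of the Legendre transform, and the strong-convexity comparison of the two maximizers for the $L^c_\ell/\alpha^c$-Lipschitz bound in $x$. Two minor remarks: the diagonal-Hessian aside is unneeded (and $\ell^c$ is only assumed $\mathcal{C}^1$ in $v$) since the $\alpha^c$-strong convexity of each $\ell^c_i$ follows by testing the strong-convexity inequality on vectors differing in a single coordinate, and the neighborhood/margin discussion can be dispensed with, because the conjugate of the truncated (still strongly convex) integrand is differentiable everywhere with gradient equal to minus its unique maximizer, which coincides with $-v^\ast$ as soon as $\|v^\ast\|\leq M$.
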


\begin{proof}
Equality \eqref{eq:H_separate} is from \cite[Lemma 5.1]{BLP2022} and the separable form of $\ell^c$. The Lipschitz continuity of $ \frac{\partial H^c_i}{\partial p_i}$ is proved with the same argument as the one the proof of \cite[Lemma 2.7]{BLP2022}.
\end{proof}

\begin{lem}[Lipschitz continuity of the value function]\label{lm:lip1}
Let Assumption \ref{ass:continuous} and condition \eqref{cond:CFL} hold true. For any $m\in\mathcal{P}(\tilde{\mathcal{T}},S)$, let \textnormal{$u=\hjbt(m)$} and \textnormal{$v = \vt(u) $}. Then $u$ is $(L_{\ell}^c+L_{f}^c+L_g^c)$-Lipschitz with respect to $x$ and $\|v\|_{\infty,\infty}\leq M$.
\end{lem}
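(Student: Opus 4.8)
The plan is to prove the two assertions in turn, first the Lipschitz estimate by a backward induction in time, then the pointwise control bound $\|v\|_{\infty,\infty}\le M$ as a consequence of it. Write $L \coloneqq L_\ell^c + L_f^c + L_g^c$. For the Lipschitz part I would \emph{not} try to show directly that every time slice is $L$-Lipschitz, because the natural per-step estimate loses a little at each step; instead I would carry the sharper induction hypothesis
\[
\mathrm{Lip}_x\big(u(t,\cdot)\big) \le L_g^c + (T-t)\Delta t\,(L_\ell^c + L_f^c), \qquad t \in \tilde{\mathcal{T}},
\]
whose right-hand side is bounded by $L_g^c + L_\ell^c + L_f^c = L$ since $(T-t)\Delta t \le T\Delta t = 1$. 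The base case $t=T$ is immediate because $u(T,\cdot)=g$ is $L_g^c$-Lipschitz by Assumption \ref{ass:continuous}.

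For the inductive step I would use the dynamic-programming representation \eqref{eq:dp_u}. Since $u(t+1/2)$ and $u(t+1)$ obey the implicit heat step $(\mathrm{Id}-\theta\sigma\Delta t\Delta_h)u(t+1/2)=u(t+1)$, Lemma \ref{lm:implicit}(3) gives $\mathrm{Lip}_x(u(t+1/2,\cdot))\le \mathrm{Lip}_x(u(t+1,\cdot))$. Pulling the $\omega$-independent coupling term out of the infimum, \eqref{eq:dp_u} reads
\[
u(t,x) = \Delta t\, f(t,x,m(t)) + \inf_{\|\omega\|\le M}\Big\{ A_\omega(x) + \Delta t\,\ell(t,x,\omega)\Big\},
\]
where $A_\omega(x)$ combines $u(t+1/2,\cdot)$ at $x$ and $x\pm he_i$ with weights $1-2dr'$ and $r'\pm \Delta t\,\omega_i/(2h)$, with $r'=(1-\theta)\sigma\Delta t/h^2$. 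The crucial point is that condition \eqref{cond:CFL} makes these weights nonnegative and summing to one for every $\|\omega\|\le M$, so $A_\omega$ is a convex combination of translates of $u(t+1/2,\cdot)$; as translations preserve distances on $S$, each $A_\omega(\cdot)$ is $\mathrm{Lip}_x(u(t+1/2,\cdot))$-Lipschitz. Using that $\ell(t,\cdot,\omega)$ is $L_\ell^c$-Lipschitz and $f(t,\cdot,m(t))$ is $L_f^c$-Lipschitz (both inherited from Assumption \ref{ass:continuous} through the cell-averaging in \eqref{eq:grid}), and that an infimum of a family of $\kappa$-Lipschitz functions is $\kappa$-Lipschitz, I obtain
\[
\mathrm{Lip}_x\big(u(t,\cdot)\big) \le \mathrm{Lip}_x\big(u(t+1/2,\cdot)\big) + \Delta t\,(L_\ell^c + L_f^c) \le \mathrm{Lip}_x\big(u(t+1,\cdot)\big) + \Delta t\,(L_\ell^c+L_f^c),
\]
which closes the induction and yields $\mathrm{Lip}_x(u(t,\cdot))\le L$ for all $t$.

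For the control bound, fix $(t,x)$ and set $p=\nabla_h u(t+1/2,x)$. The Lipschitz estimate gives $\|p\|\le \sqrt d\,\mathrm{Lip}_x(u(t+1/2,\cdot))\le \sqrt d\,L$. By definition $v(t,x)$ is the minimizer over $\|\omega\|\le M$ of $\omega\mapsto \ell(t,x,\omega)+\langle\omega,p\rangle$, and since $\ell$ agrees with $\ell^c$ on $\{\|\omega\|\le M\}$, it suffices to show that the \emph{unconstrained} minimizer $\bar\omega$ of $\omega\mapsto \ell^c(t,x,\omega)+\langle\omega,p\rangle$ already satisfies $\|\bar\omega\|\le M$, so that the truncation is inactive and $v(t,x)=\bar\omega$. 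Its first-order condition is $\ell_v^c(t,x,\bar\omega)=-p$, and testing the strong-convexity inequality of Assumption \ref{ass:continuous} between $\bar\omega$ and $0$ gives $\alpha^c\|\bar\omega\|^2 \le \langle \ell_v^c(t,x,\bar\omega)-\ell_v^c(t,x,0),\bar\omega\rangle = \langle -p-\ell_v^c(t,x,0),\bar\omega\rangle$, whence
\[
\|\bar\omega\| \le \frac{1}{\alpha^c}\big(\|p\|+\|\ell_v^c(t,x,0)\|\big) \le \frac{1}{\alpha^c}\Big(\sqrt d\,L + \max_{(t,x)\in Q}\|\ell_v^c(t,x,0)\|\Big) \le M,
\]
the last inequality being exactly the definition \eqref{eq:cons_M} of $M$ (with slack to spare). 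Hence $\|v(t,x)\|\le M$ for all $(t,x)$, i.e.\ $\|v\|_{\infty,\infty}\le M$.

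The main obstacle is the sharp bookkeeping in the induction: a crude slice-by-slice argument would only give a constant growing like $L_g^c + (\text{number of steps})\cdot\Delta t(L_\ell^c+L_f^c)$, so the point is that the accumulated increment $(T-t)\Delta t(L_\ell^c+L_f^c)$ is at most $L_\ell^c+L_f^c$ and telescopes to precisely the claimed constant. The other delicate ingredient is the nonnegativity of the stencil weights $r'\pm\Delta t\,\omega_i/(2h)$ over the whole admissible range $\|\omega\|\le M$, which is exactly what the two inequalities in \eqref{cond:CFL} ensure; without it $A_\omega$ would fail to be an average and the Lipschitz constant would not be preserved. Note that this route uses only Assumption \ref{ass:continuous} and \eqref{cond:CFL}, so separability (Assumption \ref{ass:semi-concave-l}) is not needed here.
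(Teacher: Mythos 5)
The paper does not prove this lemma in-text: it simply cites \cite[Lemma 4.3]{BLP2022}. Your argument is a correct, self-contained proof, and it follows exactly the template the paper itself uses for the neighbouring results on the value function: the implicit half-step is handled by Lemma \ref{lm:implicit}(3), the explicit half-step is rewritten as an infimum over $\|\omega\|\le M$ of convex combinations of translates of $u(t+1/2,\cdot)$ plus the running and coupling costs (the same decomposition into $l_\omega$ used in the proof of Lemma \ref{lm:semi-concave}, with nonnegativity of the stencil weights coming from both inequalities in \eqref{cond:CFL}), and the control bound follows from the strong-convexity/first-order-condition estimate together with the definition \eqref{eq:cons_M} of $M$ (the same computation as in the paper's derivation of $M$ and of the bound $D=M$). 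All the individual steps check out: the weights sum to one, the infimum of $L$-Lipschitz functions is $L$-Lipschitz, $\|\nabla_h u(t+1/2,x)\|\le\sqrt d\,\mathrm{Lip}_x(u(t+1/2,\cdot))$, and the unconstrained minimizer satisfies $\|\bar\omega\|\le\frac{1}{\alpha^c}(\sqrt d L+\max\|\ell_v^c(\cdot,\cdot,0)\|)\le M$, so the truncation in $\hat\ell^c$ is inactive and $v=-H_p$ coincides with $\bar\omega$. Two cosmetic remarks: the Lipschitz constant of $\ell(t,\cdot,\omega)$ is inherited by direct evaluation in \eqref{eq:grid}, not by cell-averaging (only $f$ is averaged); and the ``sharp bookkeeping'' you emphasize is not actually needed, since the crude slice-by-slice bound accumulates exactly $T\cdot\Delta t\,(L_\ell^c+L_f^c)=L_\ell^c+L_f^c$ over the $T$ steps, i.e.\ it already telescopes to the claimed constant because $T\Delta t=1$. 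Neither point affects correctness.
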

\begin{proof}
See \cite[Lemma 4.3]{BLP2022}.
\end{proof}

\begin{lem}[$\ell^{\infty}$-stability]\label{lm:maximum}
Let Assumptions \ref{ass:continuous}, \ref{ass:semi-concave-l} and condition \eqref{cond:CFL} hold true. Then,
\textnormal{
\begin{equation*}
   \sup_{\mu\in \mathcal{P}(\tilde{\mathcal{T}},S)} \|\fpt \circ \vt \circ \hjbt (\mu)\|_{\infty,\infty} \leq \exp\left(\frac{d(L_{\ell}^c + 6 L^c)}{\alpha^c }\right) \|m_0\|_{\infty}.
\end{equation*}}
\end{lem}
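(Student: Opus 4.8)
The plan is to prove the estimate by a discrete maximum principle for the Fokker-Planck operator, mimicking the continuous argument of \cite[Lem.\@ 5.3]{cardaliaguet2018}, with the continuous semi-concavity of the value function replaced by the discrete semi-concavity established in Lemma \ref{lm:semi-concave}. Fix $\mu \in \mathcal{P}(\tilde{\mathcal{T}},S)$ and set $u = \hjbt(\mu)$, $v = \vt(u)$, and $m = \fpt(v)$. By Lemma \ref{lm:lip1} we have $\|v\|_{\infty,\infty} \leq M$, so $m \in \mathcal{P}(\tilde{\mathcal{T}},S)$ (as in the proof of Theorem \ref{thm1}), in particular $m(t,\cdot)\geq 0$; and by Lemma \ref{lm:semi-concave} together with the preservation of semi-concavity under the implicit heat step (Lemma \ref{lm:implicit}(4)), each function $u(t+1/2,\cdot)$ is $3L^c$-semi-concave. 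The target is a one-step estimate $\max_{x} m(t+1,x) \leq (1 + K\Delta t)\max_{x} m(t,x)$ with $K = d(L_\ell^c + 6L^c)/\alpha^c$; iterating over $t \in \mathcal{T}$ and using $T\Delta t = 1$ gives $(1+K\Delta t)^T \leq e^{K}$, which yields the claimed bound since $\|m_0\|_\infty = \max_x m(0,x)$ and $K$ is independent of $\mu$, $\Delta t$, and $h$.

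First I would rewrite the Fokker-Planck scheme as $(Id - \theta\sigma\Delta t\,\Delta_h) m(t+1) = R(t)$, where
\[
R(t,x) = m(t,x) + (1-\theta)\sigma\Delta t\,\Delta_h m(t,x) - \Delta t\,\text{div}_h(vm)(t,x).
\]
Expanding the discrete operators, $R(t,x)$ is a linear combination of $m(t,x)$ and $m(t,x\pm he_i)$. Writing $r = (1-\theta)\sigma\Delta t/h^2$, the coefficient of $m(t,x)$ is $1 - 2dr$ and the coefficients of $m(t,x\pm he_i)$ are $r \mp \Delta t\,v_i(t,x\pm he_i)/(2h)$. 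Condition \eqref{cond:CFL} and the bound $\|v\|_{\infty,\infty}\leq M$ ensure that all these coefficients are nonnegative, and a direct computation shows that they sum to $1 - \Delta t\,\text{div}_h v(t,x)$. Since $m(t,\cdot) \geq 0$, bounding each neighboring value by $\max_x m(t,x)$ yields $R(t,x) \leq (1 - \Delta t\,\text{div}_h v(t,x))\max_x m(t,x)$. Combining this with the discrete maximum principle (Lemma \ref{lm:implicit}(1) applied to the implicit heat step) gives $\max_x m(t+1,x) \leq \max_x R(t,x)$, so it remains only to bound $-\text{div}_h v$ from above.

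The heart of the proof, and the main obstacle, is the lower bound $\text{div}_h v(t,x) \geq -K$. Using the separable structure (Assumption \ref{ass:semi-concave-l}(2)) and Lemma \ref{lm:lip}, I would write $v_i(t,x) = -\partial_{p_i}H^c_i(t\Delta t, x, q_i(x))$, where $q_i(x) = (\nabla_h u(t+1/2,x))_i$, so that $\text{div}_h v$ is a sum over $i$ of centered differences of $x \mapsto \partial_{p_i}H^c_i(t\Delta t, x, q_i(x))$. For each $i$ I would split this difference into a contribution from the spatial argument, controlled by the $L_\ell^c/\alpha^c$-Lipschitz continuity of $\partial_{p_i}H^c_i$ in $x$ (Lemma \ref{lm:lip}) and giving the term $L_\ell^c/\alpha^c$, and a contribution from the momentum argument. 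For the latter, the identity
\[
q_i(x+he_i) - q_i(x-he_i) = \frac{u(t+1/2,x+2he_i) - 2u(t+1/2,x) + u(t+1/2,x-2he_i)}{2h},
\]
together with the $3L^c$-semi-concavity of $u(t+1/2,\cdot)$ applied with increment $y = 2he_i$, bounds this quantity from above by $12L^c h$. Because $\partial_{p_i}H^c_i$ is nondecreasing in $p_i$ (convexity of $H^c_i$) and $1/\alpha^c$-Lipschitz in $p_i$ (strong convexity of $\ell^c_i$), this one-sided bound is exactly what is needed: when the momentum difference is positive the corresponding term is at most $12L^c h/\alpha^c$, and when it is negative that term is nonpositive. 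Dividing by $2h$ and summing over $i$ gives $\text{div}_h v(t,x) \geq -d(L_\ell^c + 6L^c)/\alpha^c = -K$, which closes the one-step estimate and, after iteration, completes the proof. The delicate point is precisely that semi-concavity delivers only an upper bound on the second difference of $u$; it is the monotonicity of $\partial_{p_i}H^c_i$ that renders this one-sided information sufficient for the divergence lower bound.
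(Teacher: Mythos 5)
Your proposal is correct and follows essentially the same route as the paper's proof: the same decomposition of the Fokker-Planck step into an explicit convex combination (with coefficients summing to $1-\Delta t\,\mathrm{div}_h v$, nonnegative under \eqref{cond:CFL}) followed by the implicit heat step handled via the discrete maximum principle of Lemma \ref{lm:implicit}(1), and the same lower bound on $\mathrm{div}_h v$ obtained from the separable Hamiltonian, the Lipschitz continuity and monotonicity of $\partial H^c_i/\partial p_i$, and the $3L^c$-semi-concavity of $u(t+1/2,\cdot)$ applied with $y=2he_i$. All constants match, including the final factor $\exp\bigl(d(L_\ell^c+6L^c)/\alpha^c\bigr)$.
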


\begin{proof}
Let $\mu\in \mathcal{P}(\tilde{\mathcal{T}},S)$, let $u = \hjbt (\mu)$, let $v = \vt(u)$, and let $m = \fpt(v)$. 
Observe that $m = \fpt(v)$ is equivalent to the formulation below:
\begin{equation}\label{eq:fpt}
 \begin{cases}
 \         m(t+1/2) = \big(Id+ (1-\theta )\sigma \Delta t \Delta_h  \big) m(t)   - \Delta t \text{div}_h \big(v (t)m(t) \big) ; \\[0.4em]
 \          \left(Id - \theta \sigma\Delta t \Delta_h\right) m(t+1) = m(t+1/2); \\[0.4em]
  \         m(0) = m_0.
    \end{cases}
\end{equation}
Let us first compare $\|m(t+1/2,\cdot)\|_{\infty}$ and $\|m(t+1,\cdot)\|_{\infty}$. Since $m(t+1)$ and $m(t+1/2)$ satisfy the implicit scheme \eqref{eq:implicit}, by Lemma \ref{lm:implicit}(1), we have 
\begin{equation}\label{eq:maximum1}
\|m(t+1,\cdot)\|_{\infty}\leq\|m(t+1/2,\cdot)\|_{\infty}.
\end{equation}
Then, we compare $\|m(t,\cdot)\|_{\infty}$ and $\|m(t+1/2,\cdot)\|_{\infty}$. Let $r' = (1 - \theta) \sigma \Delta t/h^2$.  The first equation in \eqref{eq:fpt} shows that for any $(t,x) \in \mathcal{T}\times S$,
\begin{equation*}
\begin{split}
     m(t+1/2,x) = {}&(1-2d\gamma')m(t,x) + \sum_{i=1}^d \left(\gamma' - \Delta t\frac{v_i(t,x+he_i)}{2h}\right) m(t,x+he_i) \\
      & + \sum_{i=1}^d\left(\gamma' + \Delta t \frac{v_i(t,x-he_i)}{2h}\right) m(t,x - he_i).
\end{split}
\end{equation*}
Condition \eqref{cond:CFL} implies that all the coefficients in the above equation are positive. Therefore, for any $(t,x) \in \mathcal{T}\times S$,
\begin{equation}\label{eq:maximum2}
    m(t+1/2,x) \leq \left(1 - \Delta t \sum_{i=1}^d \frac{ v_i(t,x+he_i) - v_i(t,x-he_i)}{2h }\right)\|m(t,\cdot)\|_{\infty} = \left(1 - \Delta t \text{div}_h v\right)\|m(t,\cdot)\|_{\infty}.
\end{equation}
By Lemma \ref{lm:lip1} and Lemma \ref{lm:implicit}(3), we have $\|\nabla_h u(t+1/2,x)\|\leq \sqrt{d}(L^c_{\ell}+L^c_{f}+L^{c}_{g}) $ for any $(t,x) \in \mathcal{T}\times S$. Then, formula \eqref{eq:H_separate} implies that
\begin{equation}\label{eq:div_hv}
\begin{split}
    - \text{div}_h v(t,x)
     = {}&\frac{1}{2h} \sum_{i=1}^d \frac{\partial H^c_{i}}{\partial p_i} (t\Delta t,x+he_i, (\nabla_h u (t+1/2,x+he_i))_i) \\
   &  \qquad - \frac{\partial H^c_{i}}{\partial p_i} (t\Delta t,x-he_i, (\nabla_h u (t+1/2,x-he_i))_i) \\
    \leq {}& \frac{dL^c_{\ell}}{\alpha^c} + \frac{1}{2h} \sum_{i=1}^d \frac{\partial H^c_{i}}{\partial p_i} (t\Delta t,x, (\nabla_h u (t+1/2,x+he_i))_i) \\
 & \qquad  - \frac{\partial H^c_{i}}{\partial p_i} (t\Delta t,x, (\nabla_h u (t+1/2,x-he_i))_i),
\end{split}
\end{equation} 
where the last inequality follows from the Lipschitz-continuity of $\frac{\partial H^c_{i}}{\partial p_i} $ with respect to $x$ (established in Lemma \ref{lm:lip}). Since $H^c_i$ is convex on $p_i$, the derivative $\frac{\partial H^c_{i}}{\partial p_i} $ is non-decreasing with respect to $p_i$. Furthermore, we know that $\frac{\partial H^c_{i}}{\partial p_i} $ is $1/\alpha^c$-Lipschitz on $p_i$ by the strong convexity of $\ell^c$. It follows that for any $(t,x)\in Q$ and $p_i^1, p_i^2 \in \mathbb{R}$,
\begin{equation*}
    \frac{\partial H^c_{i}}{\partial p_i} (t,x,p_i^1) -\frac{\partial H^c_{i}}{\partial p_i} (t,x,p_i^2) \leq \max\left\{0, \frac{1}{\alpha^c} (p_i^1 - p_i^2)\right\}.
\end{equation*}
Applying the above inequality to \eqref{eq:div_hv}, we have 
    \begin{equation*}
\begin{split}
&   - \text{div}_h v(t,x)\leq \frac{dL^c_{\ell}}{\alpha^c} + \frac{1}{2\alpha^c h}\sum_{i=1}^d \max\left\{0, (\nabla_h u (t+1/2,x+he_i))_i - (\nabla_h u (t+1/2,x-he_i))_i\right\}\\
& \qquad   = \frac{dL^c_{\ell}}{\alpha^c} + \frac{1}{\alpha^c}\sum_{i=1}^d \max\left\{0,  \frac{u(t+1/2,x+2he_i) + u(t+1/2,x-2he_i) -2 u(t+1/2,x) }{4h^2 }\right\}.
\end{split}
\end{equation*}
By Lemma \ref{lm:semi-concave}, for any $(t,x,y)\in \mathcal{T}\times S^2$ and $y\neq 0$, we have
\begin{equation*}
   \frac{u(t+1/2,x+y) + u(t+1/2,x-y) -2 u(t+1/2,x) }{\|y\|^2 } \leq 6 L^c.
\end{equation*}
Taking $y=2h e_i$, it follows that
\begin{equation}\label{eq:maximum3}
 - \text{div}_h v(t,x) \leq  \frac{d(L_{\ell}^c + 6 L^c)}{\alpha^c }.
\end{equation}
Combining \eqref{eq:maximum1}, \eqref{eq:maximum2}, and \eqref{eq:maximum3}, we have \begin{equation*}
    \|m(t+1,\cdot)\|_{\infty} \leq \left(1 + \Delta t  \frac{d(L_{\ell}^c + 6 L^c)}{\alpha^c }\right) \|m(t,\cdot)\|_{\infty}.
\end{equation*}
Since $\Delta t = 1/T$, the conclusion follows.
\end{proof}

We are now ready to derive an improved estimate of $C_2$ (in comparison with the one in \eqref{eq:constants}). We define the constant $E_2$ as follows:
\begin{equation*}
    E_2 =  \exp\left(\frac{d(L_{\ell}^c + 6 L^c)}{\alpha^c }\right)\|m^c_0\|_{\mathbb{L}^{\infty}}.
\end{equation*}
It is independent of $\Delta t$ and $h$.

\begin{lem}\label{lm:constant2}
The constants $C_1$, $C_2$, and $C_3$ defined in \eqref{eq:def_C_12}-\eqref{eq:energy2} satisfy the following inequalities:
\begin{equation}\label{eq:C2_2}
C_1  \leq  E_1 h^d , \qquad C_2 \leq E_2 h^d, \qquad
C_3  \leq E_3.
\end{equation}
As a consequence, for the constants defined in \eqref{eq:C}, we have
\begin{equation}\label{CC2}
    \begin{split}
         D_1 \leq E_1 L_f^c, \quad  D_2 \leq (1+L_f^c) \sqrt{\frac{2E_1 E_2 E_3 L_f^c}{\alpha^c}}, \quad  c \leq c_{\theta} \coloneqq  \max\left\{1 - \frac{\alpha^c}{4E_2E_3L_f^c} \, , \, \frac{1}{2}\right\}.
    \end{split}
\end{equation}
\end{lem}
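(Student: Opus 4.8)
The plan is to recycle two of the three estimates directly from Lemma \ref{lm:constant1} and to upgrade only the bound on $C_2$, which is the entire purpose of this lemma. The inequalities $C_1 \leq E_1 h^d$ and $C_3 \leq E_3$ were already obtained in Lemma \ref{lm:constant1} by applying the energy estimate of Lemma \ref{lm:energy1} (with $\delta_v = 0$ and $\mu_0 = m_0$ for $C_1$, and with $\mu_0 = 0$ and $\delta_v = (v_1 - v_2)m_1$ for $C_3$). Nothing in the present setting alters these two arguments, so they may be imported verbatim. Hence the only genuinely new work is the sharper $\ell^{\infty}$-estimate of $C_2$.

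First I would observe that, by its definition in \eqref{eq:def_C_12}, the constant $C_2 = \sup_{m} \|\fpt \circ \vt \circ \hjbt (m)\|_{\infty,\infty}$ is precisely the quantity controlled by the $\ell^{\infty}$-stability result of Lemma \ref{lm:maximum}. Applying that lemma yields $C_2 \leq \exp\!\big(d(L_{\ell}^c + 6L^c)/\alpha^c\big)\,\|m_0\|_{\infty}$. It then remains to convert $\|m_0\|_{\infty}$ into $\|m_0^c\|_{\mathbb{L}^\infty}$: since $m_0 = \mathcal{I}_h(m_0^c)$, i.e.\ $m_0(x) = \int_{B_h(x)} m_0^c(y)\,dy$ with $|B_h(x)| = h^d$, one has $\|m_0\|_{\infty} \leq h^d \|m_0^c\|_{\mathbb{L}^\infty}$. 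Combining the two estimates gives $C_2 \leq E_2 h^d$, which is the announced refinement of the crude bound $C_2 \leq E_1^{1/2} h^{d/2}$ of Lemma \ref{lm:constant1} (genuinely sharper as $h \to 0$, since $h^d \ll h^{d/2}$).

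Finally, the bounds \eqref{CC2} on $D_1$, $D_2$, and $c$ follow by inserting $C_1 \leq E_1 h^d$, $C_2 \leq E_2 h^d$, $C_3 \leq E_3$ into the definitions \eqref{eq:C}, together with the discretization identities $L_f = L_f^c h^{-d/2}$, $|S|^{1/2} = h^{-d/2}$ (recall $|S| = N^d = h^{-d}$), and $\alpha = \alpha^c$. The essential bookkeeping is that every power of $h$ cancels: in $D_1 = C_1 L_f |S|^{1/2}$ the factor $h^d$ from $C_1$ cancels $h^{-d/2}\cdot h^{-d/2}$; in $D_2 = (L_f + |S|^{1/2})\sqrt{2C_2 C_3/\alpha}$ the prefactor $h^{-d/2}$ cancels the $h^{d/2}$ arising from $\sqrt{C_2}$; and in the denominator $C_2 C_3 L_f |S|^{1/2}$ of $c$ the factor $h^d$ again cancels $h^{-d}$. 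This produces constants independent of $\Delta t$ and $h$; in particular the inequality $C_2 C_3 L_f |S|^{1/2} \leq E_2 E_3 L_f^c$ reverses into $\alpha/(4 C_2 C_3 L_f |S|^{1/2}) \geq \alpha^c/(4 E_2 E_3 L_f^c)$, which gives $c \leq c_\theta$ as stated.

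I expect no serious obstacle: all the analytic difficulty has been absorbed into Lemma \ref{lm:maximum} (the $\ell^{\infty}$-stability of the best-response map, itself resting on the semi-concavity of the value function and the separability hypothesis) and into the energy estimate of Lemma \ref{lm:constant1}. The only points demanding care are the exponent bookkeeping in the last step and the recognition that $C_2$ as defined in \eqref{eq:def_C_12} coincides exactly with the left-hand side of Lemma \ref{lm:maximum}.
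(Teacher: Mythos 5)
Your proof is correct and follows essentially the same route as the paper's, which likewise reuses the $C_1$ and $C_3$ estimates from Lemma \ref{lm:constant1} and obtains $C_2\leq E_2 h^d$ directly from Lemma \ref{lm:maximum} combined with $\|m_0\|_\infty=\|\mathcal{I}_h(m_0^c)\|_\infty\leq h^d\|m_0^c\|_{\mathbb{L}^\infty}$. Your explicit bookkeeping is in fact slightly sharper for $D_2$: the direct substitution yields $(1+L_f^c)\sqrt{2E_2E_3/\alpha^c}$, without the extra factor $\sqrt{E_1 L_f^c}$ appearing in \eqref{CC2}.
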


\begin{proof}
The estimates of $C_1$ and $C_3$ are the same as in \eqref{eq:constants}, and the estimate of $C_2$ is a direct consequence of Lemma \ref{lm:maximum} and the regularity of $m^c_0$. Then \eqref{CC2} is deduced from \eqref{eq:C2_2} and \eqref{eq:C}. 
\end{proof}

\begin{proof}[Proof of Theorem \ref{thm:main3}]
Inequality \eqref{eq:linear_rate_theta} holds true with $C_{\theta} = 4 E_1 L_f^c$, as a direct consequence of Proposition \ref{prop:convergence1}.
Inequality \eqref{eq:linear_stab_theta} is established in similar fashion to inequality \eqref{eq:sublin_distance}. It holds true with
\begin{equation*}
    \bar{C}_{\theta} = \max\left\{ 2E_1L_f^c (1+L_f^c) \sqrt{\frac{2 E_2 E_3}{\alpha^c}} \,  ,  \, C^{*} \right\}.
\end{equation*}
The theorem is proved.
\end{proof}

\subsection{Discussion on convergence constants} 

In this subsection, we study the dependence of the convergence constants $C_{\theta}$ and $c_{\theta}$ (appearing in Theorem \ref{thm:main3}) with respect to the viscosity parameter $\sigma$ and the Lipschitz constant $L_f^c$ of the coupling term $f^c$.
First, let us recall the constant $c(\sigma, \theta, M)$, introduced in \eqref{eq:c_energy},
\begin{equation*}
    c(\sigma,\theta,M) =  1+\frac{M^2}{\sigma(2\theta -1)} \exp\left( \frac{M^2}{\sigma(2\theta-1)} \right).
\end{equation*}
It is not difficult to see that $c(\sigma, \theta, M)$ decreases and converges to $1$ as $\sigma$ goes to $+\infty$. The constant $E_2$ is independent of $\sigma$ and $L_f^c$ by its definition (assuming that the change of $L_f^c$ has no impact on the semi-concavity constant of $f^c$).

By the proofs in the previous subsection, we can give the following explicit formulas of $C_{\theta}$ and $c_{\theta}$ in Theorem \ref{thm:main3} (without using $E_1$ and $E_3$):
\begin{equation}\label{eq:C_theta}
    C_{\theta}  = {} 4 c(\sigma,\theta,M)  L_f^c \left( \|m_0^c\|_{\mathbb{L}^{\infty}}^2 + \frac{1}{2}\|\nabla m_0^c\|^2_{\mathbb{L}^{\infty}}\right), \qquad  c_{\theta} = \max \left\{ 1 -  \frac{\sigma\alpha^c (2\theta -1)}{4 c(\sigma,\theta,M)L_f^c E_2}\, , \, \frac{1}{2} \right\}.
\end{equation}
 
\begin{lem} \label{lm:linear_conv}
For the constants in \eqref{eq:C_theta}, we have the following.
\begin{enumerate}
\item Fix $L_f^c$ and $\theta$. There exists $\sigma^{*} > 0$ and $C^{*}_1>0$, such that for any $\sigma \geq \sigma^{*}$, we have
\begin{equation*}
C_{\theta}  \leq C^{*}_1 \left( \|m_0^c\|_{\mathbb{L}^{\infty}}^2 + \frac{1}{2}\|\nabla m_0^c\|^2_{\mathbb{L}^{\infty}}\right),
\qquad  c_{\theta} = \frac{1}{2}.
\end{equation*}
\item Fix $\sigma$ and $\theta$. There exists $L^{*} > 0$ and $C^{*}_2 >0$, such that for any $L^c_f \leq L^{*}$, we have
\begin{equation*}
C_{\theta}  \leq  C^{*}_2\left( \|m_0^c\|_{\mathbb{L}^{\infty}}^2 + \frac{1}{2}\|\nabla m_0^c\|^2_{\mathbb{L}^{\infty}}\right),
\qquad  c_{\theta} = \frac{1}{2}. 
\end{equation*}
\end{enumerate}
\end{lem}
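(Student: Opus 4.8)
The plan is to read both assertions directly off the explicit formulas \eqref{eq:C_theta}, exploiting two elementary structural facts about the function $c(\sigma,\theta,M)$: it is decreasing in $\sigma$ with $c(\sigma,\theta,M)\to 1$ as $\sigma\to+\infty$ (as already noted in the text), and it is increasing in $M$, since $c=1+z e^{z}$ with $z=M^2/(\sigma(2\theta-1))$. In both items, the bound on $C_\theta$ reduces to controlling the prefactor $4\,c(\sigma,\theta,M)L_f^c$ by a constant that does not depend on the running parameter, after which the factor $\big(\|m_0^c\|_{\mathbb{L}^\infty}^2+\tfrac12\|\nabla m_0^c\|_{\mathbb{L}^\infty}^2\big)$ is merely carried along. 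Likewise, the conclusion $c_\theta=1/2$ is equivalent to the single inequality
\[
\frac{\sigma\alpha^c(2\theta-1)}{2\,c(\sigma,\theta,M)\,L_f^c\,E_2}\geq 1,
\]
which forces the first argument of the maximum defining $c_\theta$ to fall below $1/2$.

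For item (1), I would fix $L_f^c$ and $\theta$ and observe that $M$, defined in \eqref{eq:cons_M}, does not depend on $\sigma$, and that $E_2$ is also independent of $\sigma$. Since $c(\sigma,\theta,M)$ decreases to $1$, it is bounded on any interval $[\sigma^*,+\infty)$ by $c(\sigma^*,\theta,M)$, so the choice $C_1^*=4\,c(\sigma^*,\theta,M)L_f^c$ yields the stated bound on $C_\theta$ for all $\sigma\geq\sigma^*$. For the claim $c_\theta=1/2$, I would use that $\sigma/c(\sigma,\theta,M)\to+\infty$ as $\sigma\to+\infty$ (linear numerator, denominator tending to $1$); hence the displayed inequality holds once $\sigma$ exceeds a threshold, and enlarging $\sigma^*$ if necessary makes both conclusions hold simultaneously for every $\sigma\geq\sigma^*$.

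For item (2), I would fix $\sigma$ and $\theta$ and let $L_f^c$ vary. The one point requiring care is that $M$ itself depends on $L_f^c$ through \eqref{eq:cons_M}: there $M$ is continuous and increasing in $L_f^c$, so $c(\sigma,\theta,M)$ is increasing in $L_f^c$ and stays bounded as $L_f^c\to 0$, by $c(\sigma,\theta,M(L^*))$ on $[0,L^*]$. This gives $C_\theta\leq 4\,c(\sigma,\theta,M(L^*))\,L^*\big(\|m_0^c\|_{\mathbb{L}^\infty}^2+\tfrac12\|\nabla m_0^c\|_{\mathbb{L}^\infty}^2\big)$, i.e.\ $C_2^*=4\,c(\sigma,\theta,M(L^*))\,L^*$. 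For the identity $c_\theta=1/2$, I would rewrite the target inequality as $L_f^c\,c(\sigma,\theta,M)\leq \sigma\alpha^c(2\theta-1)/(2E_2)$; recalling that $E_2$ is independent of $L_f^c$, the left-hand side is a continuous function of $L_f^c$ that vanishes at $L_f^c=0$, so it remains below the fixed positive right-hand side for all $L_f^c$ in a sufficiently small interval $[0,L^*]$.

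The argument is thus elementary monotonicity and limit bookkeeping. The only genuine subtlety, and the step I would treat most carefully, is the implicit dependence of $M$—and hence of $c(\sigma,\theta,M)$—on $L_f^c$ in item (2). Once this dependence is recorded as continuous, increasing in $L_f^c$, and bounded near $L_f^c=0$, both the bound on $C_\theta$ and the identity $c_\theta=1/2$ follow by choosing $L^*$ small enough.
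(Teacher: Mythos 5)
Your proposal is correct and follows essentially the same route as the paper's proof: both rest on the monotonicity of $c(\sigma,\theta,M)$ in $\sigma$ and in $M$, the independence of $M$ from $\sigma$ in item (1), and the bounding of $M$ (hence of $c(\sigma,\theta,M)$) by its value at the upper endpoint of the allowed range of $L_f^c$ in item (2), after which $c(\sigma,\theta,M)L_f^c\to 0$ forces $c_\theta=1/2$. The only differences are cosmetic choices of the explicit constants $C_1^*$ and $C_2^*$.
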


\begin{proof}
Point (1) follows from the monotonicity of $c(\sigma, \theta, M)$ w.r.t.\@ $\sigma$ and the fact that $M$ is independent of $\sigma$. We can take $C_1^{*} = 5 L_f^c $ for example.  To prove (2), we first notice that if $L_c^c \leq 1$, then $M\leq M^{*}$, where $M^{*}$ is defined by \eqref{eq:cons_M}, replacing $L_f^c$ with $1$. The monotonicity of $c(\sigma, \theta, M)$ w.r.t.\@ $M$ shows that $c(\sigma, \theta, M) \leq c(\sigma, \theta, M^{*})$. Since $c(\sigma, \theta, M^{*})L_f^c$ goes to $0$ as $L_f^c$ goes to $0$, we prove the existence of $L^{*}$, and $C_2^{*} = 4 c(\sigma, \theta, M^{*}) L^{*}$.
\end{proof}

From the proof of Proposition \ref{prop:convergence1}, we know that $c_{\theta} = 1/2$ implies that $\lambda_k = 1$ for any $k\geq 0$. In other words, Algorithm \ref{alg2} is equivalent to the so-called best-response iteration, i.e., for any $k\geq 0$,
\begin{equation*}
(m^{k+1}, w^{k+1}) = \brt (m^k).
\end{equation*}
Combined with Lemma \ref{lm:linear_conv}, we have the following observations.
\begin{enumerate}
\item \textit{High-viscosity} case: let $L_f^c$ be fixed, if $\sigma$ is large enough, then the best response iteration has a linear convergence rate with a factor $1/2$.
\item \textit{Weak-coupling} case: let $\sigma$ be fixed,  if $L_f^c$ is small enough, then the best response iteration has a linear convergence rate with a factor $1/2$.
\end{enumerate}

\section{Numerical tests}

\subsection{Problem formulation}

In this section, we consider an example of \eqref{eq:mfg} in dimension one. We identify the torus with the segment $[0,1]$. The initial distribution is concentrated around the point $0.5$, the running cost is a quadratic function of the control, and the terminal cost $g(x)$ decreases to $0$ as $x$ goes to zero. Additionally, we consider a non-local congestion term which penalizes the density of the agents within the intervals $[0.2,0.3]$ and $[0.7,0.8]$. We will refer to $[0.2,0.3]\cup[0.7,0.8]$ as the
\emph{congestion-sensitive zone}.

To model this situation, let us introduce the functions $\varphi_{A,k} \in \mathcal{C}^{\infty}(\R)$ and $\phi_{A,k,l_1,l_2} \in \mathcal{C}^{\infty}(\R)$, parameterized by $ A>0$, $k>0$, $0<l_1 < l_2 <1$ and defined by
\begin{equation*}
    \varphi_{A,k} (x) =
    \begin{cases}
        \ A e^{-\frac{1}{1-(kx)^2}}, \qquad &\text{if } |x| < \frac{1}{k},\\
        \ 0 & \text{otherwise},
    \end{cases} \qquad  \phi_{A,k,l_1,l_2} (x)= 
    \begin{cases}
        \ \varphi_{A,k} (x-l_1) , \qquad &\text{if } x < l_1, \\
        \ Ae^{-1} &\text{if } l_1\leq x \leq l_2, \\
        \ \varphi_{A,k} (x-l_2), &\text{otherwise}.
    \end{cases}
\end{equation*}
The function $\varphi_{A,k}$ is a smooth approximation of the piecewise constant function equal to $Ae^{-1}$ on $[x-1/k,x+1/k]$ and zero elsewhere. The function $\phi_{A,k,l_1,l_2}$ is a smooth approximation of the piecewise constant function equal to $Ae^{-1}$ on $[l_1-1/k,l_2+1/k]$ and zero elsewhere.

The data of our one-dimensional MFG is 
parameterized by five positive numbers $a_1$, $a_2$, $k_0$, $k_1$, and $k_2$ and defined by: For any $(t,x)\in Q$, any $v\in \R$, and any $m\in \mathcal{D}(\mathbb{T})$,
\begin{itemize}
    \item $\ell^c(t,x,v) = \frac{1}{2} v^2$;
    \item $m^c_0(x) = \phi_{1,k_0, 0.49, 0.51}(x)/ \| \phi_{1,k_0, 0.49, 0.51}\|_{\mathbb{L}^1}$ ;
    \item $g^c (x)= \phi_{a_1,k_1,1/k_1, 1-1/k_1}(x)$;
    \item $f^c(t,x,m) = h^c(x) \int_{0}^1h^c(y)m(y) dy$, where
    $h^c(x) =  \phi_{a_2,k_2, 0.24, 0.25}(x) + \phi_{a_2,k_2, 0.75, 0.76}(x)$.
\end{itemize}
We take $a_1=2$, $a_2 = 20$, $ k_0 = 10$, $k_1 = 3$, and $k_2 = 20$. The functions $m_0^c$, $g^c$, and $h^c$ are shown in Figure \ref{fig_data}. Moreover, we fix the viscosity coefficient $\sigma = 0.02$.
In this context, the agents have their initial condition around 0.5. The terminal cost $g^c$ is an incentive to move to the point 0 (which is the same as the point 1, since we identify the torus with $[0,1]$). If there was no congestion cost and no diffusion coefficient, the agents would move at a constant speed because of the quadratic running cost. The congestion cost $f^c$ is an incentive to spend less time in the congestion-sensitive area.

\begin{figure}[htbp]
	\centering
	\includegraphics[width=0.6\linewidth]{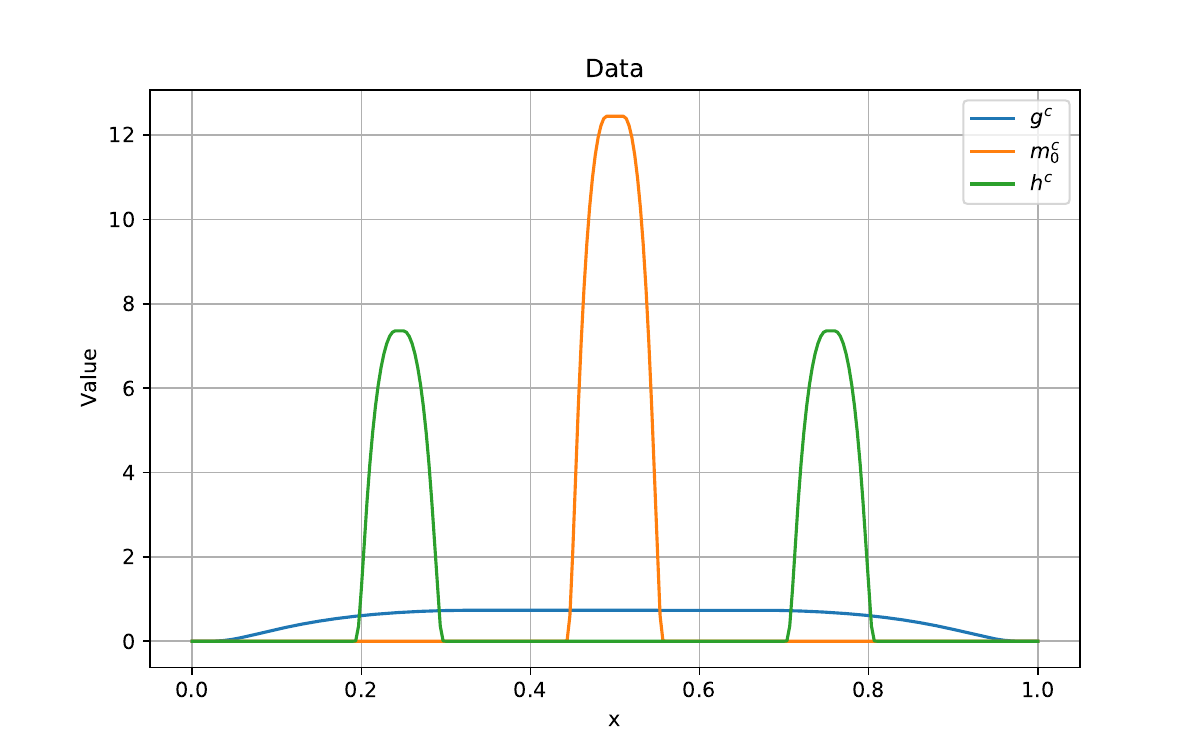}
	\caption{Data of the one-dimensional MFG with $a_1=2$, $a_2 = 20$, $ k_0 = 10$, $k_1 = 3$, and $k_2 = 20$.}
	\label{fig_data}
\end{figure}

We can verify that this one-dimensional MFG satisfies Assumptions \ref{ass:continuous}, \ref{ass:potential}, \ref{ass:semi-concave-l}, with the constants in Assumption \ref{ass:continuous} satisfying
\begin{equation*}
    \alpha^c =1,\quad L_{\ell}^c =0, \quad L_g^c \leq a_1k_1, \quad L_f^c \leq  \frac{a_2^2 k_2}{e}. 
\end{equation*}
Furthermore, \cite[Assumption C, Appx.\@ B]{BLP2022} holds true for this example, which implies Assumption \ref{ass:sol+} for any $r<1$ by \cite[Thm.\@ B.2]{BLP2022}. 

\subsection{Results}

For the discretization of the system, we first choose the parameters
\begin{equation*}
\theta = 0.8,   \quad h=1/300 ,\quad \text{and} \quad \Delta t = \frac{h^2}{2(1-\theta)\sigma} = 1/720,
\end{equation*}
and we present the outcome of Algorithm \eqref{eq:theta_mfg} after $1000$ iterations of the GFW algorithm, for step-sizes determined by line-search. For a better interpretation of the result, we also present the solution of the problem obtained by removing the congestion term $f^c$, which is a simple stochastic optimal control problem that can be solved in one iteration of the GFW method.

We present the equilibrium distribution of the agents in Figure \ref{fig_distribution} (without congestion term on the left, with congestion on the right). Note that the vertical axis corresponds to the time variable and is oriented downwards. We also present the restriction of the equilibrium distribution to the time interval $[0.2,0.8]$ in Figure \ref{fig_distribution_s}, with another color scale.
As the time progresses, the agents are transported towards the target points $0$ and $1$. The congestion term leads to a reduced density in the congestion-sensitive zone: We see two dark blue vertical areas corresponding to this zone.
We also see that at time $t \approx 0.35$, a significant part of the agents is still located around $0.5$ and has not crossed yet the sensitive zone, in comparison with the case without $f^c$.
Similarly, we present the optimal control $v$ in Figure \ref{fig_oc_u} (without congestion term on the left, with congestion term on the right). Unsurprisingly, the agents must have a high velocity (in absolute value) in the sensitive zone. It is interesting to see that for $t$ close to zero and for the agents not that close to $0.5$, there is an incentive to ``rush" to the sensitive zone. Finally, we display the value functions for the two problems in Figure \ref{fig_oc_u}. In the present setting, note that the optimal control is the discrete gradient of the value function.



\begin{figure}[htbp]
    \centering
    \begin{subfigure}{1\textwidth}
        \centering
        \includegraphics[width=\linewidth]{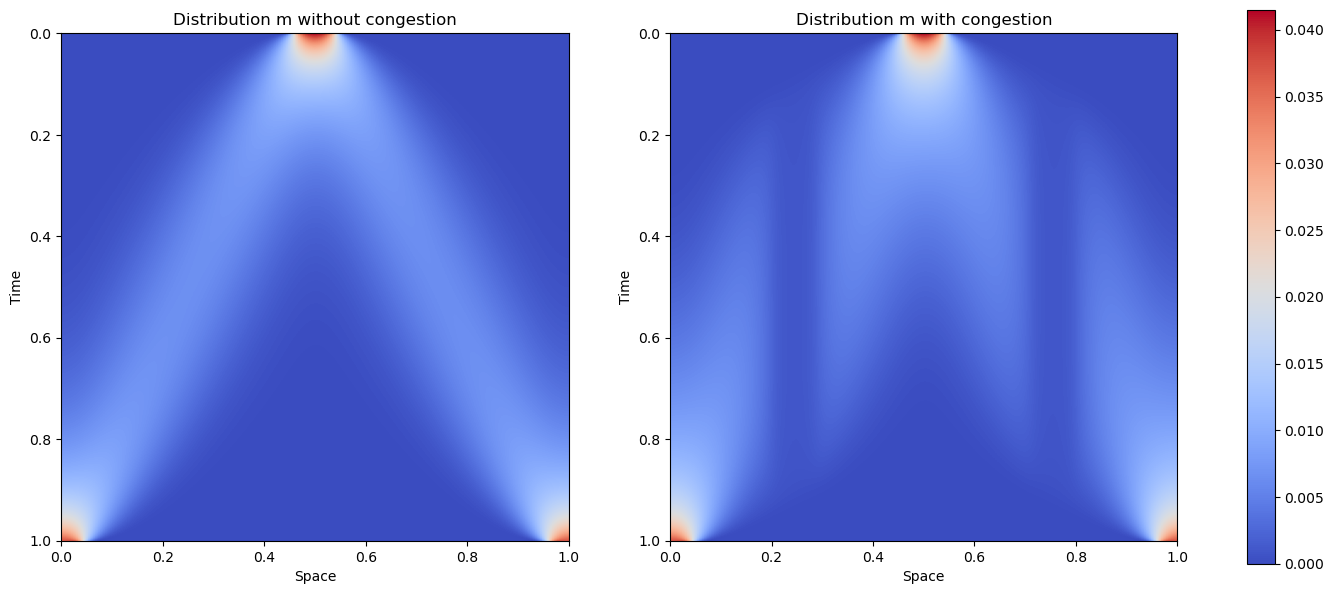}
        \caption{Comparison of distributions in the time horizon $[0,1]$: the case without $f^c$ (left), the case with $f^c$ (right).}
        \label{fig_distribution}
    \end{subfigure}
    \hfill

\vspace{0cm}
    
    \begin{subfigure}{1\textwidth}
        \centering
        \includegraphics[width=\linewidth]{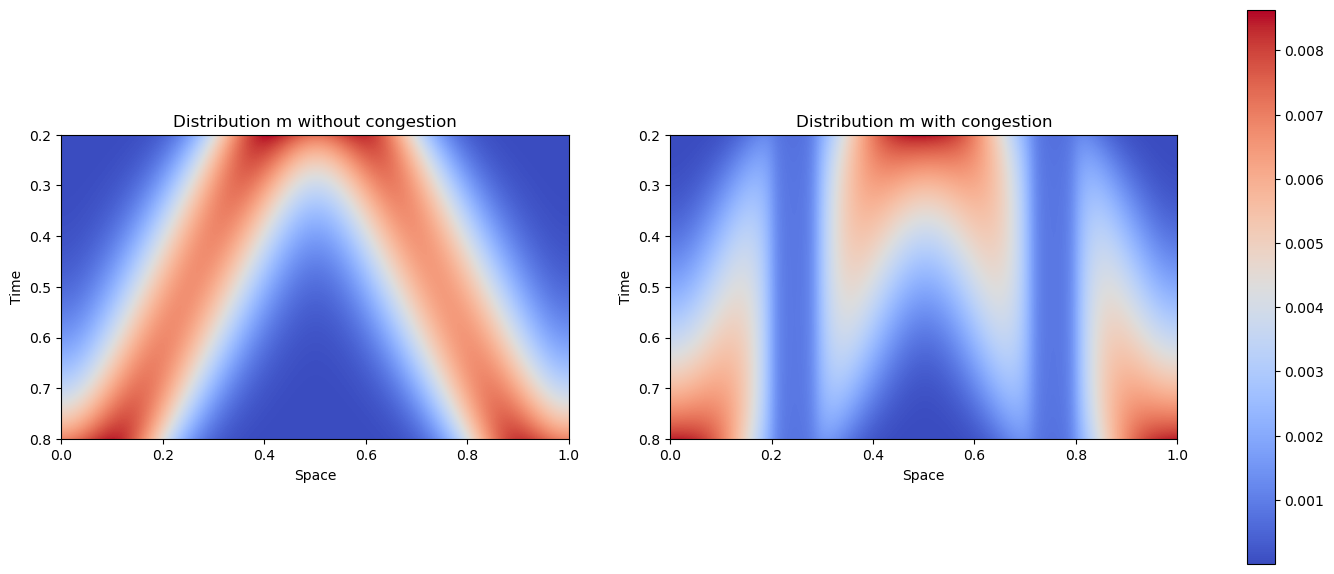}
        \caption{Comparison of distributions in the time horizon $[0.2,0.8]$: the case without $f^c$ (left), the case with $f^c$ (right).}
        \label{fig_distribution_s}
    \end{subfigure}
    \medskip
    \caption{Distributions}
    \label{fig_m}
\end{figure}

\begin{figure}[htbp]
    \centering
    \begin{subfigure}{1\textwidth}
        \centering
        \includegraphics[width=\linewidth]{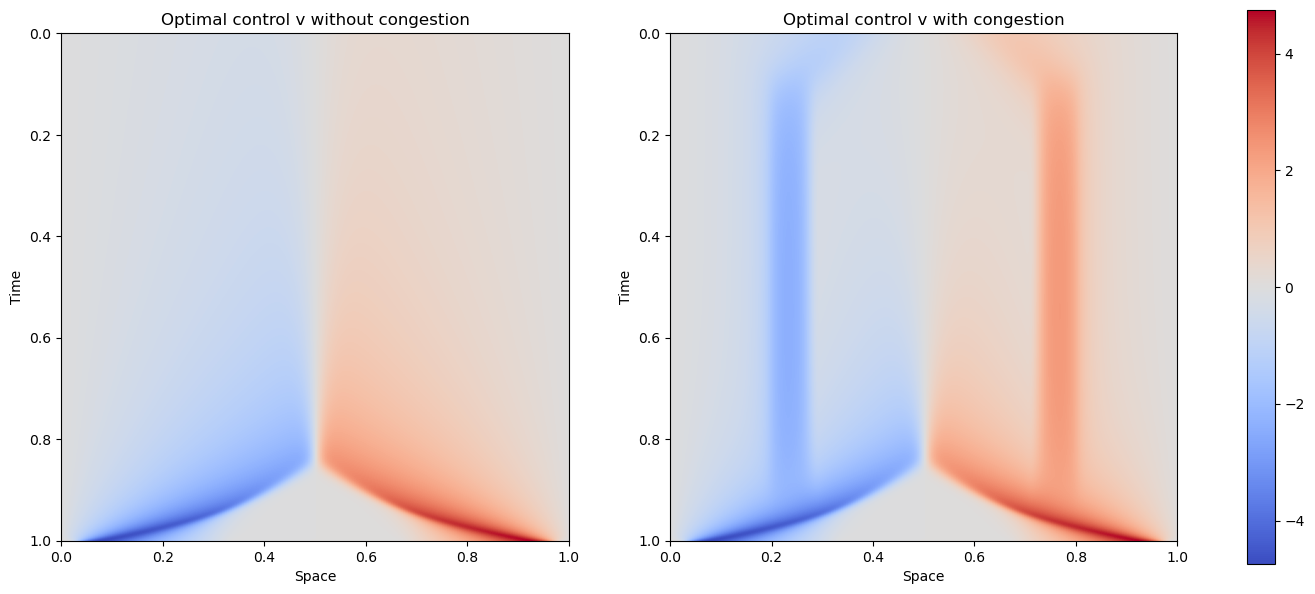}
        \caption{Comparison of optimal controls:  the case without $f^c$ (left), the case with $f^c$ (right).}
        \label{fig_control}
    \end{subfigure}
    \hfill

\vspace{0.5cm}
    
    \begin{subfigure}{1\textwidth}
        \centering
        \includegraphics[width=\linewidth]{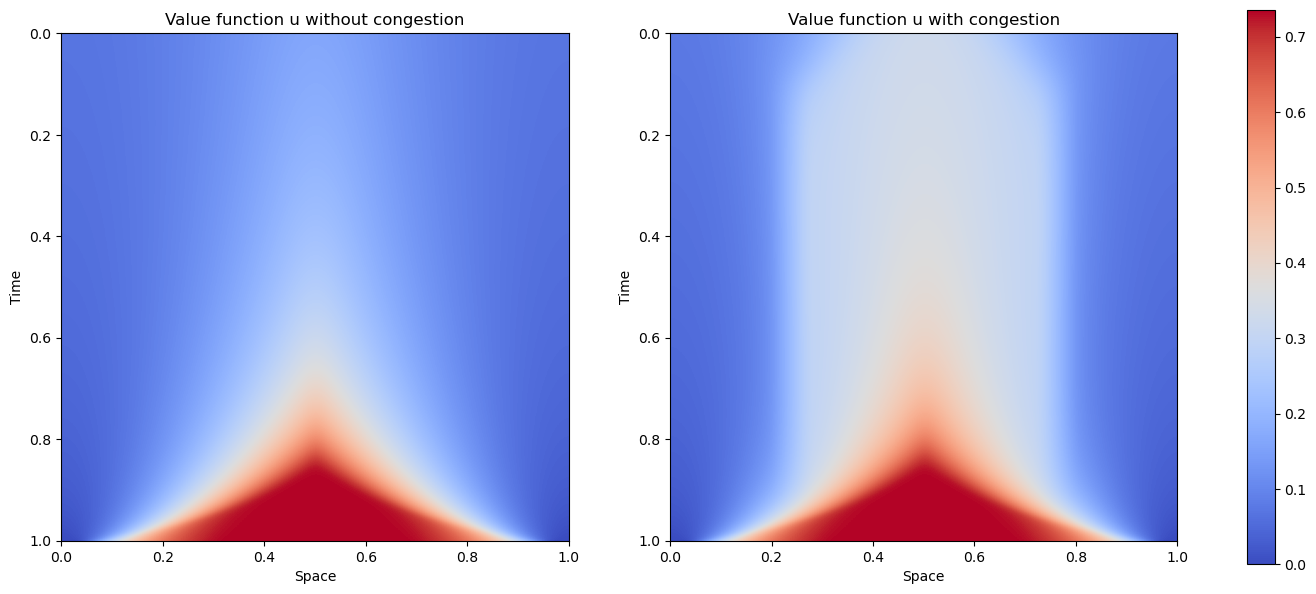}
        \caption{Comparison of value functions:  the case without $f^c$ (left), the case with $f^c$ (right).}
        \label{fig_value_function}
    \end{subfigure}
    \medskip
    \caption{Optimal controls and value functions}
    \label{fig_oc_u}
\end{figure}

We next investigate the convergence of Algorithm \ref{alg2} (for the same discretization parameters as above). We execute Algorithm \ref{alg2} with $1000$ iterations, utilizing the open-loop choice $\lambda_k = 2/(k+2)$ and the closed-loop choice \eqref{eq:lambda} (referred to as the line-search method).
We present the convergence results in Figure \ref{fig_conv}. Evaluating $\gamma_k$, equal to $\mathcal{J}(m_k,w_k) - \mathcal{J}(\bar{m},\bar{w})$ by definition, is difficult since the exact solution $(\bar{m},\bar{w})$ is not known.
On the other hand, the quantity $\bar{\gamma}_k$, which serves as an upper bound of $\gamma_k$ by  \eqref{eq:classical_bound} can directly computed in view of its definition, based on $(m_k,w_k)$ and $(\bar{m}_k,\bar{w}_k)$. Therefore, instead of evaluating $\gamma_k$, we display the evolution of $\bar{\gamma}_k$, see Figure \ref{fig_conv}. The two figures of Figure \ref{fig_conv} are the same, with different scales for the horizontal axis.
In the left part of Figure \ref{fig_conv}, we see that Algorithm \ref{alg2} exhibits a convergence rate of order $\mathcal{O}(1/k^4)$ for the choice $\lambda_k = 2/(k+2)$, which is better than the theoretical convergence rate $\mathcal{O}(1/k)$ obtained from \eqref{eq:conv1_2}. In the right part of Figure \ref{fig_conv}, a linear convergence rate can be observed for the line-search case, as predicted in \eqref{eq:conv2_2}.

Finally, we present numerical results concerning the mesh-independence of Algorithm \ref{alg2} applied to \eqref{eq:theta_mfg}. To see this, we discretize the state space with steps sizes: $h=1/250$, $h=1/500$, and $h=1/1000$. 
The corresponding step sizes for the time space are: $\Delta t = 1/500$, $\Delta t = 1/2000$, and $\Delta t = 1/8000$.
The convergence results associated with these discretization steps are displayed in Figure \ref{fig_mesh_ind}. From the left part of Figure \ref{fig_mesh_ind},  it can be observed that the convergence rate of Algorithm \ref{alg2} remains unaffected by the choice of $h$ when $\lambda_k=2/(k+2)$.
The right part of Figure \ref{fig_mesh_ind} shows that the convergence rate of Algorithm \ref{alg2} can even benefit from a refinement of the discretization parameters in the line-search case. These results are consistent with mesh-independence properties outlined in Theorems \ref{thm:main2} and \ref{thm:main3}.

\begin{figure}[htbp]
	\centering
	\includegraphics[width=1\linewidth]{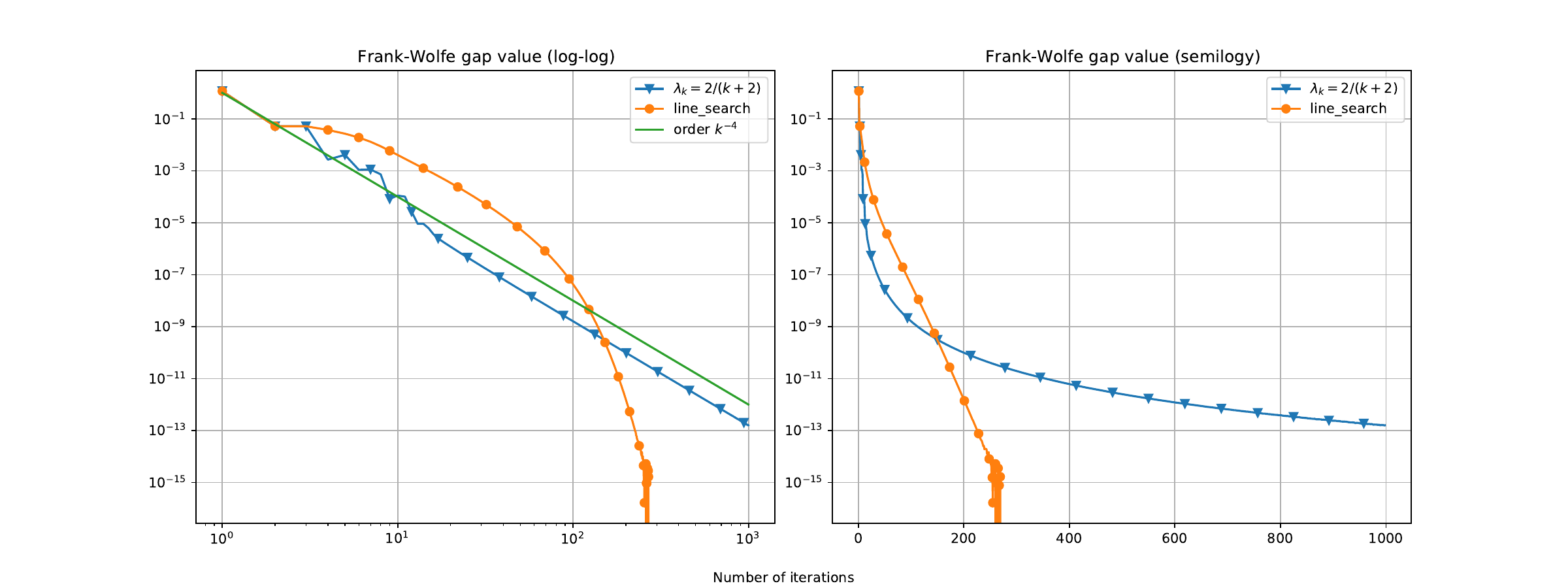}
	\caption{Convergence results of Algorithm \ref{alg2}.}
	\label{fig_conv}
\end{figure}

\begin{figure}[htbp]
	\centering
	\includegraphics[width=1\linewidth]{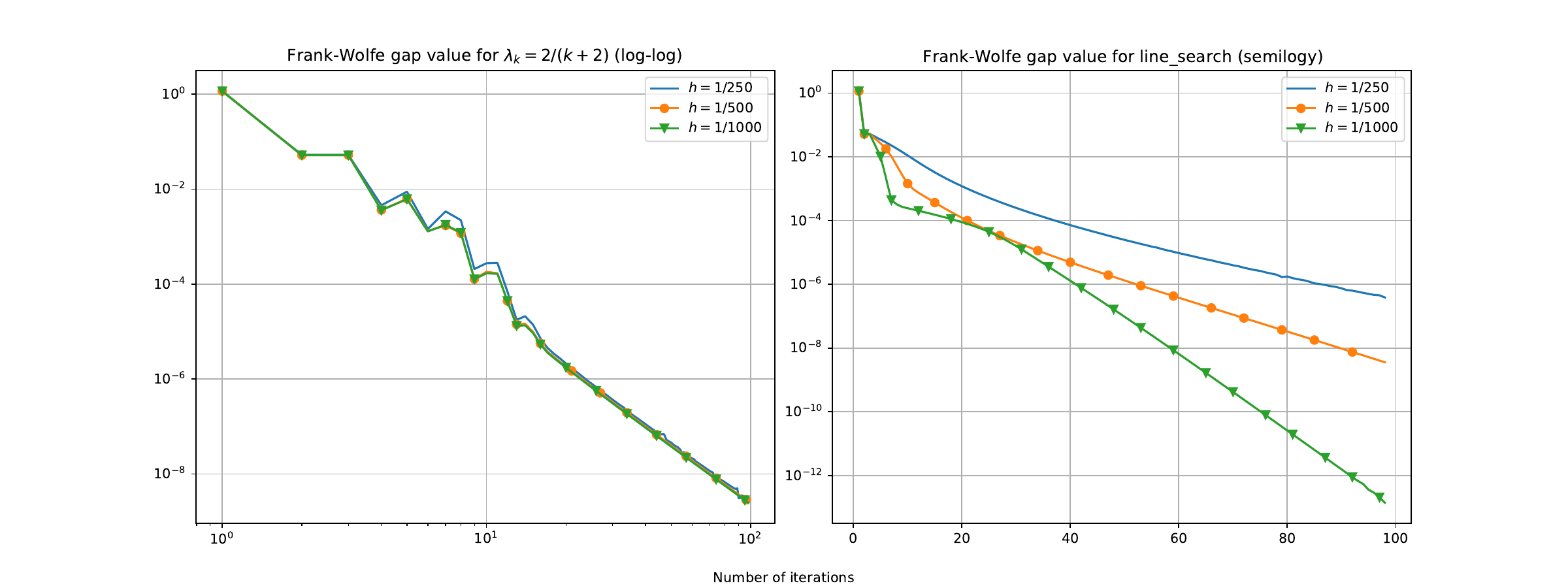}
	\caption{Mesh-independence property of Algorithm \ref{alg2}.}
	\label{fig_mesh_ind}
\end{figure}

\section{Conclusion}

We have established mesh-independent convergence results for the resolution of potential MFGs with the generalized Frank-Wolfe algorithm. This robustness property makes the GFW algorithm a method of choice for the resolution of potential MFGs. Our analysis has benefited from the intrinsic simplicity of the convergence proof of the GFW algorithm, which at the discrete level only required us to prove natural $L^2$ energy estimates for the Fokker-Planck equation (in the sublinear case) and an estimate of the semi-concavity modulus of the HJB equation (in the linear case). We expect that our analysis can be extended to the combination of the GFW algorithm with other discretization schemes, such as the implicit scheme of \cite{achdou2013mean}. Let us stress, however, that the implementation of the GFW algorithm is made difficult for such schemes, since they would imply to solve fully implicit discrete HJB equations, involving nonlinear implicit relations, while for the theta-scheme, the implicit relations are linear, thus much easier to handle.

Let us underline again that the application of the GFW algorithm requires us to interpret the MFG system as the first-order necessary optimality condition of an optimization problem, it is therefore restricted to the case of potential problems. The convexity of the potential problem is crucial in the current analysis, however, we mention that the Frank-Wolfe algorithm was extended to non-convex problems (see \cite{lacoste2016}). In a non-convex setting, the convergence to a global solution cannot be ensured, yet the convergence of some stationarity criterion can be demonstrated. Future work will aim at proving mesh-independent principles for those convergence results, in the context of non-convex mean-field-type optimal control problems.

Another line of research could focus on the extension of the current framework to the case of degenerate potential mean-field games. It would be natural to investigate the combination of the Frank-Wolfe algorithm with semi-Lagrangian schemes, which can handle models with a possibly degenerate diffusion, see \cite{gianatti2023approximation} and the references therein.

Finally, we mention the case of first-order MFGs in their Lagrangian formulation, in which the equilibrium configuration is described by a probability measure on some trajectory set. It has a prescribed marginal $m_0$, describing the initial conditions of the agents. In our preprint \cite{liu2023MFO}, we propose a general class of optimization problems, containing the potential Lagrangian MFGs. We propose a tractable variant of the Frank-Wolfe algorithm, based on our work \cite{bonnans2022large}, that is combined with a discretization of $m_0$ as an empirical distribution associated with a set of $N$ points. Our method exhibits a sublinear rate of convergence which can be qualified as mesh-independent, since it improves as $N$ increases.

\end{document}